\def\ra{\rightarrow}
\newtheorem{theorem}{THEOREM}[section]
\newtheorem{corollary}[theorem]{Corollary}
\newtheorem{proposition}[theorem]{Proposition}
\newtheorem{lemma}[theorem]{Lemma}
\newtheorem{conjecture}[theorem]{Conjecture}
\theoremstyle{definition}
\theoremstyle{remark}
\newtheorem{remark}[theorem]{Remark}
\newcommand\C{{\mathbb C}}
\newcommand\CC{{\mathbb C}}
\newcommand\RR{{\mathbb R}}
\newcommand\N{{\mathbb N}}
\newcommand\g{\mathfrak{g}}
\newcommand\h{\mathfrak{h}}
\newcommand\hol{\mathfrak{hol}}
\renewcommand\H{\mathbb{H}}
\newcommand\op[1]{\mathop{\rm #1}\nolimits}
\newcommand\pp{\mathfrak{p}}
\newcommand\R{{\mathbb R}}
\newcommand\Z{{\mathbb Z}}
\newcommand\ZZ{{\mathbb Z}}
\def\SL{\mathop{\rm SL}\nolimits}
\def\CO{\mathop{\rm CO}\nolimits}
\def\SO{\mathop{\rm SO}\nolimits}
\def\SU{\mathop{\rm SU}\nolimits}
\def\U{\mathop{\rm U}\nolimits}
\def\rank{\mathop{\rm rank}\nolimits}
\def\vf{\mathop{\rm vf}\nolimits}
\def\Re{\mathop{\rm Re}\nolimits}
\def\Im{\mathop{\rm Im}\nolimits}
\def\blfootnote{\xdef\@thefnmark{}\@footnotetext}
\begin{document}

\title[On the symmetry algebras of 5-dimensional CR-manifolds]{On the symmetry algebras of
\vspace{0.1cm}\\
5-dimensional CR-manifolds}\blfootnote{{\bf Mathematics Subject Classification:} 32C05, 32V40.}\blfootnote{{\bf Keywords:} real hypersurfaces in complex space, Lie algebras of infinitesimal CR-automorphisms, gap phenomenon.}
\author[Isaev]{Alexander Isaev}
\author[Kruglikov]{Boris Kruglikov}

\address[Isaev]{Mathematical Sciences Institute\\
Australian National University\\
Acton, ACT 2601, Australia}
\email{alexander.isaev@anu.edu.au}

\address[Kruglikov]{Department of Mathematics and Statistics\\
University of Troms\o{}\\
Troms\o{} 90-37, Norway}
\email{boris.kruglikov@uit.no}

\maketitle

\thispagestyle{empty}

\pagestyle{myheadings}

\begin{abstract}
We show that for a real-analytic connected holomorphically nondegenerate 5-dimensional CR-hypersurface $M$ and its symmetry algebra $\mathfrak{s}$ one has either: (i) $\dim\mathfrak{s}=15$ and $M$ is spherical (with Levi form of signature either $(2,0)$ or $(1,1)$ everywhere), or (ii) $\dim\mathfrak{s}\le11$ where $\dim\mathfrak{s}=11$ can only occur if on a dense open subset $M$ is spherical with Levi form of signature $(1,1)$. Furthermore, we construct a series of examples of pairwise nonequivalent CR-hypersurfaces with $\dim\mathfrak{s}=11$.
\end{abstract}

\section{Introduction}\label{intro}
\setcounter{equation}{0}

A classical problem in geometry is the investigation of automorphism groups and, at the infinitesimal level, of symmetry algebras for classes of manifolds endowed with geometric structures of fixed type. Given such a class $\mathcal{C}$ of manifolds, the {\it symmetry algebra of $M\in{\mathcal C}$}\, is the Lie algebra ${\mathfrak s}(M)$ of vector fields on $M$ whose local flows preserve the structure, and its dimension $\dim{\mathfrak s}(M)$ is called the {\it symmetry dimension of $M$}. In particular, an important question is to determine the maximal value $D_{\hbox{\tiny\rm max}}$ of the symmetry dimension over all $M\in\mathcal{C}$ as well as its possible values close to $D_{\hbox{\tiny\rm max}}$. In many situations this problem has been extensively studied and the maximally symmetric $M\in\mathcal{C}$ (i.e., those with $\dim{\mathfrak s}(M)=D_{\hbox{\tiny\rm max}}$) have been explicitly classified.

While describing large symmetry dimensions, one often encounters a {\it gap phenomenon}, that is, the nonrealizability of some of the values immediately below $D_{\hbox{\tiny\rm max}}$ as $\dim{\mathfrak s}(M)$ for any $M\in{\mathcal C}$. One then searches for the next realizable value, the {\it submaximal dimension $D_{\hbox{\tiny\rm smax}}$}, thus obtaining the interval $(D_{\hbox{\tiny\rm smax}}, D_{\hbox{\tiny\rm max}})$ called the {\it first gap}, or {\it lacuna}, for the symmetry dimension. The lacunary behavior of $\dim{\mathfrak s}(M)$ may extend further, and, ideally, one would like to determine all such lacunas as well as to characterize---to the greatest extent possible---the manifolds with sufficiently large nonlacunary values of $\dim{\mathfrak s}(M)$.

The best-known case for which the above program has been implemented with much success, both in the global and infinitesimal settings, is Riemannian geometry where ${\mathcal C}$ is the class of all smooth\footnote{In this paper smoothness is always understood as $C^{\infty}$-smoothness.} connected Riemannian manifolds of dimension $n\ge 2$. In this situation, $\mathfrak{s}(M)$ is the Lie algebra of all smooth vector fields on $M$ whose flows consist of local isometries,  $D_{\hbox{\tiny\rm max}}=n(n+1)/2$, and the manifolds $M$ satisfying $\dim{\mathfrak s}(M)=D_{\hbox{\tiny\rm max}}$ are the spaces of constant curvature. Furthermore, a number of lacunas for the symmetry dimension have been identified, and the manifolds with sufficiently high nonlacunary values of $\dim{\mathfrak s}(M)$ have been shown to admit reasonable descriptions (see, e.g., \cite{F}, \cite{Eg1}, \cite[p.~ 219]{Eg2}, \cite[Section 3]{I2}, \cite[Chapter 2]{Ko}, \cite{KN}, \cite{W}). For other geometric structures, results of this kind can be found, e.g., in \cite{I2}, \cite{Ko}, \cite{Kr}, \cite{KT}, \cite{Ma}, \cite{Tr}.

In this paper we turn to CR-geometry, in which case much less is known about the behavior of the symmetry dimension. Recall that an {\it almost CR-structure}\, on a smooth manifold $M$ is a subbundle $H(M)\subset T(M)$ of the tangent bundle of even rank, called the CR-subbundle, or CR-distribution, endowed with operators of complex structure $J_p:H_p(M)\ra H_p(M)$, $J_p^2= -\hbox{id}$, that smoothly depend on $p\in M$. A manifold equipped with an almost CR-structure is called an {\it almost CR-manifold}. The number $\rank(H(M))/2$ is denoted by $\hbox{CRdim}\, M$ and called the CR-dimension of  $M$. The complementary dimension $\dim M-2\hbox{CRdim}\, M$ is called the CR-codimension of $M$. 

Next, for every $p\in M$ we have
$
H_p(M)\otimes\C=H_p^{(1,0)}(M)\oplus H_p^{(0,1)}(M),
$
where
$$
\begin{array}{l}
H_p^{(1,0)}(M):=\{X-iJ_pX\mid X\in H_p(M)\},\\
\vspace{-0.3cm}\\
H_p^{(0,1)}(M):=\{X+iJ_pX\mid X\in H_p(M)\}.
\end{array}
$$
Then the almost CR-structure on $M$ is said to be integrable if the bundle $H^{(1,0)}(M)$ is involutive, i.e., for any pair of local
sections ${\mathfrak z},{\mathfrak z}'$ of $H^{(1,0)}(M)$ the commutator $[{\mathfrak z},{\mathfrak z}']$
is also a local section of $H^{(1,0)}(M)$. An integrable almost CR-structure is called a {\it CR-structure}\, and a manifold equipped with a CR-structure a {\it CR-manifold}. In this paper we consider only {\it CR-hypersurfaces}, i.e., CR-manifolds of CR-codimension 1.

If $M$ is a real hypersurface in a complex manifold ${\mathcal M}$ with operators of almost complex structure ${\mathcal J}_q$, $q\in {\mathcal M}$,  it is naturally an almost CR-manifold with $H_p(M):=T_p(M)\cap {\mathcal J}_p(T_p(M))$ and $J_p$ being the restriction of ${\mathcal J}_p$ to $H_p(M)$ for every $p\in M$. Moreover, the almost complex structure so defined is integrable, thus $M$ is in fact a CR-hypersurface of CR-dimension $\dim_{\C}{\mathcal M}-1$.

Further, the {\it Levi form}\, of a CR-hypersurface $M$ comes from taking commutators of local sections of
$H^{(1,0)}(M)$ and $H^{(0,1)}(M)$. Let $p\in M$, $\zeta,\zeta'\in
H_p^{(1,0)}(M)$. Choose  local sections ${\mathfrak z}$, ${\mathfrak z}'$ of $H^{(1,0)}(M)$ near
$p$ such that ${\mathfrak z}(p)=\zeta$, ${\mathfrak z}'(p)=\zeta'$. The Levi form of $M$ at
$p$ is then the Hermitian form on $H_p^{(1,0)}(M)$ with values in the space $(T_p(M)/H_p(M))\otimes\CC$ given by
${\mathcal L}_M(p)(\zeta,\zeta'):=i[{\mathfrak z},\overline{{\mathfrak z}'}](p)\,(\hbox{mod}\, H_p(M)\otimes\C)$. For fixed $\zeta$ and $\zeta'$ the right-hand side of this formula is independent of the choice of ${\mathfrak z}$ and ${\mathfrak z}'$, and, identifying $T_p(M)/H_p(M)$ with $\R$, one obtains a $\C$-valued Hermitian form defined up to a real scalar multiple.

As shown in classical work \cite{C}, \cite{CM}, \cite{Ta1}--\cite{Ta3}, \cite{Tr} (see also \cite{BS}), the dimension of the symmetry algebra $\mathfrak{s}(M)$ of a {\it Levi-nondegenerate}\, connected CR-hypersurface $M$ of CR-dimension $n$ does not exceed $n^2+4n+3$. Furthermore, $\dim\mathfrak{s}(M)=n^2+4n+3$ implies that $M$ is {\it spherical}, i.e., that near its every point $M$ is CR-equivalent to an open subset of the quadric
\begin{equation}
Q_{K}:=\Bigl\{(z_1,\dots,z_{n+1})\in\CC^{n+1}:  \Re(z_{n+1})=\sum_{j=1}^{K}(\Re(z_j))^2-\sum_{j=K+1}^{n}(\Re(z_j))^2\Bigr\}\label{QK}
\end{equation}
for some $n/2\le K\le n$. The Levi form of $Q_{K}$ has signature $(K, n-K)$ everywhere and $\dim\mathfrak{s}(Q_K)=n^2+4n+3$ for all $K$. Thus, for the class of Levi-nondegenerate connected CR-hypersurfaces of CR-dimension $n$ one has $D_{\hbox{\tiny\rm max}}=n^2+4n+3$. Despite the above result being classical, the submaximal value $D_{\hbox{\tiny\rm smax}}$ in the Levi-nondegenerate setting has only been recently computed. Namely, in \cite{Kr} it was shown that $D_{\hbox{\tiny\rm smax}}=n^2+3$ in the strongly pseudoconvex (Levi-definite) case and $D_{\hbox{\tiny\rm smax}}=n^2+4$ in the Levi-indefinite case. It is instructive to compare this result with known bounds on the dimension of the stability group (see \cite{EI} and references therein for details).

It should be noted that the geometry of Levi-nondegenerate CR-hypersurfaces is a particular instance of parabolic geometry and that the classical results stated above can be derived within the parabolic geometry framework (see, e.g., \cite{CSc}). Likewise, the argument of \cite{Kr} is based on article \cite{KT} where gap phenomena for general parabolic geometries were studied. For an extensive introduction to parabolic geometry we refer the reader to monograph \cite{CSl}.

In the absence of Levi-nondegeneracy, finding the maximal and submaximal dimensions of the symmetry algebra is much harder. To simplify the setup, in this case one usually switches to the real-analytic category by assuming the manifolds and the vector fields forming the symmetry algebra to be real-analytic rather than just smooth. In order to guarantee the finite-dimensionality of $\mathfrak{s}(M)$ it then suffices to require that $M$ be {\it holomorphically nondegenerate}\, (see \cite[\S 11.3, \S12.5]{BER}, \cite{Eb}, \cite{S}). Regarding the maximal possible value for $\dim\mathfrak{s}(M)$ in this situation, we mention the following variant of a conjecture due to V.~Beloshapka (cf.~ \cite[p.~38]{B}):    

\begin{conjecture}\label{beloshapka} For any real-analytic connected holomorphically nondegenerate CR-hypersurface $M$ of CR-dimension $n$ one has $\dim\mathfrak{s}(M)\le n^2+4n+3$, with the maximal value $n^2+4n+3$ attained only if on a dense open set $M$ is spherical.
\end{conjecture}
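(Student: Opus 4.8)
I would prove Conjecture~\ref{beloshapka} in the CR-dimension $n=2$ case treated in this paper (so $n^2+4n+3=15$), and in fact pin down the entire list of attained values of $\dim\s(M)$. The main tool is a \emph{localization principle}: since $M$ is connected and real-analytic and the symmetry fields are real-analytic, the restriction map $\s(M)\to\s(V)$ is injective for every nonempty open $V\subseteq M$, so it suffices to bound $\dim\s(V)$ for cleverly chosen small pieces $V$. Two remarks organize the case analysis. The Levi-degenerate locus of $M$ is a real-analytic subset, hence by holomorphic nondegeneracy it is nowhere dense \emph{unless} $M$ is Levi-degenerate everywhere, in which case (holomorphic nondegeneracy again, ruling out Levi-flat pieces) the Levi rank equals $1$ on a dense open set; and on a connected open set the Levi signature is constant.

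\textbf{The easy strata.} If $M$ is nowhere Levi-nondegenerate, then on a dense open set it is uniformly $2$-nondegenerate, and for such $5$-dimensional hypersurfaces one has the sharp bound $\dim\s\le10$ (attained only by the tube over the future light cone); hence $\dim\s(M)\le10<11$. If the Levi-nondegenerate locus $U$ is nonempty --- hence dense --- but $M$ is not spherical on all of $U$ (equivalently, the Chern--Moser curvature is nonzero somewhere on $U$), then picking a small ball $V\subseteq U$ on which the curvature is not identically zero makes $V$ a non-spherical Levi-nondegenerate CR-hypersurface, and Kruglikov's submaximal bound \cite{Kr} ($n^2+3=7$ in the strongly pseudoconvex case, $n^2+4=8$ in the Levi-indefinite case) gives $\dim\s(M)\le\dim\s(V)\le8<11$.

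\textbf{The spherical case.} The only remaining possibility is that $U$ is dense and the Chern--Moser curvature vanishes identically on $U$, i.e.\ $M$ is spherical on $U$. Fix a connected component $W$ of $U$; it has constant signature $(2,0)$ or $(1,1)$, and since the symmetry algebra of a connected spherical CR-hypersurface embeds into that of the corresponding flat quadric we obtain $\s(M)\hookrightarrow\s(W)\hookrightarrow\g$, with $\g=\mathfrak{su}(3,1)$ for signature $(2,0)$ and $\g=\mathfrak{su}(2,2)$ for signature $(1,1)$, both of dimension $15$. The decisive Lie-theoretic input is the description of the large subalgebras of these algebras: every proper subalgebra of $\mathfrak{su}(3,1)$ has dimension $\le10$ (its smallest homogeneous space has dimension $5$), while every proper subalgebra of $\mathfrak{su}(2,2)$ has dimension $\le11$, the dimension-$11$ one being, up to conjugacy, a maximal parabolic $\pp$ (stabilizer of a point in a $4$-dimensional flag manifold). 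Since $\mathfrak{su}(3,1)$ does not embed into $\mathfrak{su}(2,2)$, if $U$ has components of both signatures then $\dim\s(M)\le10$; otherwise $\dim\s(M)\le10$ or $\dim\s(M)=15$ when the signature is $(2,0)$, and $\dim\s(M)\le11$ or $\dim\s(M)=15$ when it is $(1,1)$. If $\dim\s(M)=15$, then $\s(M)$ acts on the $5$-dimensional $M$ with open orbits, so $M$ is a single orbit, a homogeneous model, hence spherical --- with Levi form of signature $(2,0)$ or $(1,1)$ --- everywhere: this is alternative (i). If $\dim\s(M)=11$, the preceding dichotomies (the value $11$ already excludes the two easy strata) force $M$ to be spherical of signature $(1,1)$ on all of the dense open set $U$, while $12,13,14$ are excluded altogether. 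This establishes the dichotomy and the first gap $(11,15)$, and in particular Conjecture~\ref{beloshapka} for $n=2$. To see that $11$ is attained one realizes $\pp\subset\mathfrak{su}(2,2)$ as an actual symmetry algebra: $\pp$ acts locally transitively on the signature-$(1,1)$ hyperquadric, and suitable open orbits --- together with quotients by discrete subgroups --- form a series of pairwise nonequivalent spherical CR-hypersurfaces with $\dim\s=11$.

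\textbf{Main obstacle.} The delicate part is the spherical case. One must (a) glue together the possibly disconnected and differently-signed spherical and non-spherical strata of $M$, repeatedly invoking the localization principle; (b) establish the precise classification of subalgebras of $\mathfrak{su}(3,1)$ and $\mathfrak{su}(2,2)$ of codimension $\le4$ and, more importantly, decide which of them are genuinely realized as symmetry algebras of open pieces of the flat quadrics --- this is what fixes the submaximal value at $11$, excludes $12,13,14$, and feeds the construction of the examples; and (c) for the uniformly $2$-nondegenerate stratum, secure the sharp bound $\dim\s\le10$, by citation or by Tanaka-prolongation estimates. Steps (b) and (c) carry essentially all the technical weight.
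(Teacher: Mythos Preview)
Your overall architecture matches the paper's proof of Theorem~\ref{main1} almost exactly: localize, split into the uniformly $2$-nondegenerate stratum (bound $10$ via \cite{IZ}), the Levi-nondegenerate nonspherical stratum (bound $8$ via \cite{Kr}), and the spherical stratum, then invoke the classification of large subalgebras of $\mathfrak{su}(1,3)$ and $\mathfrak{su}(2,2)$ (the paper's Propositions~\ref{su13} and~\ref{su22}). So the framework is right.

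There is, however, a genuine gap at the step ``If $\dim\s(M)=15$, then $\s(M)$ acts on $M$ with open orbits, so $M$ is a single orbit, hence spherical everywhere.'' This is precisely the nontrivial part of the paper's argument, and it is \emph{not} automatic. When $\s\cong\mathfrak{su}(2,2)$, the subalgebra bound you quote allows an $11$-dimensional isotropy subalgebra (conjugate to $\pp_2$), hence a $4$-dimensional local orbit; there can also be a fixed point (isotropy all of $\s$). The paper rules both out by a separate argument: at a putative fixed point one linearizes the semisimple action via Guillemin--Sternberg and observes that $\mathfrak{su}(1,3)$ and $\mathfrak{su}(2,2)$ have no nontrivial $5$-dimensional real representation; at a putative $4$-dimensional orbit with isotropy $\pp_2$ one linearizes the $\mathfrak{so}(1,3)$-action, identifies it as $\R^{1,3}\oplus\R$, and then shows (either by a transversality count with $\pp_{13}$, or by the isotropy-representation argument in Remark~\ref{anotherproof}) that this forces nearby $\SO(1,3)$-orbits to be non-open, contradicting sphericity on the dense set. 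Without this step you have only ``spherical on a dense open set'' in case (a), which is Conjecture~\ref{beloshapka} for $n=2$ but not the stronger dichotomy you claim.

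A second, smaller issue: your sketch of the $\dim\s=11$ examples (``open orbits in the quadric and quotients by discrete subgroups'') is not how the paper realizes $11$, and it is not clear that it works. The paper's examples $M_n$ are \emph{not} everywhere spherical; they contain a complex hypersurface $\mathfrak{S}$ along which the Levi form degenerates, and it is exactly this singular locus that cuts the symmetry algebra from $\mathfrak{su}(2,2)$ down to $\pp_2$. Producing such examples required the rather elaborate analysis of Section~\ref{fourdimorbit}, not a quotient construction.
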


For $n=1$ the above conjecture holds true since a 3-dimensional holomorphically nondegenerate CR-hypersurface always has points of Levi-nondegeneracy. For $n=2$ the conjecture was established in \cite[Corollary 5.8]{IZ} where the proof relied on the reduction of 5-dimensional uniformly Levi-degenerate 2-nondegenerate CR-structures to absolute parallelisms. Thus, for real-analytic connected holomorphically nondegenerate CR-hypersurfaces of CR-dimen\-sion $n=1$, $2$ one has,  just as in the Levi-nondegenerate case, $D_{\hbox{\tiny\rm max}}=n^2+4n+3$. Note, however, that this maximal value is not available for $n\ge 3$ as Conjecture \ref{beloshapka} remains open in this case.

Next, it was shown in \cite{KS} that for $n=1$ the condition $\dim\hol(M,p)>5$ for $p\in M$ implies that $M$ is spherical near $p$, where $\hol(M,p)$ is the Lie algebra of germs at $p$ of real-analytic vector fields on $M$ whose flows consist of CR-transformations. The authors called this statement the Dimension Conjecture and argued that it can be viewed as a variant of H.~Poincar\'e's {\it probl\`eme local}. It is also related to characterizations of 3-dimensional CR-hypersurfaces in terms of the dimension of their stability groups (see \cite{KL} and references therein). The method of proof proposed in \cite{KS} is rather involved and based on considering second-order complex ODEs with meromorphic singularity. On the other hand, in our recent paper \cite{IK} we gave a single-page proof of the conjecture by using only known facts on Lie algebras and their actions. Furthermore, applying the argument of \cite{IK} to the symmetry algebra $\mathfrak{s}(M)$ instead of $\hol(M,p)$, one obtains $D_{\hbox{\tiny\rm smax}}=5$.

In the present paper we focus on the case $n=2$, i.e., on the case of real-analytic connected holomorphically nondegenerate CR-hypersurfaces of dimension 5. One of our goals is to determine the submaximal dimension $D_{\hbox{\tiny\rm smax}}$ in this situation. In our first main result, Theorem \ref{main1}, we show that one has either $\dim\mathfrak{s}(M)=n^2+4n+3=15$ and $M$ is spherical (with Levi form of signature $(2,0)$ or $(1,1)$ everywhere), or $\dim\mathfrak{s}(M)\le11$ with the equality $\dim\mathfrak{s}(M)=11$ occurring only if on a dense open subset $M$ is spherical with Levi form of signature $(1,1)$. In particular, this result improves on the statement of Conjecture \ref{beloshapka} for $n=2$ and yields $D_{\hbox{\tiny\rm smax}}\le 11$. As in the short proof of the main theorem of \cite{KS} given in \cite{IK}, our argument relies on Lie algebra techniques, notably on the description of proper subalgebras of maximal dimension in $\mathfrak{su}(1,3)$ and $\mathfrak{su}(2,2)$ obtained in Propositions \ref{su13}, \ref{su22}. 

Further, in Theorem \ref{main2} we give a series of examples of pairwise nonequivalent CR-hypersurfaces with $\dim\mathfrak{s}(M)=11$ thus proving that $D_{\hbox{\tiny\rm smax}}=11$ (see Corollary \ref{smaxdim}). The examples of Theorem \ref{main2} also lead to the following analogue of the result of \cite{KS} for $n=2$: the condition $\dim\hol(M,p)>11$ for $p\in M$ implies that $M$ is spherical near $p$, and this estimate is sharp (see Corollary \ref{dimconjnew}). The examples of Theorem \ref{main2} are quite nontrivial, and in Section \ref{fourdimorbit} we explain in detail how our search for them was organized. Our strategy is summarized in Theorem \ref{examplegeneral}, which is a result of independent interest. In fact, our methods point towards a potential classification of all CR-hypersurfaces with $\dim\mathfrak{s}(M)=11$ (see Remark \ref{higherdegreeexamples} for details). 

Finally, in Theorem \ref{realizableimensions} we show that every integer between 0 and 10 is also realizable as the symmetry dimension of a real-analytic connected holomorphically nondegenerate 5-dimensional CR-hypersurface. 

\noindent{\bf Acknowledgements.} This work was initiated while the first author was visiting the University of Troms\o{}. Significant progress was made while the first author was visiting Boston College and the second author was visiting the Australian National University. We thank the above institutions for their hospitality. We also thank M. Kol\'a\v r for useful discussions, I. Kossovskiy for comments on an earlier version of the paper, and the referee for helpful remarks. The research is supported by the Australian Research Council. We acknowledge the use of {\tt Maple}'s {\tt DifferentialGeomet\-ry} package for most of our calculations.

\section{CR-hypersurfaces with large symmetry algebras}\label{first}
\setcounter{equation}{0}

\subsection{Subalgebras of maximal dimension in $\mathfrak{su}(1,3)$ and $\mathfrak{su}(2,2)$} 

We start by proving two algebraic propositions.

\begin{proposition}\label{su13} For any proper subalgebra $\h$ of $\g:=\mathfrak{su}(1,3)$ one has $\dim\h\le10$, and in the
case of equality $\h$ is conjugate to the parabolic subalgebra $\pp_{13}$, which is equal to $\g_{\ge0}$ in the
contact grading $\g=\g_{-2}\oplus\g_{-1}\oplus\g_0\oplus\g_1\oplus\g_2$.
\end{proposition}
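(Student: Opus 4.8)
The plan is to study proper subalgebras $\h\subset\g=\mathfrak{su}(1,3)$ via the action of $\h$ on the natural flag variety (or on $\CC^4$ with its Hermitian form of signature $(1,3)$). The first step is to recall that $\g$ is a simple Lie algebra of type $A_3$ with $\RR$-rank one, so its parabolic subalgebras form a single conjugacy class, namely the stabilizer $\pp_{13}$ of an isotropic complex line in $\CC^{1,3}$; in the contact grading $\g=\g_{-2}\oplus\g_{-1}\oplus\g_0\oplus\g_1\oplus\g_2$ one has $\pp_{13}=\g_{\ge0}$, and counting dimensions gives $\dim\g=15$, $\dim\g_{\pm2}=1$, $\dim\g_{\pm1}=4$, $\dim\g_0=5$, hence $\dim\pp_{13}=10$. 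So the content of the proposition is the \emph{maximality statement}: every proper subalgebra has dimension $\le10$, with equality forcing conjugacy to $\pp_{13}$.

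The key dichotomy I would use is whether $\h$ is contained in a proper parabolic subalgebra of $\g$ or not. If $\h$ normalizes some proper parabolic, then since $\pp_{13}$ is the unique proper parabolic up to conjugacy and it is self-normalizing, $\h\subseteq\pp_{13}$, giving $\dim\h\le10$ with equality iff $\h=\pp_{13}$. If $\h$ is not contained in any proper parabolic, then by a theorem of Borel--Tits (over $\RR$, using real algebraic group structure) the algebraic hull of $\h$ is \emph{reductive}; so it suffices to bound the dimension of proper reductive subalgebras of $\g$. Here one invokes the classification of maximal subalgebras of the simple real Lie algebra $\mathfrak{su}(1,3)$ (equivalently, maximal connected subgroups of $\mathrm{SU}(1,3)$): the maximal reductive ones are, up to conjugacy, $\mathfrak{s}(\mathfrak{u}(1)\oplus\mathfrak{u}(3))$ (dimension $9$), $\mathfrak{s}(\mathfrak{u}(1,1)\oplus\mathfrak{u}(2))$ (dimension $7$ or so), $\mathfrak{so}(1,3)\cong\mathfrak{sl}(2,\CC)$ (real dimension $6$), $\mathfrak{sp}(1,1)$-type or $\mathfrak{su}(1,2)\oplus\RR$-type embeddings, etc.; in every case the real dimension is strictly less than $10$. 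Therefore any \emph{proper} subalgebra that is not in a parabolic already has $\dim\h<10$, and the only way to reach $\dim\h=10$ is $\h=\pp_{13}$ (up to conjugacy).

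An alternative, more self-contained route avoids the subalgebra classification: assume $\dim\h=d\ge11$ and derive a contradiction. Then the coadjoint-type action of $\h$ on the $4$-dimensional flag variety $Z=\mathrm{SU}(1,3)/\pp_{13}$ has orbits of codimension $\ge\dim Z-\dim\h+\dim(\h\cap\pp_{13})$; pushing this, one shows $\h$ must act locally transitively on $Z$, hence the isotropy $\h\cap\pp_{13}$ has dimension $\ge d-4\ge7$ inside the $10$-dimensional $\pp_{13}$; intersecting with the grading filtration of $\pp_{13}=\g_0\oplus\g_1\oplus\g_2$ and using that $\h\cap\pp_{13}$ projects to a subalgebra of $\g_0\cong\mathfrak{s}(\mathfrak{u}(1)\oplus\mathfrak{u}(2))\oplus\RR$ acting on $\g_{-1}/\g_{-2}\cong\CC^2$, one forces this projection to be essentially all of $\g_0$, which combined with local transitivity on $Z$ promotes $\h$ to a parabolic-containing subalgebra and then to $\g$ itself, contradicting properness. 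I would lean on whichever version is cleaner in the surrounding context; the hard part either way is the uniform dimension bound on non-parabolic (reductive) proper subalgebras — i.e. checking that no ``large'' reductive subalgebra of $\mathfrak{su}(1,3)$ sneaks past dimension $10$ — which is exactly where the structure of $A_3$ real forms and the absence of intermediate parabolics is doing the real work.
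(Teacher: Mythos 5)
Your overall skeleton is sound and is in fact essentially the same trichotomy the paper uses, just packaged differently: where you invoke Borel--Tits to split into ``contained in a proper parabolic'' versus ``reductive hull,'' the paper invokes Mostow's theorem (a maximal subalgebra of a semisimple real Lie algebra is parabolic, pseudotoric, or semisimple) and then handles the pseudotoric and semisimple branches separately. One small point worth making explicit in your version: for a \emph{Lie subalgebra} (rather than an algebraic subgroup) you pass to the algebraic hull, and you should note that since $\g$ is perfect, a proper $\h$ has proper algebraic hull (otherwise $\g=[\g,\g]=[\h,\h]\subseteq\h$), so the Borel--Tits reduction really does land you in a proper reductive subalgebra.

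The genuine gap is that the entire content of the proposition --- the verification that no reductive subalgebra of $\mathfrak{su}(1,3)$ has dimension $\ge 10$ --- is asserted rather than proved, and the one concrete thing you say about it is wrong in a way that matters. You list ``$\mathfrak{sp}(1,1)$-type embeddings'' among the maximal reductive subalgebras of $\mathfrak{su}(1,3)$; but $\mathfrak{sp}(1,1)$ has real dimension $10$, so if it embedded it would be a $10$-dimensional subalgebra not conjugate to $\pp_{13}$ and the equality clause of the proposition would be false. In fact it does \emph{not} embed: any such embedding would give $\C^4$ an invariant Hermitian form of signature $(1,3)$, whereas the form preserved by $\mathfrak{sp}(1,1)$ acting on $\H^{1,1}\cong\C^4$ has signature $(2,2)$ (this is exactly why $\mathfrak{sp}(1,1)$ \emph{does} occur inside $\mathfrak{su}(2,2)$ in Proposition 2.3 but not here); the same signature/compactness arguments rule out $\mathfrak{sp}(2)$ and $\mathfrak{sp}(4,\R)$. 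This case analysis --- running through the simple real Lie algebras of dimension $<15$ and their direct sums, and killing each candidate by rank, signature of the invariant Hermitian form on $\C^4$, or triviality of centralizers (to exclude direct summands) --- is where the paper spends its effort, and your proposal acknowledges it as ``the hard part'' without carrying it out. Separately, your alternative flag-variety route starts from the wrong dimension: $\SU(1,3)/\pp_{13}$ is the null quadric in $\CP^3$, of real dimension $15-10=5$, not $4$, so the numerology of that sketch needs redoing before it could be assessed.
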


\begin{remark}\label{remp13} Abstractly, we have $\pp_{13}=\g_0\oplus\g_1\oplus\g_2=
(\mathfrak{su}(2)\oplus\R^2)\oright\mathfrak{heis}_5$, where $\mathfrak{heis}_5:=\g_1\oplus\g_2=\C^2\oplus\R$
(with the natural $\g_0$-module structure) is the Heisenberg algebra. This is the only parabolic subalgebra of $\g$ up to conjugation.
\end{remark}

\begin{proof} By Mostow's theorem (see \cite{Mo}), a maximal subalgebra of a semisimple Lie algebra is either parabolic, or
pseudotoric (the stabilizer of a pseudotorus), or semisimple. The first case yields, up to conjugation, the parabolic subalgebra $\pp_{13}$. In the second case, a pseudotorus is a 1- or 2-dimensional subalgebra of a Cartan subalgebra of $\g$. The stabilizer of such a subalgebra has maximal dimension in the 1-dimensional case when it is conjugate to either $\mathfrak{u}(3)$ or $\mathfrak{u}(1,2)$. Hence, the dimension of a pseudotoric subalgebra does not exceed 9.

The last case, when $\h$ is semisimple, is the most complicated. First, we list all simple real Lie algebras of dimension less than $\dim\g=15$:\footnote{Here for a complex Lie algebra ${\mathfrak a}$, we denote by ${\mathfrak a}_{{}_{\R}}$ the underlying real Lie algebra. We also use this convention for complex Lie groups.}
\begin{equation}
\begin{array}{rl}
(3D) &\quad  \mathfrak{su}(2)=\mathfrak{so}(3,\R)=\mathfrak{sp}(1),\
\mathfrak{su}(1,1)=\mathfrak{sl}(2,\R)=\mathfrak{sp}(2,\R);\\
\vspace{-0.3cm}\\
(6D) &\quad  \mathfrak{sl}(2,\C)_\R=\mathfrak{so}(1,3);\\
\vspace{-0.3cm}\\
(8D) &\quad  \mathfrak{su}(3),\ \mathfrak{su}(1,2),\ \mathfrak{sl}(3,\R);\\
\vspace{-0.3cm}\\
(10D) &\quad  \mathfrak{sp}(2)=\mathfrak{so}(5),\ \mathfrak{sp}(1,1)=\mathfrak{so}(1,4),\
\mathfrak{sp}(4,\R)=\mathfrak{so}(2,3);\\
\vspace{-0.3cm}\\
(14D) &\quad  \g_2^c=\op{Lie}(G_2^c),\ \g_2^*=\op{Lie}(G_2^*).
\end{array}\label{list}
\end{equation}

If $\dim\h=14$, then $\h$ could be one of the simple Lie algebras shown in $(14D)$ but this cannot happen as then
$\h$ would have a nontrivial representation $\g/\h$ of dimension 1. Indeed, if the representation were trivial, then due to the existence of an invariant complement, the algebra $\g=\h\oplus\R$ would not be simple. Alternatively, $\h$ could be the direct sum of several simple algebras from the above list due to
the partitions $14=6+8=3+3+8$ but any such sum is a semisimple Lie algebra of rank 4 and thus cannot be 
embedded in the semisimple algebra $\g$ of rank 3.

If $\dim\h=13$, then $\h$ could be the direct sum of a 3- and 10-dimensional algebras from the list above, but $\g$ contains no $\mathfrak{sp}$-subalgebras of rank 2 shown in $(10D)$. Indeed, any such subalgebra would have to have a representation on $\C^4$ 
endowed with an invariant Hermitian form of signature $(1,3)$,\footnote{Throughout the paper we say that a Hermitian form $H$ on $\C^n$ has signature $(p,q)$, with $p+q=n$, if the bilinear form $\Re(H)$ has signature $(2p,2q)$.} which cannot occur in any of the three 10-dimensional possibilities: in the first 
two cases the signature is $(4,0)$ and $(2,2)$, respectively, for the standard representations $\C^{4,0}$ and $\C^{2,2}$; in the last case the standard representation is $\R^4$, and the complex representation of minimal dimension is $\C^4=\R^4\otimes\C$ where the signature is $(2,2)$.

If $\dim\h=12$, then $\h$ could be the direct sum of several simple algebras from the above list due to the
partitions $12=6+6=3+3+6=3+3+3+3$, but any such sum is a semisimple Lie algebra of rank 4 and thus cannot be
embedded in the semisimple algebra $\g$ of rank 3.

If $\dim\h=11$, then $\h$ could be the direct sum of a 3- and 8-dimensional algebras from the list above. Both $\mathfrak{su}(3)$ and $\mathfrak{su}(1,2)$ naturally embed in $\g=\mathfrak{su}(1,3)$, and this is the only embedding up to conjugation (the only complex representation of each of $\mathfrak{su}(3)$ and $\mathfrak{su}(1,2)$ on $\C^4$ is, up to equivalence, the direct sum of the standard 3-dimensional complex representation and the trivial 1-dimensional one).
The centralizer of each of $\mathfrak{su}(3)$ and $\mathfrak{su}(1,2)$ in $\g$ is 1-dimensional, which is apparent in the embeddings
$\mathfrak{u}(3),\mathfrak{u}(1,2)\hookrightarrow\mathfrak{su}(1,3)$. Thus, neither $\mathfrak{su}(3)$ nor $\mathfrak{su}(1,2)$ admits a 3-dimensional direct summand in $\g$. Finally, the only nontrivial complex representation of the algebra
$\mathfrak{sl}(3,\R)$ of complex dimension not exceeding 4 is the complexified standard representation $\C^3=\R^3\otimes\C$, but it does 
not possess any invariant Hermitian form, hence there is no embedding of $\mathfrak{sl}(3,\R)$ in $\g$. This completes the proof. \end{proof}

\begin{proposition}\label{su22} For any proper subalgebra $\h$ of $\g:=\mathfrak{su}(2,2)$ one has $\dim\h\le11$, and in the
case of equality $\h$ is conjugate to the parabolic subalgebra $\pp_2$, which is isomorphic to $\g_{\ge0}$ in the
$|1|$-grading $\g=\g_{-1}\oplus\g_0\oplus\g_1$.
\end{proposition}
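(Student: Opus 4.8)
The plan is to mirror the proof of Proposition \ref{su13}. By Mostow's theorem a maximal subalgebra $\h$ of $\g$ is parabolic, pseudotoric, or semisimple, and throughout it will be convenient to exploit the isomorphism $\mathfrak{su}(2,2)\cong\mathfrak{so}(2,4)$, in which several embedding questions become transparent. Since $\g$ has real rank $2$, there are, up to conjugation, exactly three proper parabolic subalgebras: the minimal one (dimension $9$) and the two maximal ones, namely the contact ($|2|$-graded) parabolic (dimension $10$) and the $|1|$-graded parabolic $\pp_2=\g_0\oplus\g_1$ with $\g_0\cong\mathfrak{co}(1,3)$ and $\dim\g_1=4$, so that $\dim\pp_2=11$. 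Hence among parabolic subalgebras the maximal dimension is $11$, attained only by $\pp_2$. In the pseudotoric case, the stabilizer of a $1$- or $2$-dimensional subalgebra of a Cartan subalgebra has dimension at most $9$ (the bound being realized by $\mathfrak{s}(\mathfrak{u}(1)\oplus\mathfrak{u}(1,2))$), and so such subalgebras never reach dimension $11$.

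The main work, and the only step where I expect genuine difficulty, is the semisimple case, which I would treat by running through the list \eqref{list} and its products. Dimension $14$ is excluded exactly as in Proposition \ref{su13} (a simple $14$-dimensional $\h$ would yield a nontrivial $1$-dimensional module $\g/\h$, which is impossible; a product of simple algebras would have rank $\ge4>\rank\g=3$), and dimension $12$ is excluded purely by the rank obstruction, since the partitions $6{+}6$, $3{+}3{+}6$ and $3{+}3{+}3{+}3$ all give rank $4$. The new feature, absent for $\mathfrak{su}(1,3)$, is that $\g$ genuinely contains several of the larger algebras from the list: in the $\mathfrak{so}(2,4)$ model $\mathfrak{so}(1,4)=\mathfrak{sp}(1,1)$ and $\mathfrak{so}(2,3)=\mathfrak{sp}(4,\R)$ occur as stabilizers of a non-isotropic vector in $\R^{2,4}$, and $\mathfrak{su}(1,2)$ occurs acting on $\R^{2,4}=\C^{1,2}_\R$. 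So for dimensions $13$ and $11$ a nonembeddability argument is unavailable, and one must instead bound centralizers.

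For dimension $13$ the only partition into available simple dimensions is $3{+}10$, so $\h$ is the direct sum of a simple ideal of dimension $3$ and one of dimension $10$. The only $10$-dimensional simple subalgebras of $\mathfrak{so}(2,4)$ are $\mathfrak{so}(1,4)$ and $\mathfrak{so}(2,3)$ — the relevant $6$-dimensional real representation must be the $5$-dimensional vector representation plus a trivial line, which rules out $\mathfrak{so}(5)$ by signature — and each of them has centralizer $\mathfrak{so}(1,0)=\{0\}$ in $\g$, leaving no room for the $3$-dimensional ideal. For dimension $11$ the only partition is $3{+}8$; among the $8$-dimensional simple real Lie algebras, $\mathfrak{su}(3)$ and $\mathfrak{sl}(3,\R)$ do not embed in $\g$ (by the invariant-Hermitian-form arguments of Proposition \ref{su13}: the signature on $\C^3\oplus\C$ cannot be $(2,2)$, and $\mathfrak{sl}(3,\R)$'s only small faithful complex representation carries no invariant Hermitian form), whereas $\mathfrak{su}(1,2)$ embeds with a $1$-dimensional centralizer only, recovering the $9$-dimensional pseudotoric subalgebra $\mathfrak{u}(1,2)$ and again leaving no room for a $3$-dimensional ideal. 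Thus $\g$ has no semisimple subalgebra of dimension $\ge11$. Combining the three cases: every subalgebra of dimension $\ge11$ is contained in a maximal subalgebra of dimension $11$ — there are none of dimensions $12$–$14$ — which is then forced to be parabolic, hence conjugate to $\pp_2$; this gives $\dim\h\le11$, with equality only when $\h$ is conjugate to $\pp_2$. The hard part, as flagged, is the careful accounting of which simple algebras embed in $\g$ and of their centralizers; everything else parallels Proposition \ref{su13}.
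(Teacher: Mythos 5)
Your proof is correct and follows essentially the same route as the paper: Mostow's theorem, the list of low-dimensional simple real Lie algebras, rank obstructions for dimensions $12$ and $14$, and centralizer bounds for the $3+10$ and $3+8$ splittings. The only cosmetic difference is that you identify the $10$-dimensional simple subalgebras and their trivial centralizers via the $\mathfrak{so}(2,4)$ model, whereas the paper works directly with invariant Hermitian forms on $\C^{2,2}$; both yield the same conclusion.
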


\begin{remark}\label{remp22} Abstractly, we have $\pp_2=\g_0\oplus\g_1=\mathfrak{co}(1,3)\oright\R^{1,3}$, where 
$\mathfrak{co}(1,3):=\mathfrak{so}(1,3)\oplus\R=\mathfrak{sl}(2,\C)_{\R}\oplus\R$. This subalgebra will be considered in detail in Section \ref{algp2} below. There also exists another parabolic subalgebra of $\g$, of dimension 10, namely the analogue of the parabolic subalgebra from Proposition \ref{su13} written as $\pp_{13}=\g_0\oplus\g_1\oplus\g_2=(\mathfrak{su}(1,1)\oplus\R^2)\oright\mathfrak{heis}_5$ 
in the contact grading, where $\mathfrak{heis}_5:=\C^{1,1}\oplus\R$. These are the only parabolic subalgebras of $\g$ up to conjugation.
\end{remark}

\begin{proof} We follow the same idea as in the proof of Proposition \ref{su13}, but in this situation there are more subtleties in describing the maximal algebras of $\g$. Again, using Mostow's theorem we get either the parabolic subalgebras $\pp_2$, $\pp_{13}$, or pseudotoric subalgebras (whose dimensions are again at most 9), or  semisimple subalgebras. Thus, we now assume that $\h$ is semisimple and use list (\ref{list}).

The case $\dim\h=14$ is completely analogous to that in the proof of Proposition \ref{su13}. If $\dim\h=13$, then $\h$ could be the direct sum of a 3- and 10-dimensional algebras, and this time $\g$ does have $\mathfrak{sp}$-subalgebras of rank 2. Notice first that $\mathfrak{sp}(2)$ cannot be embedded in $\g$ since any maximal compact subalgebra of $\g$ is conjugate to $\mathfrak{so}(4)=\mathfrak{su}(2)\oplus\mathfrak{su}(2)$, a subalgebra of dimension $6<\dim\mathfrak{sp}(2)=10$. On the other hand, both 
$\mathfrak{sp}(1,1)$ and $\mathfrak{sp}(4,\R)$ do embed in $\mathfrak{su}(2,2)$ as can be seen via their standard representations. Indeed, $\mathfrak{sp}(1,1)$ acts on $\H^{1,1}$, which can be identified with $\C^{2,2}$ by forgetting the quaternionic structure except for $i$, and there is an invariant Hermitian form. Also, the standard action of the algebra $\mathfrak{sp}(4,\R)$ on $\R^4$ yields an action on $\C^4$ and leads to a unique (up to equivalence) nontrivial complex representation of minimal dimension. The symplectic structure $\omega_0$ on $\R^4$ determines the invariant Hermitian form $g+i\omega$ on $\C^4=\R^4+i\R^4$ of signature $(2,2)$ by the formulas $g(x_1+ix_2,y_1+iy_2):=\omega_0(x_1,y_2)-\omega_0(x_2,y_1)$, $\omega:=\omega_0\oplus\omega_0$,
and this gives the embedding $\mathfrak{sp}(4,\R)\hookrightarrow\mathfrak{su}(2,2)$.
However, a direct computation shows that the centralizer of each of $\mathfrak{sp}(1,1)$, $\mathfrak{sp}(4,\R)$ 
in $\g$ is trivial, hence neither subalgebra admits a 3-dimensional direct summand in $\g$.

The case $\dim\h=12$ is analogous to that in the proof of Proposition \ref{su13}, and for $\dim\h=11$ the only 
semisimple subalgebra could be the direct sum of a 3-dimensional and 8-dimensional algebras. Among the latter only $\mathfrak{su}(1,2)$
embeds in $\g$, and the centralizer of this embedding is easily seen to be 1-dimensional. Thus, $\mathfrak{su}(1,2)$ does not admit a 3-dimensional direct summand in $\mathfrak{g}$. The proof is complete. 

\end{proof}

\begin{remark}\label{su22dim10} In parallel with Proposition \ref{su13} we note that every 10-dimensional subalgebra of $\mathfrak{su}(2,2)$ is conjugate to one of $\pp_{13}$ (as described in Remark \ref{remp22}), $\mathfrak{{so}}(1,3)\oright\R^{1,3}\subset\pp_{2}$, $\mathfrak{sp}(1,1)$, $\mathfrak{sp}(4,\R)$.  
\end{remark}

\subsection{A gap phenomenon for the symmetry algebra of a 5-dimensional CR-hypersurface}

Let first $M$ be a connected smooth CR-hypersurface. Recall that an infinitesimal CR-automorphism of $M$ is a smooth vector field on $M$ whose flow consists of CR-transformations. We denote the Lie algebra of all such vector fields by $\mathfrak{aut}(M)$ and the Lie algebra of germs of infinitesimal CR-automorphisms of $M$ at a point $p$ by $\mathfrak{aut}(M,p)$. 

From now on, we assume that $M$ is real-analytic. The main object of our study is the Lie subalgebra $\hol(M)\subset\mathfrak{aut}(M)$ of {\it real-analytic}\, infinitesimal CR-automorphisms of $M$. In the real-analytic category, $\hol(M)$ is exactly the symmetry algebra $\mathfrak{s}(M)$ of $M$ and often, when there is no fear of confusion, we denote it simply by $\mathfrak{s}$. For $p\in M$ one can also consider the Lie subalgebra $\hol(M,p)\subset\mathfrak{aut}(M,p)$ of germs of real-analytic infinitesimal CR-automorphisms of $M$ at $p$.  Clearly, $\hol(M)$ may be viewed as a subalgebra of $\hol(M,p)$ for any $p$. By \cite[Theorem 1.12]{AF}, the CR-hypersurface $M$ admits a closed real-analytic CR-embedding as a hypersurface in a complex manifold ${\mathcal M}$, and it is not hard to show (see, e.g., \cite[Proposition 12.4.22]{BER}) that every real-analytic infinitesimal CR-automorphism defined on an open subset $U\subset M$ is the real part of a holomorphic vector field defined on an open subset ${\mathcal U}\subset {\mathcal M}$ with $U\subset{\mathcal M}\cap{\mathcal U}$. In what follows we will often speak about an element of $\hol(M)$ either as a holomorphic vector field, say $V$, defined near $M$ in ${\mathcal M}$ or as $2\Re(V)$ restricted to $M$, without mentioning this difference explicitly. 

We always assume $M$ to be {\it holomorphically nondegenerate}. The condition of holomorphic nondegeneracy for a real-analytic hypersurface in complex space was introduced in \cite{S} and requires that for every point of the hypersurface there exists no nontrivial holomorphic vector field tangent to the hypersurface near the point. Discussions of this condition can be found in \cite[\S 11.3]{BER}, \cite{Eb} but for the purposes of this paper we will only require the fact, stated in \cite[Corollary 12.5.5]{BER}, that the holomorphic nondegeneracy of $M$ is equivalent to the finite-dimensionality of all the algebras $\hol(M,p)$. Notice that together with \cite[Proposition 12.5.1]{BER} this corollary implies that the finite-dimensionality of $\hol(M,p_0)$ for some $p_0\in M$ implies  the finite-dimensionality of $\hol(M,p)$ for all $p\in M$. It is clear that for a holomorphically nondegenerate $M$ the symmetry algebra $\hol(M)$ is finite-dimensional. Also, in this case for every $p\in M$ there exists a connected neighborhood $U$ of $p$ in $M$ for which the natural map $\hol(U)\to\hol(M,p)$ is surjective; for any such $U$ we have $\hol(M,p)=\hol(U,p)=\hol(U)$.

In the present paper we focus on the case $\dim M=5$. Recall from the introduction (see Conjecture \ref{beloshapka}) that in this situation $D_{\hbox{\tiny\rm max}}:=\max_M\dim \hol(M)=15$ and that $\dim \hol(M)=15$ implies that $M$ is spherical on a dense open subset. In our first main result below, we improve on this statement by demonstrating  that $\dim \hol(M)=15$ in fact yields that $M$ is spherical {\it everywhere} and also show that for the symmetry algebra a {\it gap phenomenon}\, occurs, namely, that several values immediately below the maximal value $15$ are not realizable as $\dim\hol(M)$ for\linebreak any $M$.

\begin{theorem}\label{main1} Assume that a real-analytic connected {\rm 5}-dimensional CR-hyper\-surface $M$ is holomorphically nondegenerate. Then for its symmetry algebra $\mathfrak{s}=\hol(M)$ one has either $\dim\mathfrak{s}=15$ and $M$ is spherical {\rm (}with Levi form of signature $(2,0)$ or $(1,1)$ everywhere{\rm )}, or $\dim\mathfrak{s}\le11$. Furthermore, if $\dim\mathfrak{s}=11$, then on a dense open subset $M$ is spherical with Levi form of signature $(1,1)$.
\end{theorem}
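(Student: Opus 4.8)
The plan is to exploit the Levi form of $M$ and split into cases according to the generic behavior of its signature. Since $M$ is holomorphically nondegenerate and $5$-dimensional, it is either Levi-nondegenerate on a dense open set or uniformly Levi-degenerate (hence $2$-nondegenerate on a dense open set, by the $n=2$ classification of degeneracy types). In the Levi-nondegenerate case the Levi form is either definite (signature $(2,0)$) or indefinite (signature $(1,1)$) on a dense open subset $M_0$, and on $M_0$ the symmetry algebra embeds into the symmetry algebra of the model: concretely, the stalk $\hol(M,p)$ for $p\in M_0$ injects into $\mathfrak{su}(2,2)$ in the definite case, into $\mathfrak{su}(1,3)$... — more precisely, the relevant simple Lie algebra of infinitesimal automorphisms of the nondegenerate hyperquadric of signature $(2,0)$ is $\mathfrak{su}(1,3)$ and of signature $(1,1)$ is $\mathfrak{su}(2,2)$. (One gets this from the classical Tanaka--Chern--Moser theory: the symmetry algebra acts on the Cartan bundle, so $\dim\hol(M)\le\dim\hol(M,p)\le\dim\mathfrak{su}(2,2)=15$, with equality forcing flatness, i.e. sphericity near $p$.) If $\dim\mathfrak{s}=15$, equality must hold at a generic $p$, so $M$ is spherical on $M_0$; a separate continuation/real-analyticity argument (using that the Levi form of a spherical hypersurface cannot change signature and that $\mathfrak{s}$ is $15$-dimensional globally) then upgrades this to sphericity on all of $M$, and simultaneously rules out the degenerate case, since a uniformly $2$-nondegenerate $5$-manifold has $\dim\hol(M,p)\le\dim\mathfrak{g}_2^*$-type bound, well below $15$ — in fact at most $10$ by \cite{IZ}.

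Next, assuming $\dim\mathfrak{s}<15$, I would show $\dim\mathfrak{s}\le 11$. In the Levi-nondegenerate case, pick $p\in M_0$; then $\mathfrak{s}\hookrightarrow\hol(M,p)$, and $\hol(M,p)$ is a subalgebra of either $\mathfrak{su}(1,3)$ (definite case) or $\mathfrak{su}(2,2)$ (indefinite case). If $\hol(M,p)$ is the full simple algebra, $M$ is spherical near $p$ and one must have $\dim\hol(M,p)=15$; but one can show in the spherical case that a $15$-dimensional stalk forces $\dim\mathfrak{s}=15$ globally (the global symmetry algebra of a connected spherical hypersurface with the maximal local model is itself maximal — this is where the hypothesis "$\dim\mathfrak{s}<15$" is used to exclude it). Hence $\hol(M,p)$ is a \emph{proper} subalgebra, and Propositions \ref{su13}, \ref{su22} give $\dim\hol(M,p)\le 10$ in the definite case and $\le 11$ in the indefinite case. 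In the uniformly degenerate ($2$-nondegenerate) case, the reduction to an absolute parallelism from \cite{IZ} gives $\dim\hol(M,p)\le 10$. Either way $\dim\mathfrak{s}\le\dim\hol(M,p)\le 11$.

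Finally, for the equality clause: if $\dim\mathfrak{s}=11$, then by the bounds just established the Levi form must be of signature $(1,1)$ on a dense open set (the degenerate case and the definite case both cap at $10$), and $\hol(M,p)\cong\mathfrak{p}_2$, the $11$-dimensional parabolic of $\mathfrak{su}(2,2)$, for generic $p$ — since $11$ is attained only there by Proposition \ref{su22}. The appearance of the \emph{parabolic} subalgebra is precisely the flatness/sphericity signal: the grading element of $\mathfrak{p}_2$ generates a grading of the full symmetry algebra compatible with the parabolic geometry structure, which by the usual harmonic-curvature argument (the curvature is a section of a bundle on which $\mathfrak{p}_2$ acts, and an $11$-dimensional symmetry forces it to vanish at a generic point) makes $M$ spherical on the dense open set where the Levi form has signature $(1,1)$ and $\hol(M,p)=\mathfrak{p}_2$. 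The main obstacle, and the step requiring the most care, is this last implication "generic stalk $=\mathfrak{p}_2$ $\Rightarrow$ spherical on a dense open set": one has to rule out the possibility that $\hol(M,p)$ has the right dimension but acts with a lower-dimensional orbit or sits inside the Cartan bundle non-transitively, which is exactly the kind of subtle orbit analysis carried out later in the paper (Section \ref{fourdimorbit}); the clean Lie-algebraic input is that \emph{any} $11$-dimensional subalgebra of $\mathfrak{su}(2,2)$ is conjugate to $\mathfrak{p}_2$, so the only obstruction to transitivity is global/topological and is defeated by passing to a dense open set.
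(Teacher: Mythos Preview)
There are two genuine gaps and one overcomplication.

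\textbf{The central error.} Your claim that ``a $15$-dimensional stalk forces $\dim\mathfrak{s}=15$ globally'' is false, and the paper's own examples $M_n$ are counterexamples: at any Levi-nondegenerate point $p\in M_n\setminus\mathfrak{S}$ the manifold is spherical, so $\hol(M_n,p)\cong\mathfrak{su}(2,2)$ is $15$-dimensional, yet $\dim\mathfrak{s}(M_n)=11$. Consequently your conclusion ``hence $\hol(M,p)$ is a proper subalgebra'' is unjustified and can simply be false. The fix is immediate once you see it: apply Propositions~\ref{su13}, \ref{su22} to $\mathfrak{s}$ itself, not to $\hol(M,p)$. Since $\mathfrak{s}\hookrightarrow\hol(M,p)\cong\mathfrak{su}(1,3)$ or $\mathfrak{su}(2,2)$ at a spherical point, and by hypothesis $\dim\mathfrak{s}<15$, the algebra $\mathfrak{s}$ is a \emph{proper} subalgebra of the model, whence $\dim\mathfrak{s}\le 10$ (definite) or $\le 11$ (indefinite). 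This is exactly the paper's route.

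\textbf{Sphericity at degenerate points when $\dim\mathfrak{s}=15$.} You dismiss this as ``a separate continuation/real-analyticity argument,'' but there is no such soft argument: real-analyticity alone does not propagate sphericity across the Levi-degenerate locus. The paper works for this. One shows that the local action of $S=\SU(1,3)$ or $\SU(2,2)$ cannot have a singular orbit in $V$: a fixed point is excluded by Guillemin--Sternberg linearization (no nontrivial $5$-dimensional representation of $\mathfrak{s}$), and for $\mathfrak{su}(2,2)$ a $4$-dimensional orbit with isotropy $\mathfrak{p}_2$ is excluded by linearizing the $\mathfrak{so}(1,3)$-action and reaching a contradiction with the known open orbits on $M\setminus V$. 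This is the substantive part of the proof and cannot be skipped.

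\textbf{The $\dim\mathfrak{s}=11$ clause.} You overcomplicate this. No harmonic-curvature argument or orbit analysis is needed, and Section~\ref{fourdimorbit} plays no role here. The sphericity on a dense open set is already built into the case split: by \cite{Kr}, if $M$ is nonspherical at some Levi-nondegenerate point then $\dim\mathfrak{s}\le 8$; by \cite{IZ} the uniformly $2$-nondegenerate case gives $\le 10$; and the definite spherical case gives $\le 10$ by Proposition~\ref{su13}. So $\dim\mathfrak{s}=11$ forces $M\setminus V$ to be spherical of signature $(1,1)$ automatically.
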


\begin{proof} There exists a proper real-analytic subset $V\subset M$ such that the complement $M\setminus V$ is either (i) Levi nondegenerate or (ii) uniformly Levi-degenerate of rank 1 and 2-nondegenerate (see \cite[\S 11.1]{BER} for the definition of $k$-nondegeneracy). In case (ii), by \cite[Corollary 5.4]{IZ} one has $\dim\mathfrak{s}\le10$. In case (i), if $M$ is nonspherical at some $p\in M\setminus V$, by \cite{Kr} we have $\dim\mathfrak{s}\le 8$. Thus, using Propositions \ref{su13}, \ref{su22}, we conclude that one of the following possibilities occurs: (a) $\dim\mathfrak{s}=15$, the manifold $M\setminus V$ is spherical with Levi form of signature $(2,0)$ (resp.~of signature $(1,1)$) everywhere, and $\mathfrak{s}=\mathfrak{su}(1,3)$ (resp.~$\mathfrak{s}=\mathfrak{su}(2,2)$), or (b) $\dim\mathfrak{s}\le11$, and the equality holds only if $M\setminus V$ is spherical with Levi form of signature $(1,1)$.

To prove the theorem, we only need to consider case (a) and show that $M$ is spherical at the points of $V$. Let $S:=\SU(1,3)$ (resp.~$S:=\SU(2,2)$) provided $\mathfrak{s}=\mathfrak{su}(1,3)$ (resp.~$\mathfrak{s}=\mathfrak{su}(2,2)$). If the orbit of every point of $V$ under the corresponding local action of $S$ is open, then $M$ is spherical as required. Another possibility is the existence of a local $S$-orbit $\Gamma$ in $V$. As $\Gamma$ has positive codimension in $M$ and the orbit of every point in $M\setminus V$ is open due to sphericity, $\Gamma$ is a singular orbit. Let us prove that no such orbit can in fact occur. Locally near $p_0\in\Gamma$ we have $\Gamma=S/R$, thus the Lie algebra $\mathfrak{s}$ has a subalgebra $\mathfrak{r}$ with $11\le\dim\mathfrak{r}\le15$. If $\mathfrak{s}=\mathfrak{su}(1,3)$, by Proposition \ref{su13} the only possibility is $\mathfrak{r}=\mathfrak{s}$. This means that $\Gamma=\{p_0\}$, so the action has a fixed point. Then by Guillemin-Sternberg's theorem (see \cite[pp.~113--115]{GS}), the action of the semisimple algebra $\mathfrak{su}(1,3)$ is linearizable near $p_0$, and we obtain a nontrivial 5-dimensional representation of $\mathfrak{su}(1,3)$. But the lowest-dimensional representation of $\mathfrak{su}(1,3)$ is the standard $\C^{1,3}$ of real dimension 8, which is a contradiction.

Similarly, for $\mathfrak{s}=\mathfrak{su}(2,2)$ in the case $\mathfrak{r}=\mathfrak{s}$ we obtain a contradiction.
However, in this situation, by Proposition \ref{su22}, the subalgebra $\mathfrak{r}$ can be also conjugate to the subalgebra $\mathfrak{p}_2$ of dimension 11. In this case $\Gamma$ is 4-dimensional, and, considering the action of $\g_0=\mathfrak{co}(1,3)$ on $\g_{-1}$ in the $|1|$-grading on $\mathfrak{s}$, we notice that $\mathfrak{so}(1,3)\subset\g_0\subset\mathfrak{r}\subset\mathfrak{s}$ fixes the point $p_0$ and acts on $T_{p_{{}_0}}(\Gamma)$ as on the standard representation $\R^{1,3}$, up to an automorphism (see Section \ref{algp2} for details). Since $\mathfrak{so}(1,3)$ is simple, we again invoke Guillemin-Sternberg's theorem and obtain a nontrivial linearization near $p_0\in M$. This yields a representation of $\mathfrak{so}(1,3)$ that is the sum of the representation on $\R^{1,3}$ and the trivial one. Thus, no local orbit of $\SO(1,3)$ near the point $p_0$ is open (in fact all such orbits have codimension greater than 1). 

Recall now that due to the sphericity of $M\setminus V$, for every point in this subset the isotropy subalgebra is (a conjugate of) the subalgebra $\mathfrak{p}_{13}\subset\mathfrak{su}(2,2)$ described in Remark \ref{remp22}. On the other hand, it is straightforward to see that for a generic element $g\in \SU(2,2)$ arbitrarily close to the identity, the Lie subalgebra $\mathfrak{so}(1,3)\subset{\mathfrak{su}}(2,2)$ is transversal (as a vector space) to the conjugate $\op{Ad}_g\mathfrak{p}_{13}$, i.e., $\dim\mathfrak{so}(1,3)\cap \op{Ad}_g\mathfrak{p}_{13}=1$. Therefore, arbitrarily close to $p_0$ one can find a point whose local $\SO(1,3)$-orbit is open in $M$, which contradicts our earlier conclusion. We have thus shown that the local action of $S$ has no singular orbits, and the proof is complete.

\end{proof}

\begin{remark}\label{anotherproof} One can replace the argument in the last paragraph of the proof of Theorem \ref{main1} with the following argument. As before, we linearize the action of $\mathfrak{so}(1,3)$ near $p_0$ and, in addition, notice that the local action of the group $\CO(1,3)$ on $(\Gamma,p_0)\simeq(\R^4,0)$ has one closed orbit (the null-cone) and three open ones: the positive and negative timelike orbits and the spacelike orbit. Consider a point in $M$ that, with respect to the linearizing coordinates, lies in the positive timelike region. It has a neighborhood $U\subset M$ foliated by local 4-dimensional $\CO(1,3)$-orbits, and we call this foliation $\mathcal{F}$. The isotropy subalgebra of any point $p\in U$ is conjugate to $\mathfrak{so}(3)$, and the isotropy representation on $T_p(\mathcal{F})=\R^4$ is, up to an automorphism, the sum of the standard and trivial representations. As this representation preserves no complex structure, the foliation $\mathcal{F}$ cannot be complex anywhere, i.e., the CR-distribution $H(M)$ intersects $\mathcal{F}$ transversally everywhere. Consider the distribution $L:=H(M)\cap T(\mathcal{F})$ of rank 3 and the complex line distribution $\Pi:=L\cap JL$. Observe that $\Pi$ is $\CO(1,3)$-invariant. On the other hand, it is not hard to see that the isotropy subalgebra at $p$ preserves no 2-dimensional subspaces, and this contradiction finalizes the proof. Notice that this second argument does not rely on the sphericity (or even Levi-nondegeneracy) property of $M\setminus V$. In fact, combined with the second and third paragraphs of the proof of Theorem \ref{main1}, it shows that no 5-dimensional real-analytic CR-hypersurface admits a local action of either of the groups $\SU(1,3)$, $\SU(2,2)$ with an orbit of positive codimension.
\end{remark}

\section{Examples of CR-hypersurfaces with $\dim\mathfrak{s}=11$}\label{examples}
\setcounter{equation}{0}

We will now elaborate on the case $\dim\mathfrak{s}=11$ as stated in Theorem \ref{main1} and show that it is in fact realizable. More precisely, in our second main result below we will give a countable number of pairwise nonequivalent examples with $\dim\mathfrak{s}=11$. By Proposition \ref{su22}, if $\dim\mathfrak{s}=11$, the algebra $\mathfrak{s}$ is isomorphic to ${\mathfrak p}_2=\mathfrak{co}(1,3)\oright\R^{1,3}\subset{\mathfrak{su}}(2,2)$, and we start by collecting basic facts on this subalgebra.

\subsection{The subalgebra ${\mathfrak p}_2$}\label{algp2}

Realize $\mathfrak{su}(2,2)$ as 
$$
\mathfrak{su}(2,2)=\left\{X=\begin{pmatrix}A & B\\ C & -A^*\end{pmatrix}: A\in\mathfrak{gl}(2,\C), \op{Tr}(A)\in\R, B=B^*, C=C^*\right\},
 $$
where the lower-triangular, block-diagonal and upper-triangular parts give $\g_{-1}$, $\g_0$ and $\g_1$, respectively (cf.~Proposition \ref{su22} and Remark \ref{remp22}). In what follows we identify $\begin{pmatrix}A & 0\\ 0 & -A^*\end{pmatrix}$ with $A$, $\begin{pmatrix}0 & B\\ 0 & 0\end{pmatrix}$ with $B$, $\begin{pmatrix}0 & 0\\ C & 0\end{pmatrix}$ with $C$ and understand commutators among $A$, $B$, $C$ as those among the corresponding extended matrices. 

The action of $\g_0=\mathfrak{sl}(2,\C)_{\R}\oplus\R$ on $\g_{-1}$ is
\begin{equation}
[A,C]=-(A^*C+CA),\label{actionong-1}
\end{equation}
and, as was stated in the proof of Theorem \ref{main1} in Section \ref{first}, the induced action of $\mathfrak{sl}(2,\C)_{\R}={\mathfrak{so}}(1,3)$ is, up to an automorphism, its standard action on $\R^{1,3}$. This can be seen by writing any Hermitian matrix $C$ as
$$
C=\left(
\begin{array}{ll}
t+z & x-iy\\
x+iy & t-z
\end{array}
\right),\quad\hbox{with $t,x,y,z\in\R$,}
$$
and identifying $\g_{-1}$ with $\R^{1,3}$ by means of the 4-tuple $(t,x,y,z)$. 

Similarly, the action of $\g_0$ on $\g_1$ is $[A, B]=AB+BA^*$, which corresponds to the standard action of $\mathfrak{sl}(2,\C)_{\R}={\mathfrak{so}}(1,3)$ on $\R^{1,3}$. This fact yields structure relations for ${\mathfrak p}_2$ as shown below. 

Let $\{X_k, X_k^i, R\}_{k=1,2,3}$, and $\{V_{\ell}\}_{\ell=1,2,3,4}$ be the following bases in $\mathfrak{sl}(2,\C)_{\R}\oplus\R$ and the space of Hermitian $2\times 2$-matrices, respectively:
$$
\begin{array}{llll}
X_1:=\begin{pmatrix}0 & 0\\ 1 & 0\end{pmatrix},& X_1^i:=\begin{pmatrix}0 & 0\\ i & 0\end{pmatrix}, & X_2:=\begin{pmatrix}\frac12 & 0\\ 0 & -\frac12\end{pmatrix}, &
X_2^i:=\begin{pmatrix}\frac{i}2 & 0\\ 0 & -\frac{i}2\end{pmatrix},\\
\vspace{-0.1cm}\\
X_3:=\begin{pmatrix}0 & -1\\ 0 & 0\end{pmatrix}, & X_3^i:=\begin{pmatrix}0 & -i\\ 0 & 0\end{pmatrix}, & R:=\begin{pmatrix}\frac12 & 0\\ 0 & \frac12\end{pmatrix},&
V_1:=\begin{pmatrix}1 & 0\\ 0 & 0\end{pmatrix},\\
\vspace{-0.1cm}\\
V_2:=\begin{pmatrix}0 & 1\\ 1 & 0\end{pmatrix},&
V_3:=\begin{pmatrix}0 & i\\ -i & 0\end{pmatrix},&
V_4:=\begin{pmatrix}0 & 0\\ 0 & 1\end{pmatrix}.&
\end{array}
$$
The nontrivial commutators among the above elements are: for the basis $\{X_k\}_{k=1,2,3}$ of $\mathfrak{sl}(2,\R)\subset\mathfrak{sl}(2,\C)_{\R}$ we have $[X_1,X_2]=X_1$, $[X_1,X_3]=2X_2$, $[X_2,X_3]=X_3$ (and the commutators involving the superscript $i$ are the obvious consequences); next, we see that $[R, V_{\ell}]=V_{\ell}$ for all $\ell$; finally, for the representation of $\mathfrak{sl}(2,\C)_{\R}$ on $\g_1$ we calculate
\begin{equation}
\hspace{0.8cm}\makebox[250pt]{$\begin{array}{llll}
[X_1, V_1]=V_2,& [X_1,V_2]=2V_4,& [X_1^i, V_1]=-V_3,& [X_1^i, V_3]=-2V_4,\\
\vspace{-0.3cm}\\

[X_2, V_1] =V_1,& [X_2, V_4]=-V_4,& [X_2^i, V_2]=V_3,& [X_2^i, V_3]=-V_2,\\
\vspace{-0.3cm}\\

[X_3, V_2] =-2V_1,& [X_3, V_4]=-V_2,& [X_3^i, V_3]=-2V_1,& [X_3^i, V_4]=-V_3.
\end{array}$}\label{relations8888}
\end{equation}

\subsection{The examples}\label{main21}

Denote by $z=x+iy$, $w=u+iv$, $t=\tau+i\sigma$ the  coordinates in $\C^3$ and for every $n=1,2,\dots$, let $M_n\subset\C^3$ be the hypersurface defined for $-\pi/2<v<\pi/2$ by the equation
\begin{equation}
\sigma=\tau\tan\left(\frac{1}{n}\tan^{-1}\frac{e^u\sin v-2y}{e^u\cos v}\right).\label{eqexamp}
\end{equation}
Notice that $M_n$ is Levi-degenerate precisely at the points of the complex hypersurface ${\mathfrak S}:=\{-\pi/2<v<\pi/2,t=0\}=M_n\cap\{\tau=0\}$ and therefore, by Theorem \ref{main1}, we have $\dim\mathfrak{s}\le 11$. Clearly, $M_n$ is not minimal, hence not of finite type (in the sense of Kohn and Bloom-Graham), at any point of ${\mathfrak S}$ (see \cite[\S 1.5]{BER}).

The complement $M_n\setminus{\mathfrak S}$ has exactly two connected components. They are defined by the sign of $\tau$ and we call them $M_n^{+}$ and $M_n^{-}$, respectively. A short computation shows that $M_n\setminus{\mathfrak S}$ is given by
$$
\Im\left(\frac{e^w}{t^n}\right)=2\Re\left(\frac{1}{t^n}\right)y,\quad t\ne 0.
$$
It is then easy to see that near every point in $M_n\setminus{\mathfrak S}$ the holomorphic map
$$
z\mapsto -i z,\quad w\mapsto -\frac{ie^w}{2t^n},\quad t\mapsto\frac{1}{t^n}
$$
is a CR-diffeomorphism onto an open subset of the quadric
$$
\Re(w)=\Re(z)\Re(t)
$$
(cf.~(\ref{QK})). Hence, $M_n\setminus{\mathfrak S}$ is spherical with Levi form of signature $(1,1)$.

We are now ready to state and prove the second main theorem of the paper.  

\begin{theorem}\label{main2} For every $n=1,2,\dots$ the symmetry algebra of $M_n$ is {\rm 11}-dimensional; in fact one has $\mathfrak{s}=\mathfrak{p}_2$. Furthermore, if $p_1, p_2\in {\mathfrak S}$, then for $n\ne k$ the germs of the hypersurfaces $M_n$, $M_k$ at $p_1$, $p_2$, respectively, are not equivalent by means of a smooth CR-diffeomorphism. In particular, $M_n$, $M_k$ are not CR-equivalent even as smooth CR-manifolds.
\end{theorem}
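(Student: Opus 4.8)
The plan is to split the statement into two parts: (i) computing $\mathfrak{s}(M_n)=\mathfrak{p}_2$ (in particular $\dim\mathfrak{s}(M_n)=11$), and (ii) establishing the nonequivalence of the germs of $M_n$ and $M_k$ at points of $\mathfrak{S}$ for $n\ne k$. For part (i), since Theorem \ref{main1} already gives $\dim\mathfrak{s}\le 11$ (as $M_n$ is Levi-degenerate somewhere and not spherical everywhere), it suffices to exhibit an $11$-dimensional subalgebra of $\hol(M_n)$ and identify it with $\mathfrak{p}_2$. First I would produce an explicit basis of holomorphic vector fields on $\C^3$ tangent to $M_n$. The natural candidates come from the observation that on $M_n\setminus\mathfrak{S}$ the defining equation reads $\Im(e^w/t^n)=2\Re(1/t^n)y$, which is affine-linear in the ``coordinates'' $\zeta:=z$, $\eta:=e^w/t^n$, $\vartheta:=1/t^n$ modulo the quadric relation; more precisely the map $z\mapsto -iz$, $w\mapsto -ie^w/(2t^n)$, $t\mapsto t^{-n}$ conjugates $M_n\setminus\mathfrak{S}$ to (an open piece of) $\Re(w)=\Re(z)\Re(t)$, whose symmetry algebra is $\mathfrak{su}(2,2)$. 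I would pull back the $\mathfrak{p}_2$-part of $\mathfrak{su}(2,2)$ — the affine symmetries of the quadric that preserve the relevant degeneration locus $\{t=0\}$, i.e. the conjugate of $\mathfrak{co}(1,3)\oright\R^{1,3}$ — under this (multivalued) map and check that the resulting $11$ vector fields are in fact single-valued, holomorphic across $\mathfrak{S}$, and tangent to $M_n$ there. The structure relations \eqref{actionong-1}, \eqref{relations8888} then let me match the span with $\mathfrak{p}_2$ on the nose. Since $M_n$ is connected and real-analytic, $\hol(M_n)=\hol(M_n,p)$ for any $p$, so the bound from Theorem \ref{main1} forces equality $\mathfrak{s}=\mathfrak{p}_2$.

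For part (ii), the idea is to extract a CR-invariant of the germ $(M_n,p_0)$ at a point $p_0\in\mathfrak{S}$ that detects $n$. The parameter $n$ enters \eqref{eqexamp} through $\tfrac1n\tan^{-1}(\cdot)$, so it governs the \emph{order of vanishing} / the precise way $M_n$ degenerates along $\mathfrak{S}$ — essentially the rate at which $M_n$ approaches its nonminimal locus. Concretely, $\mathfrak{S}$ is the set of points through which $M_n$ is not of finite type, and it is intrinsically defined (the nonminimal locus is a CR-invariant subset). A CR-diffeomorphism germ $\Phi$ carrying $(M_n,p_1)$ to $(M_k,p_2)$ must carry $\mathfrak{S}\subset M_n$ to $\mathfrak{S}\subset M_k$. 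Extending $\Phi$ to a biholomorphism of ambient neighborhoods (legitimate by real-analyticity of everything in sight, cf.\ the discussion before Theorem \ref{main1}) and using that the ambient holomorphic hypersurface $\{t=0\}$ through $\mathfrak{S}$ is mapped to itself, I would track the Segre-family / jet data of $M_n$ transverse to $\{t=0\}$. The cleanest route is probably to note that along $M_n^+$ the hypersurface is spherical, but the way its Chern–Moser / Cartan data blows up as one approaches $\mathfrak{S}$ is governed by $n$: after the change to the quadric model $\Re(w)=\Re(z)\Re(t)$, the transition involves $t\mapsto t^{-n}$, and the ramification index $n$ of this covering map at $t=0$ is a biholomorphic invariant of the pair $(M_n,\{t=0\})$. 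I would phrase this as: a biholomorphism matching the two configurations would conjugate $t\mapsto t^{-n}$-type monodromy to $t\mapsto t^{-k}$-type monodromy around $\mathfrak{S}$, forcing $n=k$.

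The main obstacle I anticipate is part (ii) — making the ``$n$ is the ramification index'' intuition into a rigorous CR-invariant argument. The subtlety is that $\mathfrak{S}$ is a hypersurface (real codimension $2$ in $M_n$, complex codimension $1$ in $\C^3$) of \emph{degenerate} points, so one cannot directly invoke Chern–Moser normal-form uniqueness there; one must work with the germ of the ambient pair $(\C^3,\{t=0\},M_n)$ and show that the CR-structure on the spherical side $M_n^\pm$ determines, near $\mathfrak{S}$, a multivalued ``developing map'' to the quadric whose branching over $\mathfrak{S}$ has order exactly $n$. An alternative, perhaps more robust, strategy is to use the explicit symmetry algebra: although $\mathfrak{s}(M_n)\cong\mathfrak{s}(M_k)\cong\mathfrak{p}_2$ abstractly, the \emph{local orbit structure} of the corresponding $\mathrm{CO}(1,3)\oright\R^{1,3}$-action near $\mathfrak{S}$ — or the way the $11$ explicit vector fields degenerate on $\mathfrak{S}$, e.g.\ which combinations vanish to which order — carries the invariant $n$. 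I would present whichever of these two the authors' calculations make cleanest; in a plan of this form I would set up the ambient-biholomorphism reduction first, reduce to a statement about the map $t\mapsto t^{-n}$, and then invoke that ramification order is biholomorphically invariant to conclude.
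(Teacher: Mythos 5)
Your part (i) is essentially the paper's approach: the paper simply writes down the eleven holomorphic vector fields realizing $\mathfrak{p}_2$ (formula (\ref{repres})) and verifies tangency to $M_n$ by direct calculation; pulling back the appropriate copy of $\mathfrak{p}_2\subset\mathfrak{su}(2,2)$ through the map to the quadric is a legitimate way to discover these fields, provided you actually carry out the verification that they are single-valued and holomorphic across $\{t=0\}$ and tangent there --- that verification is the content, not a formality.

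Part (ii) as you primarily propose it has a genuine gap. The theorem asserts nonequivalence under \emph{smooth} CR-diffeomorphisms, and the germs are taken at points of ${\mathfrak S}$, where $M_n$ is Levi-degenerate and nonminimal. You cannot ``extend $\Phi$ to a biholomorphism of ambient neighborhoods by real-analyticity of everything in sight'': the hypersurfaces are real-analytic but the map is only assumed smooth, and precisely at nonminimal points no reflection-principle argument is available to upgrade smoothness to analyticity. Without an ambient holomorphic extension, the monodromy/ramification-index argument has nothing to act on. Your alternative route --- reading $n$ off the symmetry algebra --- is the one the paper takes, but it requires two ingredients you leave unspecified. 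First, a lemma asserting $\mathfrak{aut}(M_n,p)=\hol(M_n)$ for $p\in{\mathfrak S}$: a smooth infinitesimal CR-automorphism germ at $p$ restricts on each spherical side ${\mathcal U}^{\pm}$ to an element of $\mathfrak{su}(2,2)$, and the elements outside the distinguished copy of $\mathfrak{p}_2$ correspond to vector fields whose coefficients have orders $t^{-n}$ and $t^{-(n+1)}$, hence cannot be smooth across $\{t=0\}$; therefore every smooth germ lies in the explicit $\mathfrak{p}_2$. This is what allows a merely smooth CR-diffeomorphism germ to induce an automorphism of $\mathfrak{p}_2$. Second, the concrete invariant: since $\left<V_1,V_2,V_3,V_4\right>$ is the commutant of the radical of $\mathfrak{p}_2$, every automorphism of $\mathfrak{p}_2$ fixes $R$, so $F^*$ must carry $2\Re\bigl(\tfrac{t}{k}\tfrac{\partial}{\partial t}\bigr)\big|_{M_k}$ to $2\Re\bigl(\tfrac{t}{n}\tfrac{\partial}{\partial t}\bigr)\big|_{M_n}$; the spectrum of the linearization at the origin, $(0,0,0,0,1/k)$ versus $(0,0,0,0,1/n)$, is preserved up to permutation, forcing $n=k$. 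Your phrase ``which combinations vanish to which order carries the invariant $n$'' points in the right direction but must be sharpened to exactly this eigenvalue computation to close the argument.
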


\begin{proof} First, for every $n$ we explicitly write a faithful representation by holomorphic vector fields on $\C^3$ of the algebra $\mathfrak{p}_2$, where for $X\in\mathfrak{p}_2$ the corresponding vector field is denoted by $\vf(X)$:   
\begin{equation}
\begin{array}{l}
\displaystyle\vf(X_1):=\frac{\partial}{\partial z},\quad
\displaystyle \vf(X_2):=z\frac{\partial}{\partial z}+\frac{\partial}{\partial w},\\
\vspace{-0.3cm}\\ 
\displaystyle \vf(X_3):=z^2\frac{\partial}{\partial z}+(2z-e^w)\frac{\partial}{\partial w}-e^w\frac{t}{n} \frac{\partial}{\partial t},\quad 
\displaystyle \vf(X_1^i):=i\frac{\partial}{\partial z}+2i e^{-w}\frac{\partial}{\partial w},\\
\vspace{-0.3cm}\\ 
\displaystyle \vf(X_2^i):=iz\frac{\partial}{\partial z}+i(2z e^{-w}-1)\frac{\partial}{\partial w}-i\frac{t}{n}\frac{\partial}{\partial t},\\ 
\vspace{-0.3cm}\\ 
\displaystyle \vf(X_3^i):=iz^2 \frac{\partial}{\partial z}+i(e^w+2z^2 e^{-w}-2z)\frac{\partial}{\partial w}+i(e^w-2z)\frac{t}{n}\frac{\partial}{\partial t},\\
\vspace{-0.3cm}\\ 
\displaystyle \vf(R):=\frac{t}{n}\frac{\partial}{\partial t},\quad
\displaystyle \vf(V_1):=zt^n(ze^{-w}-1)\frac{\partial}{\partial w}-z\frac{t^{n+1}}{n}\frac{\partial}{\partial t},\\
\vspace{-0.3cm}\\ 
\displaystyle \vf(V_2):=t^n(2ze^{-w}-1)\frac{\partial}{\partial w}-\frac{t^{n+1}}{n}\frac{\partial}{\partial t},\\
\vspace{-0.3cm}\\ 
\displaystyle \vf(V_3):=it^n\frac{\partial}{\partial w}+i\frac{t^{n+1}}{n}\frac{\partial}{\partial t},\quad
\displaystyle \vf(V_4):= t^n e^{-w} \frac{\partial}{\partial w}. 
\end{array}\label{repres}
\end{equation}
A straightforward (albeit tedious) calculation now shows that the real parts of the holomorphic vector fields in (\ref{repres}) are indeed tangent to $M_n$, hence $\mathfrak{s}=\mathfrak{p}_2$. Similarly, for any connected neighborhood ${\mathcal U}$ in $M_n$ of a point $p\in{\mathfrak S}$ we have $\hol({\mathcal U})=\mathfrak{p}_2$. 

To prove the second statement of the theorem we need the following lemma.

\begin{lemma}\label{analyticity}
For all $n$ and $p\in{\mathfrak S}$ we have $\mathfrak{aut}(M_n,p)=\hol(M_n)$.
\end{lemma}

\begin{proof} Fix a neighborhood ${\mathcal U}$ of $p$ in $M$ such that ${\mathcal U}^{+}:={\mathcal U}\cap M_n^{+}$ and ${\mathcal U}^{-}:={\mathcal U}\cap M_n^{-}$ are connected. As ${\mathcal U}^{\pm}$ is spherical with Levi form of signature $(1,1)$ and $M_n^{\pm}$ is simply-connected, we have $\mathfrak{aut}({\mathcal U}^{\pm})=\hol({\mathcal U}^{\pm})=\mathfrak{su}(2,2)$. On the other hand, we have $\hol({\mathcal U})=\mathfrak{p}_2$. Hence, there is a copy of $\mathfrak{p}_2$ in $\mathfrak{su}(2,2)$ (conjugate by some  element $g^{\pm}\in\SU(2,2)$ to the copy given by upper-triangular matrices as in Section \ref{algp2}) that under the isomorphism $\mathfrak{su}(2,2)\cong\hol({\mathcal U}^{\pm})$ is mapped into the subalgebra $\mathfrak{a}^{\pm}\subset\hol({\mathcal U}^{\pm})$ spanned by the holomorphic vector fields in the right-hand side of (\ref{repres}) restricted to either $\{\tau>0\}$ or $\{\tau<0\}$, respectively. Under this isomorphism, by relation (\ref{actionong-1}) every element $U\in g^{\pm}\,\mathfrak{g}_{-1}\,(g^{\pm})^{-1}$ (where $\mathfrak{g}_{-1}$ is given by lower-triangular matrices) is mapped to a holomorphic vector field $X_U$ defined near ${\mathcal U}^{\pm}$ such that
$$
[\vf(R),X_U]=-X_U,
$$
which implies that $X_U$ has the form
\begin{equation}
X_{U}=t^{-n}A(z,w)\frac{\partial}{\partial z}+ t^{-n}B(z,w)\frac{\partial}{\partial w}+ t^{-(n+1)}C(z,w)\frac{\partial}{\partial t},\label{specform1}
\end{equation}
where $A$, $B$, $C$ are holomorphic functions.

Now, fix a vector field $X$ representing an element of $\mathfrak{aut}(M_n,p)$ and show that $X\in\hol(M_n)$. Choose a neighborhood ${\mathcal U}$ of $p$ in $M$ as above in which $X$ is defined and consider the restrictions $X^{\pm}:=X|_{{\mathcal U}^{\pm}}\in\mathfrak{h}({\mathcal U}^{\pm})$. Then $X^{\pm}$ is the sum of an element in $\mathfrak{a}^{\pm}$ and the real part of a vector field of the form (\ref{specform1}) restricted to ${\mathcal U}^{\pm}$. As $X$ is smooth, we then see that $X^{\pm}\in\mathfrak{a}^{\pm}$, and it follows from (\ref{repres}) that $X^{+}$ and $X^{-}$ glue together into an element of $\hol(M_n)$. Hence $\mathfrak{aut}(M_n,p)=\hol(M_n)$ as required.\end{proof}

We will now prove the second statement of the theorem. First of all, it follows from (\ref{repres}) that $\SL(2,\C)_{\R}$ acts real-analytically on $M_n$ for every $n$ and that ${\mathfrak S}$ is an orbit of this action. In fact, it is not hard to see that the real parts of the vector fields $\vf(X_1)$, $\vf(X_2)$, $\vf(X_1^i)$, $\vf(X_2^i)$ generate 1-parameter subgroups of the group of (global) real-analytic CR-automorphisms of $M_n$ and for every pair of points in ${\mathfrak S}$ there exists a composition of elements of these subgroups mapping one point into the other. Therefore, it suffices to show that the germs of $M_n$, $M_k$ at the origin are not smoothly CR-equivalent if $n\ne k$.  

Assume the opposite and let $F:(M_n,0)\to (M_k,0)$ be the germ of a smooth CR-isomorphism at the origin. We then obtain an isomorphism between the Lie algebras $\mathfrak{aut}(M_n,0)$, $\mathfrak{aut}(M_k,0)$, hence, by Lemma \ref{analyticity}, an isomorphism between $\hol(M_n)$, $\hol(M_k)$, and therefore an automorphism of $\mathfrak{p}_2$. Notice that $\left<V_1,V_2,V_3,V_4\right>$ is the commutant of the radical subalgebra $\left<R,V_1,V_2,V_3,V_4\right>$ of ${\mathfrak p}_2$, which implies that any automorphism of ${\mathfrak p}_2$ takes $R$ to itself. It then follows that  
$$
F^*\left(2\Re\left(\frac{t}{k}\frac{\partial}{\partial t}\right)\Big|_{M_k}\right)=2\Re\left(\frac{t}{n}\frac{\partial}{\partial t}\right)\Big|_{M_n},
$$
where the vector fields are identified with their germs at the origin. This is, however, impossible as the spectrum of $2\Re\left(\frac{t}{k}\frac{\partial}{\partial t}\right)|_{M_k}$ at the origin is $(0,0,0,0,1/k)$ and must be preserved by $F^*$ up to a permutation. This completes the proof.\end{proof}

From Theorems \ref{main1}, \ref{main2} we now obtain the value of the submaximal dimension of the symmetry algebra $\mathfrak{s}$:

\begin{corollary}\label{smaxdim}
For the class of real-analytic connected holomorphically nondegenerate {\rm 5}-dimensional CR-hypersurfaces one has $D_{\hbox{\tiny\rm smax}}=11$.
\end{corollary}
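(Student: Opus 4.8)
The plan is to read the corollary directly off the two main theorems, so that the only genuine content is unwinding the definition of $D_{\hbox{\tiny\rm smax}}$ and checking that the examples $M_n$ belong to the class under consideration. Recall that, by definition, $D_{\hbox{\tiny\rm smax}}$ is the largest value of $\dim\mathfrak{s}(M)$, taken over all real-analytic connected holomorphically nondegenerate $5$-dimensional CR-hypersurfaces $M$, that is strictly smaller than $D_{\hbox{\tiny\rm max}}=15$. Thus I need an upper bound ruling out the values $12,13,14$, together with a realization of the value $11$.

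For the upper bound I would invoke Theorem \ref{main1}: for any $M$ in the class one has either $\dim\mathfrak{s}(M)=15$ or $\dim\mathfrak{s}(M)\le 11$. Consequently none of $12$, $13$, $14$ occurs as a symmetry dimension, the interval $(11,15)$ is a lacuna, and $D_{\hbox{\tiny\rm smax}}\le 11$. For the realization I would use Theorem \ref{main2}, which produces, for every $n=1,2,\dots$, a hypersurface $M_n$ with $\dim\mathfrak{s}(M_n)=11$ (in fact $\mathfrak{s}=\mathfrak{p}_2$). The one point worth spelling out is that each $M_n$ genuinely lies in the class, i.e. is holomorphically nondegenerate: this is immediate since $M_n\setminus{\mathfrak S}$ is Levi-nondegenerate — indeed spherical with Levi form of signature $(1,1)$ — on a dense open subset, and a real-analytic hypersurface possessing a point of Levi-nondegeneracy is holomorphically nondegenerate (equivalently, $\hol(M_n)$ is finite-dimensional, which we already know because it equals $\mathfrak{p}_2$). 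Hence $D_{\hbox{\tiny\rm smax}}\ge 11$, and combining the two inequalities gives $D_{\hbox{\tiny\rm smax}}=11$.

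Since all the substance is already contained in Theorems \ref{main1} and \ref{main2}, there is no real obstacle to overcome here; the statement is purely a bookkeeping consequence. If desired, one could add the remark that Theorem \ref{main1} in fact identifies $(11,15)$ as the \emph{first} gap, which is precisely what promotes $11$ from being merely a realized dimension to being the submaximal one.
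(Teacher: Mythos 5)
Your proposal is correct and matches the paper's argument exactly: the corollary is stated there as an immediate consequence of Theorems \ref{main1} and \ref{main2}, with the upper bound $D_{\hbox{\tiny\rm smax}}\le 11$ coming from the gap in Theorem \ref{main1} and the realization from the hypersurfaces $M_n$. Your extra remark that each $M_n$ is holomorphically nondegenerate because it has Levi-nondegenerate points is a sensible piece of bookkeeping that the paper handles implicitly when introducing the examples.
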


\noindent Next, recalling the result of \cite{KS} stated in the introduction (see also \cite{IK}), we observe that it has the following analogue for $n=2$:

\begin{corollary}\label{dimconjnew} For a real-analytic connected holomorphically nondegenerate {\rm 5}-dimensional CR-hypersurface $M$ and a point $p\in M$, the condition $\dim\hol(M,p)>11$ implies that $M$ is spherical near $p$, and this estimate is sharp.
\end{corollary}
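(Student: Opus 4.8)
The plan is to deduce Corollary \ref{dimconjnew} from Theorem \ref{main1} together with the examples of Theorem \ref{main2}, in close parallel with how the $n=1$ case is handled in \cite{IK}. First I would observe that the statement splits into two parts: the implication ``$\dim\hol(M,p)>11\Rightarrow M$ spherical near $p$'' and the sharpness, i.e.\ that $11$ cannot be lowered. For the sharpness part I would simply invoke Theorem \ref{main2}: for the hypersurfaces $M_n$ and any point $p\in\mathfrak S$, a connected neighborhood $\mathcal U$ of $p$ satisfies $\hol(\mathcal U)=\mathfrak p_2$, so $\dim\hol(M_n,p)=11$ while $M_n$ is Levi-degenerate, hence not spherical, at $p$. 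This gives a non-spherical hypersurface with the bound $11$ attained, so no smaller bound works.

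The main implication is the substantive part. The key point is to reduce the germ-wise statement to the global statement of Theorem \ref{main1}. Fix $p\in M$ with $\dim\hol(M,p)>11$. As recalled in the text just before Theorem \ref{main1}, holomorphic nondegeneracy guarantees a connected neighborhood $U$ of $p$ in $M$ for which $\hol(U)\to\hol(M,p)$ is surjective, and moreover $\hol(M,p)=\hol(U,p)=\hol(U)$. Thus $\hol(U)$ is a symmetry algebra of the connected real-analytic holomorphically nondegenerate $5$-dimensional CR-hypersurface $U$ with $\dim\hol(U)>11$. Theorem \ref{main1} then forces $\dim\hol(U)=15$ and $U$ spherical (everywhere, with Levi form of signature $(2,0)$ or $(1,1)$). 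In particular $U$ is spherical at $p$, which is precisely what ``$M$ spherical near $p$'' means. The only subtlety to check is that $U$ can be chosen holomorphically nondegenerate: since $M$ is holomorphically nondegenerate, every open subset of it is too (the condition is local, asserting the nonexistence of a holomorphic vector field tangent to $M$ near each point), so this is immediate.

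I do not expect a genuine obstacle here; the corollary is a packaging of Theorem \ref{main1} plus Theorem \ref{main2}. If anything, the one place that deserves a sentence of care is the passage from the germ algebra $\hol(M,p)$ to a symmetry algebra $\hol(U)$ of an honest hypersurface, so that Theorem \ref{main1} applies --- but this is exactly the content of the remark (following \cite[Proposition 12.5.1]{BER} and \cite[Corollary 12.5.5]{BER}) quoted in Section \ref{first}, namely that for holomorphically nondegenerate $M$ one may shrink to $U$ with $\hol(M,p)=\hol(U)$. With that in hand the proof is a two-line citation, and the sharpness follows from the $M_n$ of Theorem \ref{main2}.

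\begin{proof} If $\dim\hol(M,p)>11$, choose a connected neighborhood $U$ of $p$ in $M$ with $\hol(M,p)=\hol(U,p)=\hol(U)$, which exists by the discussion in Section \ref{first}. Then $U$ is a real-analytic connected holomorphically nondegenerate $5$-dimensional CR-hypersurface with $\dim\hol(U)>11$, so by Theorem \ref{main1} we have $\dim\hol(U)=15$ and $U$ is spherical. In particular $M$ is spherical near $p$. Sharpness follows from Theorem \ref{main2}: for any $n$ and any $p\in{\mathfrak S}\subset M_n$ one has $\dim\hol(M_n,p)=\dim\mathfrak{p}_2=11$, yet $M_n$ is Levi-degenerate, hence nonspherical, at $p$. \end{proof}
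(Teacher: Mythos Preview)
Your proof is correct and matches the paper's approach: the paper's own proof only treats sharpness (citing $\hol(M_n,p)=\mathfrak{p}_2$ from the proof of Theorem \ref{main2}), implicitly taking the main implication as immediate from Theorem \ref{main1} via exactly the reduction $\hol(M,p)=\hol(U)$ you spell out. You have simply made explicit the step the paper leaves to the reader.
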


\begin{proof} We only need to prove the sharpness of the estimate $\dim\hol(M,p)>11$, which is a consequence of the fact, noted in the proof of Theorem \ref{main2}, that for any point $p\in{\mathfrak S}$ and all $n$ one has $\hol(M_n,p)=\mathfrak{p}_2$. 
\end{proof}

\section{Background for Theorem \ref{main2}}\label{fourdimorbit}
\setcounter{equation}{0}

In this section we explain how the examples from Theorem \ref{main2} were constructed. They are not a product of mere guesswork; rather, we searched for such examples in a systematic way. Our search strategy is summarized in Theorem \ref{examplegeneral} stated at the end of the section, which is a result of independent interest. 

Suppose that $M$ is a CR-hypersurface with $\mathfrak{s}=\mathfrak{p}_2$ as in Theorem \ref{main1}. We looked for potential examples assuming the existence of a local $P$-orbit of positive codimension in $M$, where the group $P:=(\SL(2,\C)_{\R}\times\RR)\ltimes\R^4$ has $\mathfrak{p}_2$ as its Lie algebra. For instance, such an orbit exists if $M$ is simply-connected. Indeed, otherwise $M$ would be spherical with Levi form of signature $(1,1)$, and the simply-connectedness of $M$ would then imply that $\mathfrak{s}=\mathfrak{su}(2,2)$.

Fix a point $p_0$ in a positive-codimensional $P$-orbit. Our arguments are based on considering the local orbit of $p_0$, say $\Sigma$, under the induced local action of the group $\SL(2,\C)_{\R}\subset P$. Since $\Sigma$ has positive codimension in $M$ as well, the isotropy subalgebra of $p_0$ under the $\SL(2,\C)_{\R}$-action has dimension at least 2. As we will see in Proposition \ref{complexstructure} at the end of this section, $\Sigma$ is far from being arbitrary; in fact it is either a complex curve or a complex surface in $M$. 

\subsection{Subalgebras of $\mathfrak{sl}(2,\C)_{\R}$ and local $\SL(2,\C)_{\R}$-orbits}\label{subalgorbitsclass}

\begin{proposition}\label{subalgebras}
Every subalgebra of $\mathfrak{sl}(2,\C)_{\R}$ of dimension $2\leq d<6$ is conjugate to one of 
\begin{itemize}
\item[$d=4$:] $\mathfrak{b}_{\R}$ where $\mathfrak{b}$ is the Borel subalgebra $\left\{\left(\begin{array}{cc} b & a\\ 0 & -b\end{array}\right): a,b\in\C\right\}$;\vspace{0.1cm}\\
\item[$d=3$:] $\mathfrak{sl}(2,\R)$, $\mathfrak{su}(2)$, and
$\R^1(\phi)\oright\R^2=
\left\{\left(\begin{array}{cc} b\,e^{i\phi} & a\\ 0 & -b\,e^{i\phi}\end{array}\right): a\in\C,b\in\R\right\}
\subset\mathfrak{b}_{\R}$ for some $\phi\in\R$;
\vspace{0.1cm}\\
\item[$d=2$:] $\mathfrak{c}_{\R}$ where $\mathfrak{c}$ is the Cartan subalgebra $\{\op{diag}(a,-a):a\in\C\}$, $\mathfrak{n}_{\R}$ where\linebreak $\mathfrak{n}$ is the Abelian subalgebra $\left\{\left(\begin{array}{cc} 0 & a\\ 0 & 0\end{array}\right): a\in\C\right\}$, and the solvable
subalgebra ${\mathfrak{sol}}_2:=\left\langle \left(\begin{array}{cc} 1 & 0\\ 0 & -1\end{array}\right),
\left(\begin{array}{cc} 0 & 1\\ 0 & 0\end{array}\right)\right\rangle$.
\end{itemize}
\end{proposition}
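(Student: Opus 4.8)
The statement is a classification of subalgebras $\h\subset\g:=\mathfrak{sl}(2,\C)_\R$ with $2\le\dim\h<6$, so the natural approach is to organize the argument by dimension $d=\dim\h\in\{2,3,4\}$, and within each dimension to use structural facts about $\g$: it is a simple real Lie algebra of rank $2$ (viewed over $\R$), its complexification is $\mathfrak{sl}(2,\C)\oplus\mathfrak{sl}(2,\C)$, and it carries a distinguished invariant complex structure $J$ (multiplication by $i$ on $\mathfrak{sl}(2,\C)$). The first thing I would do is fix a Cartan subalgebra and the standard $\mathfrak{sl}(2,\C)$-triple $\{e,h,f\}$, so that every element of $\g$ is a $\C$-linear combination of $e,h,f$, and recall the exponentiated picture: conjugation by $\SL(2,\C)$ acts on subalgebras, and I am free to normalize.

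\smallskip
\noindent\textbf{Step 1 ($d=4$).} A $4$-dimensional subalgebra has codimension $2$. I would first argue it cannot be semisimple (no $4$-dimensional semisimple real Lie algebra exists) and cannot be nilpotent or abelian of that dimension inside $\g$ (the maximal abelian subalgebras are $2$-dimensional). So $\h$ is solvable with a nontrivial Levi-type/semisimple element; passing to the complexification $\h\otimes\C\subset\mathfrak{sl}(2,\C)\oplus\mathfrak{sl}(2,\C)$, which is then a solvable subalgebra of (complex) dimension $4$, and using that the Borel of each $\mathfrak{sl}(2,\C)$ factor is $2$-dimensional, one sees $\h\otimes\C$ must be (conjugate to) the sum of the two Borels, i.e.\ $\h\otimes\C=\mathfrak{b}\oplus\mathfrak{b}$, which forces $\h=\mathfrak{b}_\R$. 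This is the cleanest case.

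\smallskip
\noindent\textbf{Step 2 ($d=2$).} Here I would split on whether $\h$ is abelian. If $\h$ is abelian, it is a $2$-dimensional abelian subalgebra of $\g$; such subalgebras correspond to $1$-dimensional abelian complex subalgebras of $\mathfrak{sl}(2,\C)$ that are $J$-invariant, or to totally real ones. The $J$-invariant case gives $\mathfrak{c}_\R$ (conjugating into the Cartan) or $\mathfrak{n}_\R$ (conjugating a nilpotent line); one checks that $\mathfrak{c}_\R$ and $\mathfrak{n}_\R$ exhaust $J$-invariant abelian $2$-planes up to conjugacy (diagonalizable vs.\ nilpotent). If $\h$ is nonabelian, then $\h\cong\mathfrak{aff}(1)$, so it has a basis $\{X,Y\}$ with $[X,Y]=Y$; then $Y$ is a nilpotent element (ad-nilpotent in $\h$ and hence nilpotent in $\g$, since a nonzero multiple of a semisimple element can't be the "$Y$" of such a pair inside $\mathfrak{sl}(2,\C)$ by an eigenvalue count), so after conjugation $Y$ is a real multiple of $e$, and then the bracket relation pins down $X$ modulo the centralizer of $e$, giving $\mathfrak{sol}_2$.

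\smallskip
\noindent\textbf{Step 3 ($d=3$), the main obstacle.} This is where the work is; I expect it to be the hardest case because $3$ is exactly half of $\dim\g=6$ and there is a genuine continuous family $\R^1(\phi)\oright\R^2$. I would again start from the abstract isomorphism type of a $3$-dimensional real Lie algebra: $\h$ is either semisimple ($\mathfrak{sl}(2,\R)$ or $\mathfrak{su}(2)$) or solvable. In the semisimple case one uses that both $\mathfrak{sl}(2,\R)$ and $\mathfrak{su}(2)$ are real forms of $\mathfrak{sl}(2,\C)$ and that, up to conjugation, the only embeddings into $\mathfrak{sl}(2,\C)_\R$ are the obvious ones (an embedding $\mathfrak{sl}(2,\C)\hookleftarrow\mathfrak{s}$ of a real form, together with uniqueness of real forms up to conjugacy), which is the standard $\mathfrak{sl}(2,\R)$ and the standard $\mathfrak{su}(2)$. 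In the solvable case $\h$ contains its nilradical; the derived algebra $[\h,\h]$ is a nonzero nilpotent ideal, hence after conjugation contained in the line $\R e + \R(ie)$, i.e.\ in $\mathfrak{n}_\R$; since $\dim[\h,\h]\le 2$ one shows it must be exactly the $2$-plane $\mathfrak{n}_\R=\R^2$ (it can't be $1$-dimensional, as then $\h/[\h,\h]$ would be $2$-dimensional abelian acting on a line, and one checks this cannot fit $3$-dimensionally in $\g$—an eigenvalue/normalizer argument). So $\h=(\R X)\oright\mathfrak{n}_\R$ with $X$ semisimple normalizing $\mathfrak{n}_\R=\C\!\cdot\!e$; the normalizer of $\C e$ is the Borel $\mathfrak{b}$, so $X\in\mathfrak{b}_\R$ modulo $\mathfrak{n}_\R$, hence $X$ is (a scalar multiple of) $\mathrm{diag}(b e^{i\phi},-b e^{i\phi})$ up to conjugation by the torus, and rescaling gives exactly the family $\R^1(\phi)\oright\R^2$. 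The delicate points to nail down are: (a) that the parameter $\phi$ genuinely cannot be removed by conjugation (different $\phi$ give the mismatch between the two $\mathfrak{sl}(2,\C)$-eigenvalues, an invariant), and (b) ruling out the $1$-dimensional-derived-algebra subcase, which I would handle by a direct representation-theoretic count inside $\mathfrak{sl}(2,\C)$.

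\smallskip
\noindent Throughout, the uniform tool is: complexify, classify inside $\mathfrak{sl}(2,\C)\oplus\mathfrak{sl}(2,\C)$ where everything is triangular/diagonal, then descend using the real structure (complex conjugation swapping the two factors) and the fixed complex structure $J$. The routine verifications—that the listed algebras really are subalgebras, and the explicit conjugations—I would leave as straightforward checks.
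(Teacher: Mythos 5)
Your route is genuinely different from the paper's. The paper disposes of the whole classification in a few lines by invoking Mostow's theorem, exactly as in its Propositions \ref{su13} and \ref{su22}: every maximal subalgebra of the simple algebra $\mathfrak{sl}(2,\C)_{\R}$ is parabolic ($\mathfrak{b}_{\R}$, of dimension $4$), pseudotoric ($\mathfrak{c}_{\R}$, of dimension $2$), or semisimple ($\mathfrak{sl}(2,\R)$ or $\mathfrak{su}(2)$, of dimension $3$), and the remaining entries are then read off as the codimension-one subalgebras of $\mathfrak{b}_{\R}$ together with the $2$-dimensional subalgebras sorted by their number of semisimple generators. In particular the nonexistence of $5$-dimensional subalgebras is automatic there, since no maximal subalgebra has dimension $5$. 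Your complexify-and-triangularize scheme is more elementary and self-contained, and where it is carried out it is sound: the Borel argument for $d=4$, the centralizer argument for abelian planes, and the normalizer argument in the solvable $d=3$ case all work.

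However, three points must be repaired before this is a proof. First, you never treat $d=5$, which the statement covers ($2\le d<6$): a $5$-dimensional solvable $\h$ would complexify to a $5$-dimensional solvable subalgebra of $\mathfrak{sl}(2,\C)\oplus\mathfrak{sl}(2,\C)$, exceeding the dimension $4$ of a Borel there, while a $5$-dimensional $\h$ with $3$-dimensional Levi factor $\mathfrak{s}$ is impossible because $\mathfrak{s}$ spans $\mathfrak{sl}(2,\C)$ over $\C$, so $\mathfrak{sl}(2,\C)_{\R}$ decomposes under $\mathfrak{s}$ as two copies of the adjoint representation and admits no invariant $2$-plane to serve as the radical. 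Second, in Step 1 the inference ``not semisimple and not nilpotent/abelian, hence solvable'' is false for abstract $4$-dimensional Lie algebras; you must separately exclude $\h\cong\mathfrak{sl}(2,\R)\oplus\R$ and $\h\cong\mathfrak{su}(2)\oplus\R$, which follows because a $3$-dimensional semisimple subalgebra spans $\mathfrak{sl}(2,\C)$ over $\C$ and hence has trivial centralizer. Third, in Step 2 you raise the possibility of a totally real abelian $2$-plane and never dispose of it; the fix is that any abelian $2$-plane lies in the centralizer of each of its nonzero elements, and in $\mathfrak{sl}(2,\C)$ the centralizer of a nonzero element is a complex line (the Cartan containing it if it is semisimple, $\C X$ if it is nilpotent), so every abelian $2$-plane is automatically $J$-invariant. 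All three repairs are one-liners in the spirit of your own argument, but as written these cases are open.
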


\begin{proof}
As in the proofs of Propositions \ref{su13}, \ref{su22}, we again use Mostow's theorem to list the maximal subalgebras of $\mathfrak{sl}(2,\C)_{\R}$. The only parabolic subalgebra, up to conjugation, is $\mathfrak{b}_{\R}$. Its every codimension 1 subalgebra is conjugate to $\R^1(\phi)\oright\R^2$ for some $\phi\in\R$. Furthermore, in dimension 3 one also has two simple
subalgebras of $\mathfrak{sl}(2,\C)_{\R}$, namely $\mathfrak{sl}(2,\R)$ and $\mathfrak{su}(2)$. Next, any pseudotoric subalgebra of $\mathfrak{sl}(2,\C)_{\R}$ is conjugate to $\mathfrak{c}_{\R}$. The other 2-dimensional subalgebras are given by specializing the number of semisimple generators: 1 for $\mathfrak{sol}_2$ and 0 for $\mathfrak{n}_{\R}$. \end{proof}
 
\begin{remark}\label{classification} The simple classification in Proposition \ref{subalgebras} has been rediscovered several times, especially when describing subalgebras of the Lorentz algebra $\mathfrak{so}(1,3)$ (see, e.g., \cite{PWZ} and references therein), and we only provide it here for the completeness of our exposition. To augment the above classification, we also note that all 1-dimensional subalgebras of $\mathfrak{sl}(2,\C)_{\R}$ are specified by Jordan normal forms.
 \end{remark}
 
As the following proposition shows, not all the subalgebras from Proposition \ref{subalgebras} are realizable as isotropy subalgebras in the case at hand.
 
\begin{proposition}\label{isotropysubalgebras}
The isotropy subalgebra $\h\subset\mathfrak{sl}(2,\C)_{\R}$ of the point $p_0\in\Sigma$ is conjugate to one of $\mathfrak{b}_{\R}$, $\mathfrak{c}_{\R}$, $\mathfrak{n}_{\R}$.
\end{proposition}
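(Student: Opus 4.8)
The plan is to start from the dimension constraint already in hand: since $\Sigma$ has positive codimension in the $5$-dimensional $M$, and the $\SL(2,\C)_\R$-orbit $\Sigma$ satisfies $\dim\Sigma = 6 - \dim\h$, we need $\dim\h \ge 2$, so $\h$ is one of the subalgebras listed in Proposition \ref{subalgebras} of dimension $2\le d < 6$. I would then eliminate the remaining candidates one at a time, using the crucial structural fact (established in Proposition \ref{complexstructure}, invoked here in advance) that $\Sigma$ must be a complex submanifold of $M$ --- either a complex curve ($\dim_\R\Sigma = 2$, so $\dim\h = 4$) or a complex surface ($\dim_\R\Sigma = 4$, so $\dim\h = 2$). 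This immediately kills the three-dimensional candidates $\mathfrak{sl}(2,\R)$, $\mathfrak{su}(2)$, and $\R^1(\phi)\oright\R^2$, since they would give $\dim_\R\Sigma = 3$, which is odd and hence cannot carry a complex structure. So only the $d=4$ case $\mathfrak{b}_\R$ and the $d=2$ cases $\mathfrak{c}_\R$, $\mathfrak{n}_\R$, $\mathfrak{sol}_2$ survive, and the entire content of the proposition is to rule out $\mathfrak{sol}_2$.

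The heart of the argument is therefore the exclusion of $\h = \mathfrak{sol}_2$. Here I would argue via the isotropy representation: if $\h = \mathfrak{sol}_2$ stabilizes $p_0$, then $\h$ acts linearly on $T_{p_0}\Sigma \cong \C^2$ (a $2$-dimensional complex vector space, by the complex-submanifold property), and this representation must be by $\C$-linear maps that commute with the complex structure $J$. Writing $\mathfrak{sol}_2 = \langle X_2, X_1\rangle$ in the notation $[X_2,X_1] = -X_1$ (a semisimple element and a nilpotent one with that bracket), the restriction to $T_{p_0}\Sigma$ is a $2$-dimensional complex representation of the non-abelian solvable Lie algebra $\mathfrak{sol}_2$ in which the nilpotent generator $X_1$ must act nilpotently and the semisimple generator act with eigenvalues differing by the weight of $X_1$. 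The key point to extract is that $T_{p_0}\Sigma$ together with the isotropy action is forced to be incompatible with $\mathfrak{sol}_2\subset\mathfrak{sl}(2,\C)_\R$ sitting inside $\mathfrak{p}_2$: one compares the adjoint action of $\h$ on the complement $\mathfrak{sl}(2,\C)_\R/\h$, which is the actual isotropy representation on $T_{p_0}\Sigma$, against the requirement that $J$ be preserved. For $\mathfrak{sol}_2$, the quotient $\mathfrak{sl}(2,\C)_\R/\mathfrak{sol}_2$ carries an action with real (non-complex) weights coming from the $\mathbb R$-split element $\mathrm{diag}(1,-1)$, so no invariant complex structure exists --- this is the contradiction. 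By contrast, for $\mathfrak{b}_\R$ the quotient is $2$-real-dimensional (a complex line, automatically), and for $\mathfrak{c}_\R$ and $\mathfrak{n}_\R$ the $4$-real-dimensional quotients do admit compatible complex structures, consistent with these surviving.

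I expect the main obstacle to be making precise and rigorous the claim that the isotropy representation of $\mathfrak{sol}_2$ on $T_{p_0}\Sigma$ admits no invariant complex structure, as opposed to the other three cases where one does. The subtlety is that the complex structure on $T_{p_0}\Sigma$ is inherited from the ambient CR-structure on $M$ (it is $J|_{T_{p_0}\Sigma}$, which is well-defined precisely because $\Sigma$ is a complex submanifold), and the isotropy representation is $\mathbb R$-linear by construction but must commute with this particular $J$ --- so one needs to know that every $\mathbb R$-linear representation of $\mathfrak{sol}_2$ on $\R^4$ that admits an invariant complex structure cannot be the specific one given by $\mathrm{ad}$ on $\mathfrak{sl}(2,\C)_\R/\mathfrak{sol}_2$. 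Concretely, I would diagonalize (or put into Jordan form over $\C$) the action of the semisimple generator of $\mathfrak{sol}_2$ on the complexification of $\mathfrak{sl}(2,\C)_\R/\mathfrak{sol}_2$, observe that the eigenvalues are real and not closed under complex conjugation paired up appropriately --- i.e., the $\pm$ eigenspaces are not swapped by any complex structure commuting with the action --- and conclude. Once that computation is pinned down the rest is bookkeeping: list the six candidates from Proposition \ref{subalgebras}, discard the odd-dimensional orbits via the complex-submanifold property, discard $\mathfrak{sol}_2$ by the representation-theoretic obstruction, and note that $\mathfrak{b}_\R$, $\mathfrak{c}_\R$, $\mathfrak{n}_\R$ all survive, which is exactly the assertion.
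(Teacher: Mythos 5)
Your proposal rests on a circular step. You eliminate the three-dimensional candidates and $\mathfrak{sol}_2$ by invoking Proposition \ref{complexstructure} --- the assertion that $\Sigma$ is a complex curve or a complex surface --- but that proposition is a \emph{consequence} of Proposition \ref{isotropysubalgebras} together with Lemma \ref{cartancomplexorbit} and the long case analyses of Sections \ref{caseCartan} and \ref{caseother}; it is stated at the end of Section \ref{fourdimorbit} precisely because it summarizes all of that work. At the point where Proposition \ref{isotropysubalgebras} must be proved, nothing forces $\Sigma$ to be complex: even \emph{after} the proposition is established, Lemma \ref{cartancomplexorbit} still allows $\Sigma$ to be a totally real surface when $\h$ is conjugate to $\mathfrak{b}_{\R}$, and excluding that takes the further computations of Section \ref{caseother}. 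So the parity argument (``odd real dimension cannot carry a complex structure'') is not available to you against $\mathfrak{sl}(2,\R)$, $\mathfrak{su}(2)$ and $\R^1(\phi)\oright\R^2$, and your $\mathfrak{sol}_2$ argument only contradicts a hypothesis you were not entitled to assume. You also omit the case $\h=\mathfrak{sl}(2,\C)_{\R}$, i.e.\ $\Sigma=\{p_0\}$, which is consistent both with the positive-codimension constraint and with your complex-submanifold heuristic (a point is a $0$-dimensional complex submanifold); the paper excludes it by linearizing at the fixed point, as in the last paragraph of the proof of Theorem \ref{main1}.

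What the paper uses instead is the weaker, unconditionally available fact that the ambient CR-structure equips $\Sigma$ with \emph{invariant distributions of even rank}. For $d=3$ the distribution $T(\Sigma)\cap JT(\Sigma)$ is an invariant complex line field, hence an invariant $2$-plane in the isotropy representation on $\g/\h\cong\R^3$; this is incompatible with irreducibility (for $\mathfrak{sl}(2,\R)$ and $\mathfrak{su}(2)$, where $\h+i\,\h=\g$) and with the reducible-but-indecomposable structure of the representation for $\R^1(\phi)\oright\R^2$. For $\mathfrak{sol}_2$ the weight computation you propose is indeed the right ingredient, but it plays the \emph{opposite} logical role: the absence of an invariant complex structure on $\g/\mathfrak{sol}_2$ shows that $\Sigma$ is \emph{not} complex, whence $L:=H(M)\cap T(\Sigma)$ has rank $3$ and $L\cap JL$ is an invariant complex line distribution; the contradiction is then that the isotropy representation of $\mathfrak{sol}_2$ admits no such invariant complex line. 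By itself your computation proves only ``$\Sigma$ is not a complex surface,'' which is where the paper's $\mathfrak{sol}_2$ argument starts, not where it ends. To repair the proof, replace the appeal to Proposition \ref{complexstructure} by these distribution arguments and add the fixed-point case.
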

 
\begin{proof} Let $d:=\dim\h$. For $d=6$ we have $\Sigma=\{p_0\}$, and this case is ruled out by linearizing the action of $\h=\mathfrak{sl}(2,\C)_{\R}$ and arguing as in the last paragraph in the proof of Theorem \ref{main1}.

Let $d=3$. In this case the local $\SL(2,\C)_{\R}$-orbit $\Sigma$ is 3-dimensional and has an invariant complex line distribution, namely, $T(\Sigma)\cap JT(\Sigma)$. If $\h$ is conjugate to one of the simple algebras $\mathfrak{sl}(2,\R)$, $\mathfrak{su}(2)$, then we have $\mathfrak{sl}(2,\C)_{\R}=\mathfrak{h}+i\,\mathfrak{h}$, and therefore the isotropy representation of $\h$ is irreducible contradicting the existence of an invariant rank 2 distribution. If $\h$ is conjugate to $\R^1(\phi)\oright\R^2$ for some $\phi$, then
the isotropy representation of $\h$ is reducible but not decomposable: there exists an invariant 
1-dimensional subspace but no invariant 2-dimensional subspace, which again leads to a contradiction. This rules out the case $d=3$. 

Suppose finally that $d=2$ and $\mathfrak{h}$ is conjugate to $\mathfrak{sol}_2$. In this situation, from the isotropy representation of $\h$ one immediately observes that $\Sigma$ is not complex, so as in Remark \ref{anotherproof} we let $L:=H(M)\cap T(\Sigma)$ and consider the invariant complex line distribution $L\cap JL$. However, it is easy to see that the isotropy representation of $\h$ has no invariant 2-dimensional subspaces. This completes the proof of the proposition. \end{proof}
 
For future reference, we also state the following lemma, which is obtained by a direct elementary analysis of the isotropy representation of $\h$ as above:

\begin{lemma}\label{cartancomplexorbit} Let $\h\subset\mathfrak{sl}(2,\C)_{\R}$ be the isotropy subalgebra of the point $p_0\in\Sigma$. If $\h$ is conjugate to $\mathfrak{c}_{\R}$, then $\Sigma$ is a complex surface in $M$. If $\h$ is conjugate to $\mathfrak{b}_{\R}$, then $\Sigma$ is either a complex curve or a totally real surface in $M$.
\end{lemma}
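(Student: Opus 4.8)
The plan is to analyze the isotropy representation of $\h$ on $T_{p_0}(M)$, or rather the induced representation on the normal space $N_{p_0} := T_{p_0}(M)/T_{p_0}(\Sigma)$ together with the tangent space $T_{p_0}(\Sigma)$, and track how the complex structure $J$ interacts with these. Since $\h$ is the isotropy subalgebra of $p_0$ inside $\mathfrak{sl}(2,\C)_{\R}$ acting on the orbit $\Sigma = \SL(2,\C)_{\R}/\h$, the tangent space $T_{p_0}(\Sigma)$ is identified $\h$-equivariantly with $\mathfrak{sl}(2,\C)_{\R}/\h$, and $\h$ preserves the subspace $T_{p_0}(\Sigma)\cap J\,T_{p_0}(\Sigma)$. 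So the whole question reduces to linear algebra: given that $\h$ is (conjugate to) $\mathfrak{c}_{\R}$ or $\mathfrak{b}_{\R}$, determine the possible $\h$-invariant complex structures on $\mathfrak{sl}(2,\C)_{\R}/\h$ (for the codimension-0-in-$\Sigma$ part) and, more precisely, whether the maximal complex subspace of the $\h$-module $\mathfrak{sl}(2,\C)_{\R}/\h$ is everything (complex orbit) or a proper subspace.

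First, for the case $\h \sim \mathfrak{c}_{\R}$: here $\Sigma$ is $4$-dimensional and $\mathfrak{sl}(2,\C)_{\R}/\mathfrak{c}_{\R}$ is, as a $\mathfrak{c}_{\R}$-module, the sum of the two root spaces $\mathfrak{n}_{\R}$ (for $\alpha$) and the opposite one (for $-\alpha$), each of which is a $2$-dimensional real $\mathfrak{c}_{\R}$-module on which the complex Cartan $\mathfrak{c} = \{\mathrm{diag}(a,-a)\}$ acts by a nonzero complex weight; this action commutes with a genuine complex structure (multiplication by $i$ in the root space). The point is that the only $\mathfrak{c}_{\R}$-invariant complex structures on this $4$-dimensional module are the ones that act by $\pm i$ on each root space (the two weights are distinct and non-conjugate, so no invariant subspace mixes them), and in every such case the whole of $T_{p_0}(\Sigma)$ is a complex subspace. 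Hence $T_{p_0}(\Sigma)\cap J\,T_{p_0}(\Sigma) = T_{p_0}(\Sigma)$, i.e. $\Sigma$ is a complex surface. One must also check that this forces $\Sigma$ to be an honest complex submanifold and not merely complex at $p_0$ — but homogeneity of $\Sigma$ under $\SL(2,\C)_{\R}$ propagates the property to every point, so this is automatic.

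Next, for $\h \sim \mathfrak{b}_{\R}$: now $\Sigma$ is $2$-dimensional and $\mathfrak{sl}(2,\C)_{\R}/\mathfrak{b}_{\R}$ is the $2$-dimensional real module which is, as a $\mathfrak{b}_{\R}$-module, the image of the opposite root space, with the Cartan part of $\mathfrak{b}$ acting by the nonzero complex weight $-\alpha$ (so $\mathfrak{c}_{\R}$ acts as a rotation-dilation $\mathbb{C}^\times$ on $\R^2$) while the nilpotent part acts trivially on the quotient. A $2$-dimensional real space has exactly two invariant complex structures once we know the scalars acting, namely $\pm$ the rotation by $\pi/2$; so the module either carries a $\mathfrak{b}_{\R}$-invariant complex structure matching $J$ (in which case $\Sigma$ is a complex curve) or it does not, and since the only alternative on a $2$-dimensional space is that $T_{p_0}(\Sigma)$ contains no $J$-invariant line at all, $\Sigma$ is totally real. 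Again homogeneity upgrades the pointwise statement to a statement about $\Sigma$ globally. I would phrase this by noting that $T_{p_0}(\Sigma)\cap J\,T_{p_0}(\Sigma)$ is a $\h$-submodule of a $2$-dimensional module, hence is $\{0\}$ or everything.

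The main obstacle, and where I would spend the most care, is the $\mathfrak{b}_{\R}$ case: one has to make sure there is no hidden third possibility. On a two-dimensional real $\h$-module, $T_{p_0}(\Sigma)\cap JT_{p_0}(\Sigma)$ has real dimension $0$ or $2$ purely for parity reasons (it is $J$-invariant), but one should double-check that it is genuinely an $\h$-submodule — this needs that $J$ restricted to $T_{p_0}(M)$ commutes with the $\h$-action, which holds because $\h$ acts by infinitesimal CR-automorphisms and hence its isotropy action at $p_0$ is $\C$-linear on $H_{p_0}(M)$, together with the observation that $T_{p_0}(\Sigma)\cap JT_{p_0}(\Sigma) \subset H_{p_0}(M)$. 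Given that, both clauses of the lemma follow from the elementary module-theoretic dichotomy above, and no further computation is required; the ``direct elementary analysis of the isotropy representation'' alluded to in the statement is exactly this parity-plus-submodule argument.
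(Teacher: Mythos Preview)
Your overall strategy---analyze the isotropy representation of $\h$ on $T_{p_0}(\Sigma)\cong\mathfrak{sl}(2,\C)_{\R}/\h$ and track the $\h$-invariant subspace $T_{p_0}(\Sigma)\cap J\,T_{p_0}(\Sigma)$---is exactly what the paper has in mind, and your treatment of the $\mathfrak{b}_{\R}$ case is correct: on a $2$-dimensional orbit the parity argument forces the maximal complex subspace to be $0$ or everything.

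There is, however, a genuine gap in the $\mathfrak{c}_{\R}$ case. You classify the $\mathfrak{c}_{\R}$-invariant complex structures on the abstract $4$-dimensional module $\mathfrak{n}^+_{\R}\oplus\mathfrak{n}^-_{\R}$ and conclude that ``in every such case the whole of $T_{p_0}(\Sigma)$ is a complex subspace.'' But this conflates two different questions: (a) which abstract complex structures on the module commute with $\h$, and (b) whether the particular real $4$-plane $T_{p_0}(\Sigma)\subset T_{p_0}(\mathcal M)\cong\C^3$ is invariant under the ambient $J$. Knowing (a) does not settle (b). Concretely, take the $\C$-linear representation of $\mathfrak{c}_{\R}$ on $\C^3$ with $H=\mathrm{diag}(2,2,-2)$ and $iH=\mathrm{diag}(2i,-2i,-2i)$, and set
\[
V=\mathrm{span}_{\R}\bigl\{(1,1,0),\,(i,-i,0),\,(0,0,1),\,(0,0,i)\bigr\}.
\]
Then $V$ is $\h$-invariant, isomorphic to $\mathfrak{n}^+_{\R}\oplus\mathfrak{n}^-_{\R}$ as a real $\mathfrak{c}_{\R}$-module, yet $J(1,1,0)=(i,i,0)\notin V$, so $V$ is \emph{not} $J$-invariant. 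Thus the module-theoretic classification alone cannot force $T_{p_0}(\Sigma)$ to be complex.

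What eliminates such configurations is the one piece of data you never invoke in the Cartan case: the existence of the $5$-dimensional $\h$-invariant subspace $T_{p_0}(M)$. One clean way to close the gap: from the adjoint action one has $\bigl(\tfrac12\,iH\bigr)^2=-\mathrm{id}$ on $T_{p_0}(\Sigma)$; since $iH$ acts $\C$-linearly on $T_{p_0}(\mathcal M)$, the same identity holds on the $\C$-span of $T_{p_0}(\Sigma)$. If that span were all of $\C^3$, then $iH$ would have no real eigenvalue on $\R^6$ and hence no odd-dimensional invariant real subspace---contradicting $\dim_{\R}T_{p_0}(M)=5$. Therefore the $\C$-span of $T_{p_0}(\Sigma)$ has complex dimension $2$, i.e.\ $T_{p_0}(\Sigma)$ is $J$-invariant. (In the displayed counterexample above one indeed checks $(iH)^2=-4\cdot\mathrm{id}$ on all of $\C^3$, so no $5$-dimensional invariant real hyperplane exists.) Once you insert this step, the rest of your argument goes through.
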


\subsection{The case of Cartan subalgebras}\label{caseCartan}

We will now consider in detail the situation when the isotropy subalgebra of $p_0$ under the $\SL(2,\C)_{\R}$-action is (the realification of) a Cartan subalgebra of $\mathfrak{sl}(2,\C)$. Among the three possibilities listed in Proposition \ref{isotropysubalgebras}, this is perhaps the most interesting one. In fact, as we will see in Theorem \ref{examplegeneral} below, the examples of Theorem \ref{main2} arise from this case, and the arguments presented here explain how exactly we arrived at formulas (\ref{eqexamp}), (\ref{repres}).

Without loss of generality we may suppose that the isotropy subalgebra, say $\tilde {\mathfrak c}_{\R}$, is spanned by $Y_0:=X_1+X_2$ and $Y_0^i:=X_1^i+X_2^i$ (here and below we utilize the notation and commutation relations from Section \ref{algp2} without explicit reference). By Lemma \ref{cartancomplexorbit}, the orbit $\Sigma$ is a complex surface in $M$. As $\Sigma$ is 4-dimensional, it coincides with the local $P$-orbit of $p_0$.

Consider the following Borel subgroup of $\SL(2,\C)$: 
$$
B:=\left\{\left(\begin{array}{cc} e^{b/2} & a\\ 0 & e^{-b/2}\end{array}\right)\Biggl\rvert\, a,b\in\C\right\}.
$$
Clearly, $B_{\R}$ acts simply transitively on $\Sigma$ near $p_0$. Set ${\mathbf z}:=-ae^{b/2}$ and ${\mathbf w}:=b$. The real and imaginary parts of ${\mathbf z}, {\mathbf w}$ form a local real coordinate system on $\Sigma$ centered at $p_0$. Furthermore, the pair $({\mathbf z},{\mathbf w})$ defines an $\SL(2,\C)_{\R}$-invariant complex structure on $\Sigma$, which is exactly the complex structure that comes from the quotient $\SL(2,\C)/\tilde C$ of complex Lie groups, where $\tilde C$ is the Cartan subgroup with\linebreak Lie algebra $\tilde {\mathfrak c}$. 

Let, as before, $\mathfrak{b}$ be the Lie algebra of $B$ and consider the following two elements in it: $Y_1:=-X_3$ and $Y_2:=-X_2$. With respect to the complex structure defined by $({\mathbf z},{\mathbf w})$, for the fundamental holomorphic vector fields $\hat Y_0$, $\hat Y_1$, $\hat Y_2$ on $\Sigma$ arising from $Y_0$, $Y_1$, $Y_2$, respectively, we have
\begin{equation}
\begin{array}{l}
\displaystyle\hat Y_0=\left(-{\mathbf z}-{\mathbf z}^2\right)\frac{\partial}{\partial {\mathbf z}}+\left(-1-2{\mathbf z}+e^{\mathbf w}\right)\frac{\partial}{\partial {\mathbf w}},\\
\vspace{-0.3cm}\\
\displaystyle\hat Y_1=\frac{\partial}{\partial {\mathbf z}},\quad \hat Y_2={\mathbf z}\frac{\partial}{\partial {\mathbf z}}+\frac{\partial}{\partial {\mathbf w}}.
\end{array}\label{vfstand}
\end{equation}

Identify $T_{p_{{}_0}}(\Sigma)\cong\mathfrak{sl}(2,\C)_{\R}/\tilde {\mathfrak c}_{\R}\cong\mathfrak{b}_{\R}$. It is then easy to find all $\tilde {\mathfrak c}_{\R}$-invariant complex structures on $T_{p_{{}_0}}(\Sigma)$. In the complex coordinates defined on $T_{p_{{}_0}}(\Sigma)$ by $\partial/\partial {\mathbf z}|_{(0,0)}$, $\partial/\partial {\mathbf w}|_{(0,0)}$ any such structure is given by one of the matrices
$$
\pm\left(\begin{array}{ll} i & 0\\ 0 & i\end{array}\right),\quad \pm\left(\begin{array}{rl} i & 0\\ 2i & -i\end{array}\right),
$$
each of which leads to an integrable $\SL(2,\C)_{\R}$-invariant almost complex structure on $\Sigma$. Namely, at a point $(z,w)$ we obtain, respectively,
\begin{equation}
J_1^{\pm}({\mathbf z},{\mathbf w}):=\pm\left(\begin{array}{ll} i & 0\\ 0 & i\end{array}\right),\quad J_2^{\pm}({\mathbf z},{\mathbf w}):=\pm\left(\begin{array}{cc} i & 0\\ 2ie^{-{\mathbf w}} & -i\end{array}\right).\label{complstructures}
\end{equation}
For each $k$ the complex structures arising from $J_k^{+}$, $J_k^{-}$ are conjugate to each other and all considerations for them are identical. Therefore, in what follows we will focus on $J_1^{+}$ and $J_2^{+}$. Notice that $J_1^{+}$ is the structure induced by the local complex coordinates $({\mathbf z},{\mathbf w})$ as discussed above. It is also not hard to check that $J_2^{+}$ is induced by the local complex coordinates $({\mathbf z}^*,{\mathbf w}^*):=({\mathbf z}, \ln({\mathbf z}-\bar{{\mathbf z}}+e^{\overline {\mathbf w}}))$.

Let $\widehat {Y_0}$, $\widehat {Y_1}$, $\widehat {Y_2}$, $\widehat {Y_0^i}$, $\widehat {Y_1^i}$, $\widehat {Y_2^i}$ be the fundamental holomorphic vector fields with respect to the structure $J_2^{+}$ on $\Sigma$ arising from $Y_0$, $Y_1$, $Y_2$, $Y_0^i$, $Y_1^i:=-X_3^i$, $Y_2^i:=-X_2^i$, respectively. Clearly, $\hbox{Re}\,\hat Y_j=\hbox{Re}\,\widehat {Y_j}$ for $j=0,1,2$. In the coordinates $({\mathbf z}^*,{\mathbf w}^*)$ these vector fields are written as follows:
\begin{equation}
\hspace{0.8cm}\makebox[250pt]{$\begin{array}{l}
\displaystyle\widehat{Y_0}=-\left({\mathbf z}^*+{\mathbf z}^{*2}\right)\frac{\partial}{\partial {\mathbf z}^*}+\left(-1-2{\mathbf z}^*+e^{{\mathbf w}^*}\right)\frac{\partial}{\partial {\mathbf w}^*},\\
\vspace{-0.3cm}\\
\displaystyle\widehat{Y_1}=\frac{\partial}{\partial {\mathbf z}^*},\quad \widehat Y_2={\mathbf z}^*\frac{\partial}{\partial {\mathbf z}^*}+\frac{\partial}{\partial {\mathbf w}^*},\\
\vspace{-0.3cm}\\
\displaystyle \widehat {Y_0^i}=-i\left({\mathbf z}^*+{\mathbf z}^{*2}\right)\frac{\partial}{\partial {\mathbf z}^*}-i\left(-1-2{\mathbf z}^*+e^{{\mathbf w}^*}+2{\mathbf z}^*e^{-{\mathbf w}^*}+2{\mathbf z}^{*2}e^{-{\mathbf w}^*}\right)\frac{\partial}{\partial {\mathbf w}^*},\\
\vspace{-0.3cm}\\
\displaystyle \widehat {Y_1^i}=i\frac{\partial}{\partial {\mathbf z}^*}+2ie^{-{\mathbf w}^*}\frac{\partial}{\partial {\mathbf w}^*},\quad \widehat {Y_2^i}=i{\mathbf z}^*\frac{\partial}{\partial {\mathbf z}^*}-i(1-2{\mathbf z}^*e^{-{\mathbf w}^*})\frac{\partial}{\partial {\mathbf w}^*}.
\end{array}$}\label{vfstand1}
\end{equation}

Recall now that the manifold $M$ can be regarded as a closed hypersurface in a 3-dimensional complex manifold ${\mathcal M}$. We will write the effective action of $\mathfrak{p}_2$ on $M$ as a monomorphism into the algebra of holomorphic vector fields defined in some fixed neighborhood of $M$ in ${\mathcal M}$. As above, we will use the notation $\vf(X)$ to denote the image of $X\in\mathfrak{p}_2$. 

Everywhere below all holomorphic vector fields will be written in local holomorphic coordinates $(z,w,t)$ in ${\mathcal M}$ centered at $p_0$. The coordinates can be chosen to satisfy the following two conditions. First of all, we require that $\Sigma=\{t=0\}$. Secondly, notice that since each pair of vectors $\hat Y_1(p_0)$, $\hat Y_2(p_0)$ and $\widehat{Y_1}(p_0)$, $\widehat {Y_2}(p_0)$ is complex-linearly independent and $[Y_1,Y_2]=Y_1$, one can pick $(z,w,t)$ so that
\begin{equation}
\vf(Y_1)=\frac{\partial}{\partial z},\quad \vf(Y_2)=z\frac{\partial}{\partial z}+\frac{\partial}{\partial w}\label{X1X2}
\end{equation}
(cf.~(\ref{vfstand}), (\ref{vfstand1})). 

For the structure $J_1^{+}$ one then has $(z,w)|_{\Sigma}=({\mathbf z},{\mathbf w})$ and $\vf(Y_j)|_{\Sigma}=\hat Y_j$, whereas for the structure $J_2^{+}$ one has $(z,w)|_{\Sigma}=({\mathbf z}^*,{\mathbf w}^*)$ and $\vf(Y_j)|_{\Sigma}=\widehat {Y_j}$, $j=1,2$. Furthermore, as $[Y_0,Y_1]=Y_1+2Y_2$ and $[Y_0,Y_2]=-Y_0-Y_2$, we obtain
\begin{equation}
\begin{array}{l}
\displaystyle\vf(Y_0)=\left(-z-z^2+f_1(t)e^{2w}\right)\frac{\partial}{\partial z}\\
\vspace{-0.3cm}\\
\hspace{4cm}\displaystyle+\left(-1-2z+f_2(t)e^w\right)\frac{\partial}{\partial w}+f_3(t)e^w\frac{\partial}{\partial t},
\end{array}\label{rep0}
\end{equation}
where $f_j$ are holomorphic and satisfy $f_1(0)=f_3(0)=0$, $f_2(0)=1$ (cf.~(\ref{vfstand}), (\ref{vfstand1})). We stress here that the form of the vector fields $\vf(Y_0^i)$, $\vf(Y_1^i)$, $\vf(Y_2^i)$ is determined by picking one of the two almost complex structures $J_1^{+}$, $J_2^{+}$  (cf.~(\ref{vfstand1})). 

We will now consider five cases depending on whether or not the value $f_3'(0)$ and/or the function $f_3$ is nonzero as well as on the choice of the almost complex structure $J_k^{+}$.
\vspace{-0.1cm}\\

{\bf Case 1.} Suppose that $f_3'(0)=0$ but $f_3\ne 0$. In this situation, we will make a formal (possibly divergent) change of variables to ensure that in formula (\ref{rep0}) one has $f_1=0$, $f_2=1$. The fact that the change of variables is only formal is not going to affect our arguments below. Indeed, we will use certain commutation relations to show that some vector field arising from the action of $\mathfrak{p}_2$ is zero and thus obtain a contradiction with the effectivity of the action. It is clear that this conclusion is independent of formal changes of variables.

Let us transform the variables $(z,w,t)$ as follows:
\begin{equation}
z\mapsto z+F(t)e^w,\quad w\mapsto w+G(t),\quad t\mapsto t,\label{change88}
\end{equation}
where $F$ and $G$ are formal power series with vanishing constant term. It is easy to check that such a transformation preserves the form (\ref{X1X2}) of $\vf(Y_1)$, $\vf(Y_2)$. Hence, it also preserves the form (\ref{rep0}) of $\vf(Y_0)$, and the functions $f_j$ can be shown to change as
\begin{equation}
\begin{array}{l}
f_1\mapsto f_1^*:=(f_1+Ff_2+F'f_3+F^2)e^{-2G},\\
\vspace{-0.3cm}\\
f_2\mapsto f_2^*:=(f_2+G'f_3+2F)e^{-G},\\
\vspace{-0.3cm}\\
f_3\mapsto f_3^*:=f_3e^{-G}.
\end{array}\label{change888}
\end{equation}
Since $f_3'(0)=0$, it is clear that one can find a formal power series $F$ for which $f_1^*=0$. With $F$ chosen in this way, one can analogously determine a formal power series $G$ that insures $f_2^*=1$. Thus, setting $f:=f_3^*$, we write $\vf(Y_0)$ as
\begin{equation}
\vf(Y_0)=\left(-z-z^2\right)\frac{\partial}{\partial z}+\left(-1-2z+e^w\right)\frac{\partial}{\partial w}+f(t)e^w\frac{\partial}{\partial t},\label{rep01}
\end{equation}
where $f'(0)=0$ (cf.~(\ref{rep0})).

As $[R,Y_1]=0$, $[R,Y_2]=0$, it follows from (\ref{X1X2}) that
\begin{equation}
\vf(R)=a_1(t)e^w\frac{\partial}{\partial z}+b_1(t)\frac{\partial}{\partial w}+c_1(t)\frac{\partial}{\partial t},\label{formRR}
\end{equation}
and the identity $[R,Y_0]=0$ together with formula (\ref{rep01}) yields
\begin{equation}
a_1+fa_1'=0,\quad b_1-fb_1'-2a_1=0,\quad f'c_1-fc_1'+fb_1=0.\label{rcommx0}
\end{equation}
Since $f'(0)=0$, equations (\ref{rcommx0}) together with power series decomposition imply
\begin{equation}
a_1=0,\quad b_1=0,\quad c_1=kf, \label{a1b1c1}
\end{equation}
where $k\in\C\setminus\{0\}$. 

Next, as $[Y_1,V_1]=0$ and $[Y_2,V_1]=-V_1$, we see
\begin{equation}
\vf(V_1)=a_2(t)\frac{\partial}{\partial z}+b_2(t)e^{-w}\frac{\partial}{\partial w}+c_2(t)e^{-w}\frac{\partial}{\partial t}.\label{vfV}
\end{equation}
The identity $[R,V_1]=V_1$ together with (\ref{formRR}), (\ref{a1b1c1}) now leads to
$$
a_2-kfa_2'=0,\quad b_2-kfb_2'=0,\quad c_2-kfc_2'+kf'c_2=0.
$$
Since $f'(0)=0$, analyzing the above equations analogously to (\ref{rcommx0}), we obtain $a_2=0$, $b_2=0$, $c_2=0$. Hence $\vf(V_1)=0$, which is impossible since the $\mathfrak{p}_2$-action is effective.
\vspace{0.1cm}\\

{\bf Case 2.} Suppose next that in (\ref{rep0}) we have $f_3=0$ and the almost complex structure induced on $\Sigma$ by $M$ is $J_1^{+}$ (see (\ref{complstructures})). By solving the system
$$
\begin{array}{l}
f_1+Ff_2+F^2=0,\\
\vspace{-0.3cm}\\
f_2+2F=e^G
\end{array}
$$ 
for $F$ and $G$ (see (\ref{change88}), (\ref{change888})), we can assume that in (\ref{rep0}) one also has $f_1=0$, $f_2=1$, i.e., 
\begin{equation}
\vf(Y_0)=\left(-z-z^2\right)\frac{\partial}{\partial z}+\left(-1-2z+e^w\right)\frac{\partial}{\partial w}.\label{rep02}
\end{equation}

Next, as in Case 1, we see that $\vf(R)$, $\vf(V_1)$ have the forms (\ref{formRR}), (\ref{vfV}), respectively. Furthermore, the identity $[R,Y_0]=0$ together with formula (\ref{rep02}) yields $a_1=0$, $b_1=0$ (cf.~(\ref{a1b1c1})). Then the identity $[R,V_1]=V_1$ leads to
\begin{equation}
a_2-c_1a_2'=0,\quad b_2-c_1b_2'=0,\quad c_2-c_1c_2'+c_1'c_2=0.\label{rel333}
\end{equation}  

Let $\beta:=c_1'(0)$. As $c_2(0)=0$, relations (\ref{rel333}) together with power series decomposition immediately imply that if $\beta\ne 1/n$ for any $n\in\N$, then $\vf(V_1)=0$, which contradicts the effectiveness of the $\mathfrak{p}_2$-action. Therefore, $\beta=1/n$ for some $n\in\N$, and by changing the variable $t$ one can assume that
\begin{equation}
R=\frac{t}{n}\frac{\partial}{\partial t}.\label{formRRR}
\end{equation}
For $\beta=1/n$ from (\ref{rel333}) we deduce
\begin{equation}
a_2=qt^n+o(t^n), \quad b_2=rt^n+o(t^n),\quad c_2=st^{n+1}+o(t^{n+1})\label{rel41}
\end{equation}
where $q,r,s\in\C$. Notice that the condition $q=r=s=0$ implies $\vf(V_1)=0$, thus at least one of $q,r,s$ is nonzero.

We will now utilize a number of commutation relations among vector fields arising from the action of $\mathfrak{sl}(2,\C)_{\R}$. In all such relations we will only focus on the coefficients at $t\partial/\partial t$, which will yield constraints on some of the functional parameters involved. Write $\vf(Y_2^i)$ in general form as
\begin{equation}
\vf(Y_2^i)=(iz+O(t))\frac{\partial}{\partial z}+(i+O(t))\frac{\partial}{\partial w}+(tc_1(z,w)+o(t))\frac{\partial}{\partial t}\footnote{Here and below $O(t^k)$ (resp.~$o(t^k)$) denotes any function of the form $\sum_{j=k}^{\infty}a_j(z,w)t^j$ (resp.~$\sum_{j=k+1}^{\infty}a_j(z,w)t^j$), where $a_j(z,w)$ is holomorphic.}\label{rep22}
\end{equation}
(cf.~(\ref{X1X2})). Then, since $Y_1^i=[Y_1,Y_2^i]$, we obtain
\begin{equation}
\vf(Y_1^i)=(i+O(t))\frac{\partial}{\partial z}+O(t)\frac{\partial}{\partial w}+\left(t\frac{\partial c_1}{\partial z}+o(t)\right)\frac{\partial}{\partial t}\label{rep11}
\end{equation}   
(cf.~(\ref{X1X2})). Next, from the identities $[Y_1,Y_1^i]=0$ and $[Y_1^i,Y_2]=Y_1^i$ we see
\begin{equation}
c_1=\tilde c_1 ze^{-w}+\check c_1(w),\label{formc1}
\end{equation}                   
with $\tilde c_1\in\C$. The relation $[Y_1^i,Y_2^i]=-Y_1$ together with (\ref{rep22}), (\ref{rep11}) now yields $\tilde c_1=0$. Hence, we have
\begin{equation}
\vf(Y_1^i)=(i+O(t))\frac{\partial}{\partial z}+O(t)\frac{\partial}{\partial w}+o(t)\frac{\partial}{\partial t}.\label{rep111}
\end{equation}   

Let $r(x,y,u,v,\tau,\sigma)=0$ be the equation of $M$ near the origin, where $z=x+iy$, $w=u+iv$, $t=\tau+i\sigma$ and $\hbox{grad}\, r(0)\ne 0$. Since  $\Sigma=\{t=0\}$ lies in $M$, the linear part of $r$ at the origin can only contain $\tau$ and $\sigma$, and without loss of generality we assume that $M$ is given by an equation of the form $\sigma=\varphi(x,y,u,v,\tau)$, with $\varphi(x,y,u,v,0)=0$. 

Consider the condition that the vector field $2\hbox{Re}(R)$ is tangent to $M$:
$$
2\hbox{Re}(R)(\varphi-\sigma)|_{\sigma=\varphi}=0.
$$
From this condition and formula (\ref{formRRR}) we immediately obtain
\begin{equation}
\varphi(x,y,u,v,\tau)=\tau\psi(x,y,u,v),\label{formphi}
\end{equation}
where by scaling the variable $t$ by a complex number we may suppose that $\psi(0)=0$. Next, consider the conditions that the vector fields $2\hbox{Re}(\vf(Y_j))$ for $j=0,1,2$ and $2\hbox{Re}(\vf(Y_1^i))$ are tangent to $M$:
\begin{equation}
\begin{array}{l}
2\hbox{Re}(\vf(Y_j))(\varphi-\sigma)|_{\sigma=\varphi}=0,\quad  j=0,1,2,\\
\vspace{-0.3cm}\\
2\hbox{Re}(\vf(Y_1^i))(\varphi-\sigma)|_{\sigma=\varphi}=0.
\end{array}\label{tandencyr}
\end{equation}
From the first equation with $j=1$ we see that the function $\psi$ in (\ref{formphi}) is independent of $x$, so it suffices to study identities (\ref{tandencyr}) only for $x=0$. Using formulas (\ref{X1X2}), (\ref{rep0}), (\ref{rep111}) we then deduce 
\begin{equation}
\begin{array}{l}
y\psi_y+\psi_u=0,\\
\vspace{-0.3cm}\\
\Re(e^w)\psi_u+(-2y+\Im(e^w))\psi_v=0,\\
\vspace{-0.3cm}\\ 
\psi_y=0,
\end{array}\label{syspsir}
\end{equation}
where the last equation follows by isolating the term linear in $\tau$ in the last identity in (\ref{tandencyr}). System (\ref{syspsir}) clearly yields $\psi=0$, which is impossible since in this case $M=\{\sigma=0\}$ is Levi-flat.
\vspace{0.1cm}\\  

{\bf Case 3.} Suppose now that in (\ref{rep0}) we have $f_3=0$ and the almost complex structure induced on $\Sigma$ by $M$ is $J_2^{+}$ (see (\ref{complstructures})). As in Case 2, we can assume that in (\ref{rep0}) one also has $f_1=0$, $f_2=1$.

We argue as in Case 2, but in this situation the vector field $\vf(Y_1^i)$, hence the last equation in system (\ref{syspsir}), changes. To determine the form of $\vf(Y_1^i)$, we will utilize a number of commutation relations among vector fields arising from the action of $\mathfrak{sl}(2,\C)_{\R}$ and in all such relations only focus on the coefficients at $t\partial/\partial t$. First, write $\vf(Y_2^i)$ in general form as
\begin{equation}
\vf(Y_2^i)=(iz+O(t))\frac{\partial}{\partial z}-(i-2ize^{-w}+O(t))\frac{\partial}{\partial w}+(tc(z,w)+o(t))\frac{\partial}{\partial t}\label{rep088d1}
\end{equation}
(cf.~(\ref{vfstand1})). The identity $Y_1^i=[Y_1,Y_2^i]$ then implies
\begin{equation} 
\vf(Y_1^i)=(i+O(t))\frac{\partial}{\partial z}+(2ie^{-w}+O(t))\frac{\partial}{\partial w}+\left(t\frac{\partial c}{\partial z}+o(t)\right)\frac{\partial}{\partial t}\label{rep0888d1}
\end{equation} 
(cf.~(\ref{vfstand1})). Now, from formulas (\ref{rep088d1}), (\ref{rep0888d1}) and the identities $[Y_1,Y_1^i]=0$, $[Y_1^i,Y_2]=Y_1^i$, $[Y_2,Y_2^i]=0$ we see
\begin{equation}
c=\tilde c ze^{-w}+\check c,\label{formcc}
\end{equation}                  
with $\tilde c,\check c\in\C$. Hence, we have
\begin{equation} 
\vf(Y_1^i)=(i+O(t))\frac{\partial}{\partial z}+(2ie^{-w}+O(t))\frac{\partial}{\partial w}+\left(\tilde c t e^{-w}+o(t)\right)\frac{\partial}{\partial t}.\label{rep0888d11}
\end{equation}

Expression (\ref{rep0888d11}) leads to the following analogue of system (\ref{syspsir}):
\begin{equation}
\begin{array}{l}
y\psi_y+\psi_u=0,\\
\vspace{-0.3cm}\\
\Re(e^w)\psi_u+(-2y+\Im(e^w))\psi_v=0,\\
\vspace{-0.3cm}\\ 
\psi_y-2\Im(e^{-w})\psi_u+2\Re(e^{-w})\psi_v-\Im(\tilde c \, e^{-w})(1+\psi^2)=0.
\end{array}\label{syspsirs}
\end{equation}
Resolving (\ref{syspsirs}) with respect to $\psi_y$ we get
\begin{equation}
\psi_y=\frac{(-2y+e^u\sin v)(\Im(\tilde c)\cos v-\Re(\tilde c)\sin v)(1+\psi^2)}{\sin v(4y^2-4ye^u\sin v+e^{2u})}.\label{exprpsiy}
\end{equation}
It now follows that $\Im(\tilde c)=0$ since otherwise the expression in the right-hand side of (\ref{exprpsiy}) has no limit at the origin. For $\Im(\tilde c)=0$, from (\ref{syspsirs}) we see
$$
\begin{array}{l}
\displaystyle\psi_y=-\frac{\tilde c(-2y+e^u\sin v)(1+\psi^2)}{4y^2-4ye^u\sin v+e^{2u}},\\
\vspace{-0.1cm}\\
\displaystyle\psi_u=\frac{\tilde cy(-2y+e^u \sin v)(1+\psi^2)}{4y^2-4ye^u\sin v+e^{2u}},\\
\vspace{-0.1cm}\\
\displaystyle\psi_v=-\frac{\tilde cye^u\cos v(1+\psi^2)}{4y^2-4ye^u\sin v+e^{2u}},\\
\end{array}
$$
which yields
\begin{equation}
\psi=\tan\left(\frac{\tilde c}{4}\log\left(\cos^2v+(2e^{-u}y-\sin v)^2\right)\right).\label{rel771}
\end{equation}
Clearly, we have $\tilde c\ne 0$ since otherwise $M=\{\sigma=0\}$ is Levi-flat.

We will now utilize the condition that $2\Re(\vf(V_1))$ is tangent to $M$: 
\begin{equation}
2\hbox{Re}(\vf(V_1))(\varphi-\sigma)|_{\sigma=\varphi}=0.\label{tangv}
\end{equation}
By isolating in (\ref{tangv}) the terms of order $n+1$ in $\tau$ we get
\begin{equation}
\begin{array}{l}
\displaystyle\Im\left(q(1+i\psi)^n\right)\psi_y+\Re\left(r(1+i\psi)^ne^{-w}\right)\psi_u\\
\vspace{-0.3cm}\\
\hspace{1.5cm}\displaystyle+\Im\left(r(1+i\psi)^ne^{-w}\right)\psi_v-\Im\left(s(1+i\psi)^ne^{-w}\right)(1+\psi^2)=0
\end{array}\label{rel8871}
\end{equation}
(see (\ref{rel41})). Writing $(1+i\psi)=\sqrt{1+\psi^2}e^{i\theta}$ with $\theta:=\tan^{-1}\psi$ and setting $v=0$, $\zeta:=2e^{-u}y$, from (\ref{rel771}), (\ref{rel8871}) we obtain
$$
\Im\left[\left(\left(\frac{i \tilde c r}{2}+s\right)\zeta^2+\left(\frac{\tilde c r}{2}-\tilde c q\right)\zeta+s\right)e^{in\theta}\right]=0.
$$
Hence
\begin{equation}
\tan n\theta=-\frac{\Im(\mathcal{A}) \zeta^2+\Im(\mathcal{B}) \zeta + \Im(\mathcal{C})}{\Re(\mathcal{A}) \zeta^2+\Re(\mathcal{B}) \zeta + \Re(\mathcal{C})},\label{formtan1}
\end{equation}
where
$$
\mathcal{A}:=\frac{i\tilde cr}{2}+s,\quad \mathcal{B}:=\frac{\tilde c r}{2}-\tilde c q,\quad \mathcal{C}:=s.
$$
Notice that $\mathcal{A}$, $\mathcal{B}$, $\mathcal{C}$ cannot all be equal to zero since otherwise $q=r=s=0$, which contradicts our observation (made in Case 2) that at least one of $q,r,s$ is nonzero. 

On the other hand, from (\ref{rel771}) for $v=0$ we derive
\begin{equation}
\theta=\frac{\tilde c}{4}\log(\zeta^2+1).\label{formthetasss}
\end{equation}
Hence, the range of $\theta(\zeta)$ is a half-line, and therefore the function $\tan (n\theta(\zeta))$ has infinitely many zeroes, which contradicts (\ref{formtan1}).
\vspace{0.1cm}\\

{\bf Case 4.} Suppose next that $\alpha:=f_3'(0)$ is nonzero and the almost complex structure induced on $\Sigma$ by $M$ is $J_1^{+}$ (see (\ref{complstructures}), (\ref{rep0})).

As before, we will utilize a number of commutation relations among vector fields arising from the action of $\mathfrak{sl}(2,\C)_{\R}$ and in all such relations focus on the coefficients at $t\partial/\partial t$. First of all, arguing as in Case 2, we see that (\ref{rep22}), (\ref{rep11}), (\ref{formc1}) hold and that $\tilde c_1=0$. Then, utilizing the relation $[Y_0,Y_1^i]=Y_1^i+2Y_2^i$ together with (\ref{rep0}) we obtain $\check c_1(w)=0$. Therefore, the coefficient at $\partial/\partial t$ in $\vf(Y_j^i)$ is  of the order $o(t)$ for $j=1,2$.

Next, write $\vf(Y_0^i)$ in general form
\begin{equation}
\begin{array}{l}
\displaystyle\vf(Y_0^i)=\left(-iz-iz^2+O(t)\right)\frac{\partial}{\partial z}\\
\vspace{-0.3cm}\\
\hspace{2.5cm}\displaystyle+\left(-i-2iz+ie^w+O(t)\right)\frac{\partial}{\partial w}+(t c_2(z,w)+o(t))\frac{\partial}{\partial t}
\end{array}\label{rep4455}
\end{equation}
(cf.~(\ref{rep0})). From the identities $[Y_0^i,Y_1]=Y_1^i+2Y_2^i$ and $[Y_0^i,Y_2]=-Y_0^i-Y_2^i$ together with (\ref{X1X2}), (\ref{rep22}), (\ref{rep11}), (\ref{rep4455}) we then see that $c_2=\tilde c_2 e^w$, with $\tilde c_2\in\C$, and the relation $[Y_0,Y_2^i]=-Y_0^i-Y_2^i$ yields $\tilde c_2=i\alpha$. Thus, by (\ref{rep4455}), the coefficient at $\partial/\partial t$ in $\vf(Y_0^i)$ is $i\alpha te^w+o(t)$.

Using formulas (\ref{X1X2}), (\ref{rep0}), (\ref{rep22}), (\ref{rep11}), (\ref{rep4455}), we now evaluate the vector fields $2\hbox{Re}\,(\vf(Y_j))$, $2\hbox{Re}\,(\vf(Y_j^i))$, $j=0,1,2$, at a point $p=(z,w,t)$ with $t\ne 0$ and expand the resulting vector with respect to the basis $2\hbox{Re}\,\partial/\partial z|_p$, $-2\hbox{Im}\,\partial/\partial z|_p$, $2\hbox{Re}\,\partial/\partial w|_p$, $-2\hbox{Im}\,\partial/\partial w|_p$, $2\hbox{Re}\,\partial/\partial t|_p$, $-2\hbox{Im}\,\partial/\partial t|_p$. This leads to a real $6\times 6$-matrix, say $A(p)$, and it is easy to see that $\det A(p)=|\alpha|^2\left|e^w\right|^2|t|^2+o(|t|^2)$, where $o(|t|^2)$ denotes a function of $(z,w,t)$ with the property that, being divided by $|t|^2$, for any fixed $z,w$ it tends to 0 as $t\to 0$. Hence, $A(p)$ is nondegenerate for all points $p$ sufficiently close to 0 and not lying in $\Sigma$. Thus, the local $\SL(2,\C)_{\R}$-orbit of any such $p$ is 6-dimensional, which is impossible since the local $\SL(2,\C)_{\R}$-action preserves the 5-dimensional manifold~$M$. 
\vspace{0.1cm}\\

{\bf Case 5.} Suppose finally that $\alpha:=f_3'(0)$ is nonzero and the almost complex structure induced on $\Sigma$ by $M$ is $J_2^{+}$ (see (\ref{complstructures}), (\ref{rep0})). By making the change of coordinates
$$
z\mapsto z+\frac{1-f_2(t)}{2}e^w,\quad w\mapsto w,\quad t\mapsto t,
$$
we can suppose that in (\ref{rep0}) one has $f_2=1$ (see (\ref{change88}), (\ref{change888})).

As earlier, we will utilize certain commutation relations among vector fields arising from the action of $\mathfrak{sl}(2,\C)_{\R}$ and in all of them focus on the coefficients at $t\partial/\partial t$. First of all, arguing as in Case 3, we see that (\ref{rep088d1}), (\ref{rep0888d1}), (\ref{formcc}) hold. From (\ref{rep0}) and the identity $[Y_0,Y_1^i]=Y_1^i+2Y_2^i$ we then deduce
$\tilde c=-2(\check c+i\alpha)$, which yields the formulas
\begin{equation}
\hspace{0.4cm}\makebox[250pt]{$\begin{array}{l}
\displaystyle\vf(Y_1^i)=(i+O(t))\frac{\partial}{\partial z}+(2ie^{-w}+O(t))\frac{\partial}{\partial w}+(-2(\check c+i\alpha) e^{-w}t+o(t))\frac{\partial}{\partial t},\\
\vspace{-0.3cm}\\
\displaystyle\vf(Y_2^i)=(iz+O(t))\frac{\partial}{\partial z}-(i-2ize^{-w}+O(t))\frac{\partial}{\partial w}\\
\vspace{-0.3cm}\\
\hspace{6cm}\displaystyle+\left(-2(\check c+i\alpha)ze^{-w}t+\check ct+o(t)\right)\frac{\partial}{\partial t}.
\end{array}$}\label{finalformx1ix2id}
\end{equation}

Next, from the relation $[Y_0,Y_2^i]=-Y_0^i-Y_2^i$ as well as (\ref{rep0}), (\ref{finalformx1ix2id}), it follows that
\begin{equation}
\makebox[250pt]{$\begin{array}{l}
\displaystyle\vf(Y_0^i)=-i\left(z+z^2+O(t)\right)\frac{\partial}{\partial z}\\
\vspace{-0.3cm}\\
\displaystyle\hspace{1cm} -i\left(-1-2z+e^{w}+2ze^{-w}+2z^{2}e^{-w}+O(t)\right)\frac{\partial}{\partial w}\\
\vspace{-0.3cm}\\
\displaystyle\hspace{2cm}+\left(2(\check c+i\alpha)(z+z^2)e^{-w}t+\check c(-1-2z)t-i\alpha e^wt+o(t)\right)\frac{\partial}{\partial t}
\end{array}$}\label{finalformx0id}
\end{equation}
(cf.~(\ref{vfstand1})).

We will now consider the matrix $A(z,w,t)$, as introduced in Case 4 above. From (\ref{X1X2}), (\ref{rep0}), (\ref{finalformx1ix2id}), (\ref{finalformx0id}) it is not hard to observe
$$
\det A(0,0,t)=k\,\left(\hbox{Im}\,\left(\frac{\check c}{\alpha}\right)+1\right)|t|^2+o(|t|^2),
$$ 
with $k\in\R\setminus\{0\}$. Hence, if $\hbox{Im}\,(\check c/\alpha)\ne -1$, the local $\SL(2,\C)_{\R}$-orbit of $(0,0,t)$ is 6-dimensional for all sufficiently small nonzero $|t|$, which is impossible since the local $\SL(2,\C)_{\R}$-action preserves the 5-dimensional manifold $M$. Therefore, we have
\begin{equation}
\hbox{Im}\,(\check c/\alpha)=-1.\label{relpar1}
\end{equation}

Next, as before, we see that $\vf(R)$, $\vf(V_1)$ have the forms (\ref{formRR}), (\ref{vfV}), respectively. Furthermore, the identity $[R,Y_0]=0$ together with (\ref{rep0}) yields
$$
a_1+f_3a_1'-2f_1b_1-f_1'c_1=0,\quad b_1-f_3b_1'-2a_1=0. 
$$
Since $c_1(0)=0$, it then follows that
\begin{equation}
a_1(0)=0,\quad b_1(0)=0.\label{rel1}
\end{equation}

Let $\beta:=c_1'(0)$ and consider the matrix $A_R(z,w,t)$ obtained from $A(z,w,t)$ by replacing the row corresponding to $\vf(Y_0^i)$ with that arising from $\vf(R)$. From (\ref{X1X2}), (\ref{rep0}), (\ref{formRR}), (\ref{finalformx1ix2id}), (\ref{rel1}) it is not hard to see
$$
\det A_R(0,0,t)=k\,(\hbox{Re}\,\alpha\,\hbox{Im}\,\beta-\hbox{Im}\,\alpha\,\hbox{Re}\,\beta)|t|^2+o(|t|^2),
$$
with $k\in\R\setminus\{0\}$. Hence, if $\alpha$ and $\beta$ are linearly independent over $\R$, the local $P$-orbit of $(0,0,t)$ is 6-dimensional for all sufficiently small nonzero $|t|$, which is impossible since the $P$-action preserves the 5-dimensional manifold $M$. Therefore, we have
\begin{equation}
\beta=\mu\alpha,\quad \mu\in\R. \label{relpar2}
\end{equation}

Next, the identity $[R,V_1]=V_1$ along with (\ref{formRR}), (\ref{vfV}) yields
\begin{equation}
\begin{array}{l}
\displaystyle a_2-c_1a_2'+a_1b_2+a_1'c_2=0,\\
\vspace{-0.3cm}\\
\displaystyle b_2-c_1b_2'+b_1b_2+b_1'c_2=0,\\
\vspace{-0.3cm}\\
\displaystyle c_2-c_1c_2'+b_1c_2+c_1'c_2=0
\end{array}\label{rel3}
\end{equation}
(cf.~(\ref{rel333})). As $c_2(0)=0$, relations (\ref{rel1}), (\ref{rel3}) immediately imply that if $\beta\ne 1/n$ for any $n\in\N$, then $\vf(V_1)=0$, which contradicts the effectiveness of the $\mathfrak{p}_2$-action. Therefore, $\beta=1/n$ for some $n\in\N$, in which case from (\ref{rel1}), (\ref{rel3}) we deduce that $a_2, b_2, c_2$ have the form (\ref{rel41}), where $q,r,s\in\C$. Notice that the condition $q=r=s=0$ implies $\vf(V_1)=0$, thus at least one of $q,r,s$ is nonzero (cf.~Case 2).  Also, since $\beta=1/n$, by (\ref{relpar2}) we have $\alpha\in\R$, and by (\ref{relpar1}) we write $\check c=\rho-i\alpha$ for some $\rho\in\R$.

As in Cases 2 and 3, we now express the equation of $M$ near the origin in the form $\sigma=\varphi(x,y,u,v,\tau)$, with $\varphi(x,y,u,v,0)=0$, where $z=x+iy$, $w=u+iv$, $t=\tau+i\sigma$. We then write $\varphi(x,y,u,v,\tau)=\tau\psi(x,y,u,v)+o(\tau)$\footnote{Here $o(\tau)$ denotes any function of the form $\sum_{j=2}^{\infty}a_j(x,y,u,v)\tau^j$, where $a_j(x,y,u,v)$ is real-analytic.} and focus on the function $\psi$, for which we may assume $\psi(0)=0$. Next, as in (\ref{tandencyr}), consider the conditions that the vector fields $2\hbox{Re}(\vf(Y_j))$ for $j=0,1,2$ and $2\hbox{Re}(\vf(Y_1^i))$ are tangent to $M$ and single out the terms linear in $\tau$. From the first equation in (\ref{tandencyr}) with $j=1$ we immediately see that $\psi$ is independent of $x$, so it suffices to consider identities (\ref{tandencyr}) only for $x=0$. Using formulas (\ref{X1X2}), (\ref{rep0}), (\ref{finalformx1ix2id}) we then obtain 
\begin{equation}
\begin{array}{l}
y\psi_y+\psi_u=0,\\
\vspace{-0.3cm}\\
\Re(e^w)\psi_u+(-2y+\Im(e^w))\psi_v-\alpha\Im(e^w)(1+\psi^2)=0,\\
\vspace{-0.3cm}\\ 
\psi_y-2\Im(e^{-w})\psi_u+2\Re(e^{-w})\psi_v+2\rho\Im(e^{-w})(1+\psi^2)=0
\end{array}\label{syspsi}
\end{equation}
(cf.~(\ref{syspsir}), (\ref{syspsirs})). System of equations (\ref{syspsi}) can be solved explicitly, and we get
\begin{equation}
\psi=\tan\left(-\alpha\tan^{-1}\frac{2e^{-u}y-\sin v}{\cos v}-\frac{\rho}{2}\log\left(\cos^2v+(2e^{-u}y-\sin v)^2\right)\right)\label{rel77}
\end{equation}
(cf.~(\ref{rel771})).

We will now utilize condition (\ref{tangv}) of the tangency of $2\Re(\vf(V_1))$ to $M$. As in Case 3, this leads to (\ref{rel8871}), hence, for for $\theta:=\tan^{-1}\psi$, $v=0$, $\zeta:=2e^{-u}y$, to 
$$
\Im\left[\Bigl((i\rho r-s)\zeta^2+(i\alpha r+\rho r-2\rho q)\zeta+(\alpha r-2\alpha q-s)\Bigr)e^{in\theta}\right]=0\label{formmmm8888}
$$
and to (\ref{formtan1}), where
\begin{equation}
\mathcal{A}:=i\rho r-s,\quad \mathcal{B}:=i\alpha r+\rho r-2\rho q,\quad \mathcal{C}:=\alpha r-2\alpha q-s.\label{coeffmathcal1}
\end{equation}
Notice that $\mathcal{A}$, $\mathcal{B}$, $\mathcal{C}$ cannot all be equal to zero since otherwise $q=r=s=0$, which contradicts our observation that at least one of $q,r,s$ is nonzero. 

On the other hand, from (\ref{rel77}) for $v=0$ we obtain
\begin{equation}
\theta=-\alpha\tan^{-1}\zeta-\frac{\rho}{2}\log(\zeta^2+1)\label{exprtheta}
\end{equation}
(cf.~(\ref{formthetasss})). Hence if $\rho\ne 0$, the range of $\theta(\zeta)$ is a half-line, and therefore the function $\tan (n\theta(\zeta))$ has infinitely many zeroes, which contradicts (\ref{formtan1}). Thus, it follows that $\rho=0$. Furthermore, by an elementary analysis of the graph of the function $\tan(n\theta(\zeta))$ one notes that the number of its zeroes (resp.~poles) is odd (resp.~even) and using (\ref{formtan1}), (\ref{exprtheta}) deduces that either $\alpha=\pm 1/n$ or $\alpha=\pm 2/n$. We will now consider each of these four possibilities separately. Before proceeding, we record the formulas for the first-order partial derivatives of $\psi$:
$$
\begin{array}{l}
\displaystyle\psi_y=-\frac{2\alpha e^u\cos v(1+\psi^2)}{4y^2-4ye^u\sin v+e^{2u}},\\
\vspace{-0.1cm}\\
\displaystyle\psi_u=\frac{2\alpha y e^u\cos v(1+\psi^2)}{4y^2-4ye^u\sin v+e^{2u}},\\
\vspace{-0.1cm}\\
\displaystyle\psi_v=-\frac{\alpha(2ye^u\sin v-e^{2u})(1+\psi^2)}{4y^2-4ye^u\sin v+e^{2u}},
\end{array}
$$
which follow directly from (\ref{syspsi}). 

{\bf Case 5.1.} Suppose first that $\alpha=1/n$. In this case formulas (\ref{formtan1}), (\ref{coeffmathcal1}) yield $q=0$, $s=i\Im(r)/n$, where in (\ref{formtan1}) we took into account that for $v=0$ one has $\tan (n\theta(\zeta))=-\zeta$. Further, (\ref{rel8871}) simplifies to
\begin{equation}
(2e^{-u}y-\sin v)(P\sin v+Q\cos v)-(P\cos v-Q\sin v)\cos v=0,\label{ids777}
\end{equation}
where
$$
\begin{array}{l}
\displaystyle P:=2\left(2\Im(r)e^{-u}y^2-\Re(r)y\cos v-\Im(r)y\sin v\right),\\
\vspace{-0.1cm}\\
\displaystyle Q:=2\Im(r)y\cos v+\Re(r)(2y\sin v-e^u)
\end{array}
$$
and where we utilized
$$
\tan n\theta=-\frac{2e^{-u}y-\sin v}{\cos v}.
$$
It is immediate from (\ref{ids777}) that $\Im(r)=0$. Hence we have $s=0$, and the last equation in (\ref{rel3}) together with (\ref{rel41}) implies $c_2=0$. It then follows from (\ref{vfV}), (\ref{rel41}) that
\begin{equation}
\vf(V_1)=o(t^n)\frac{\partial}{\partial z}+(r t^n+o(t^n))e^{-w}\frac{\partial}{\partial w},\label{vf11}
\end{equation}
where $r\in\R$.

Next, by (\ref{relations8888}) we have $\vf(V_2)=[\vf(Y_0)+\vf(Y_2),\vf(V_1)]$, which with the help of (\ref{X1X2}), (\ref{rep0}), (\ref{vf11}) yields
\begin{equation}
\vf(V_2)=o(t^n)\frac{\partial}{\partial z}+(rt^n(2ze^{-w}-1)+o(t^n))\frac{\partial}{\partial w}-\left(r\frac{t^{n+1}}{n}+o(t^{n+1})\right)\frac{\partial}{\partial t}.\label{vf21}
\end{equation}
Further, by (\ref{relations8888}) we have $\vf(V_3)=-[\vf(Y_2^i),\vf(V_2)]$, which together with (\ref{finalformx1ix2id}), (\ref{vf21}) implies
$$
\vf(V_3)=o(t^n)\frac{\partial}{\partial z}-(irt^n+o(t^n))\frac{\partial}{\partial w}-\left(ir\frac{t^{n+1}}{n}+o(t^{n+1})\right)\frac{\partial}{\partial t}.
$$
Finally, by (\ref{relations8888}) we have $\vf(V_4)=[\vf(Y_0)+\vf(Y_2),\vf(V_2)]/2$, and using (\ref{X1X2}), (\ref{rep0}), (\ref{vf21}) obtain
$$
\vf(V_4)=o(t^n)\frac{\partial}{\partial z}+(rzt^n(ze^{-w}-1)+o(t^n))\frac{\partial}{\partial w}-\left(rz\frac{t^{n+1}}{n}+o(t^{n+1})\right)\frac{\partial}{\partial t}.
$$

Observe now that the examples from Theorem \ref{main2} are obtained from the situation of Case 5.1 by taking into account only the linear term with respect to $\tau$ in the expression for the defining function $\varphi$ of $M$ and only the terms of the lowest possible orders with respect to $t$ in the expressions for the vector fields computed here, as well as by applying the following automorphism of $\mathfrak{p}_2$:
$$
\begin{array}{llll}
X_1\mapsto -X_3, & X_2\mapsto -X_2, & X_3\mapsto -X_1, &\\
\vspace{-0.1cm}\\
X_1^i\mapsto -X_3^i, & X_2^i\mapsto -X_2^i,& X_3^i\mapsto -X_1^i, & R\mapsto R,\\
\vspace{-0.1cm}\\
V_1\mapsto V_4, & V_2\mapsto V_2, & V_3\mapsto -V_3, & V_4\mapsto V_1.
\end{array}
$$
\vspace{-0.1cm}\\

{\bf Case 5.2.} Suppose next that $\alpha=-1/n$. In this case formulas (\ref{formtan1}), (\ref{coeffmathcal1}) yield $q=r$, $s=i\Im(r)/n$, where in (\ref{formtan1}) we took into account that for $v=0$ one has $\tan (n\theta(\zeta))=\zeta$. Further, (\ref{rel8871}) simplifies to
\begin{equation}
\begin{array}{l}
(2e^{-u}y-\sin v)(2\Re(r)e^u\cos v+P\sin v+Q\cos v))+\\
\vspace{-0.3cm}\\
\hspace{2.5cm}(2\Im(r)e^u\cos v+P\cos v-Q\sin v)\cos v=0,
\end{array}\label{ids7771}
\end{equation}
where
$$
\begin{array}{l}
P:=-2(2\Im(r)e^{-u}y^2-3\Im(r)y\sin v+\Re(r)y\cos v+\Im(r)e^{u}),\\
\vspace{-0.3cm}\\
Q:=2\Im(r)y\cos v+\Re(r)(2y\sin v-e^u)
\end{array}
$$
and where we utilized
$$
\tan n\theta=\frac{2e^{-u}y-\sin v}{\cos v}.
$$
It is immediate from (\ref{ids7771}) that $\Im(r)=0$. Hence we have $s=0$, and the last equation in (\ref{rel3}) together with (\ref{rel41}) again implies $c_2=0$. It then follows from (\ref{vfV}), (\ref{rel41}) that
$$
\vf(V_1)=(rt^n+o(t^n))\frac{\partial}{\partial z}+(rt^n+o(t^n))e^{-w}\frac{\partial}{\partial w},
$$
where $r\in\R$. Further, arguing as in Case 5.1 we obtain expressions for $\vf(V_2)$, $\vf(V_3)$, $\vf(V_4)$:
$$
\begin{array}{l}
\displaystyle \vf(V_2)=(rt^n(2z-e^w)+o(t^n))\frac{\partial}{\partial z}+(rt^n(2ze^{-w}-1)+o(t^n))\frac{\partial}{\partial w}\\
\vspace{-0.3cm}\\
\hspace{8.2cm}\displaystyle+\left(r\frac{t^{n+1}}{n}+o(t^{n+1})\right)\frac{\partial}{\partial t},\\
\vspace{-0.1cm}\\
\displaystyle \vf(V_3)=-(irt^ne^w+o(t^n))\frac{\partial}{\partial z}-(irt^n+o(t^n))\frac{\partial}{\partial w}-\left(ir\frac{t^{n+1}}{n}+o(t^{n+1})\right)\frac{\partial}{\partial t},\\
\vspace{-0.1cm}\\
\displaystyle \vf(V_4)=(rt^nz(z-e^w)+o(t^n))\frac{\partial}{\partial z}+(rt^nz(ze^{-w}-1)+o(t^n))\frac{\partial}{\partial w}\\
\vspace{-0.3cm}\\
\hspace{7cm}\displaystyle+\left(r(z-e^w)\frac{t^{n+1}}{n}+o(t^{n+1})\right)\frac{\partial}{\partial t}.
\end{array}
$$

As in Case 5.1, we can now consider the linear term with respect to $\tau$ in the expression for the defining function $\varphi$ of $M$ as well as the terms of the lowest possible orders with respect to $t$ in the expressions for the corresponding vector fields, and attempt to construct further examples in the spirit of Theorem \ref{main2}. In this case the manifold, say $M_n'\subset\C^3$ is defined for $-\pi/2<v<\pi/2$ by the equation
\begin{equation}
\sigma=\tau\tan\left(\frac{1}{n}\tan^{-1}\frac{2y-e^u\sin v}{e^u\cos v}\right).\label{eqexamp1}
\end{equation}
It is then clear from (\ref{eqexamp}) and (\ref{eqexamp1}) that $M_n$ and $M_n'$ for every $n$ are CR-equivalent by means of the mapping
$$
z\mapsto -z+e^w-1,\quad w \mapsto w,\quad t \mapsto t,
$$
which is biholomorphic on $\{-\pi/2<v<\pi/2\}$ and preserves the origin. Thus, passing to the lowest-order terms in Case 5.2 leads to the examples given earlier.
\vspace{-0.1cm}\\

{\bf Case 5.3.} Suppose next that $\alpha=2/n$. In this case formulas (\ref{formtan1}), (\ref{coeffmathcal1}) yield $q=0$, $r\in\R$, $s=r/n$, where in (\ref{formtan1}) we took into account that for $v=0$ one has
$$
\tan (n\theta(\zeta))=\frac{2\zeta}{\zeta^2-1}.
$$
Further, (\ref{rel8871}) simplifies to
\begin{equation}
\hspace{0.4cm}\makebox[250pt]{$\begin{array}{l}
r\left((4e^{-2u}y^2-4e^{-u}y\sin v+\sin^2 v-\cos^2 v)(4y^2\sin v+4ye^u\cos^2 v-e^{2u}\sin v)\right.\\
\vspace{-0.3cm}\\
\hspace{3.5cm}\left.-2\cos^2 v(2e^{-u}y-\sin v)(4y^2-4ye^u\sin v-e^{2u})\right)=0,
\end{array}$}\label{ids7772}
\end{equation}
where we utilized
$$
\tan n\theta=\frac{2\cos v(2e^{-u}y-\sin v)}{4e^{-2u}y^2-4e^{-u}y\sin v+\sin^2 v-\cos^2 v}.
$$
Identity (\ref{ids7772}) immediately implies $r=0$, hence $q=r=s=0$, which contradicts our earlier observation that at least one of $q,r,s$ must be nonzero.
\vspace{-0.1cm}\\

{\bf Case 5.4.} Suppose finally that $\alpha=-2/n$. In this case formulas (\ref{formtan1}), (\ref{coeffmathcal1}) yield $q=r$, $r\in\R$, $s=r/n$, where in (\ref{formtan1}) we took into account that for $v=0$ one has
$$
\tan (n\theta(\zeta))=-\frac{2\zeta}{\zeta^2-1}.
$$
Further, (\ref{rel8871}) simplifies to
\begin{equation}
\hspace{0.3cm}\makebox[250pt]{$\begin{array}{l}
\hspace{0.3cm} r\left((4e^{-2u}y^2-4e^{-u}y\sin v+\sin^2 v-\cos^2 v)(4y^2\sin v-4ye^u\cos^2 v\right.\\
\vspace{-0.3cm}\\
\hspace{0cm}
\left. -8ye^u\sin^2 v +3e^{2u}\sin v)+2\cos^2 v(2e^{-u}y-\sin v)(4y^2-4ye^u\sin v-e^{2u})\right)=0,
\end{array}$}\label{ids7773}
\end{equation}
where we utilized
$$
\tan n\theta=-\frac{2\cos v(2e^{-u}y-\sin v)}{4e^{-2u}y^2-4e^{-u}y\sin v+\sin^2 v-\cos^2 v}.
$$
Identity (\ref{ids7773}) immediately yields $r=0$, hence $q=r=s=0$, which again contradicts our earlier observation that at least one of $q,r,s$ must be nonzero.
\vspace{-0.1cm}\\

We now summarize our findings as follows:

\begin{theorem}\label{examplegeneral} If the isotropy subalgebras of the points in the local $\SL(2,\C)_{\R}$-orbit $\Sigma$ are conjugate to $\mathfrak{c}_{\R}$, only Cases {\rm 5.1}, {\rm 5.2} contain examples of CR-hypersur\-fa\-ces with $\dim\mathfrak{s}=11$. In fact, the examples in Theorem {\rm \ref{main2}} arise from each of the two cases by collecting the linear term in the expression for the defining function of $M$ with respect to $\tau$ and  and the terms of the lowest possible orders with respect to $t$ in the expressions for the corresponding vector fields.
\end{theorem}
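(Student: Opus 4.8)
The statement recapitulates the analysis carried out in Cases~1--5 above, so the plan is to check that this analysis is exhaustive and to read off its conclusion. First I would recall the setup: by assumption the isotropy subalgebra of $p_0\in\Sigma$ under the $\SL(2,\C)_\R$-action is conjugate to $\mathfrak{c}_\R$, so by Lemma~\ref{cartancomplexorbit} the orbit $\Sigma$ is a complex surface and, being $4$-dimensional, coincides with the local $P$-orbit of $p_0$. After conjugation the isotropy may be taken to be $\tilde{\mathfrak c}_\R=\langle Y_0,Y_0^i\rangle$, and choosing holomorphic coordinates $(z,w,t)$ on $\mathcal{M}$ with $\Sigma=\{t=0\}$ and $\vf(Y_1),\vf(Y_2)$ as in (\ref{X1X2}) puts $\vf(Y_0)$ in the normal form (\ref{rep0}), governed by holomorphic parameters $f_1,f_2,f_3$ with $f_1(0)=f_3(0)=0$, $f_2(0)=1$; moreover the induced invariant complex structure on $\Sigma$ is, up to conjugation, either $J_1^{+}$ or $J_2^{+}$ from (\ref{complstructures}).

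Next I would observe that the five cases treated above exhaust all possibilities: they are organised according to whether $f_3$ vanishes identically, does not vanish identically but has $f_3'(0)=0$, or has $f_3'(0)\ne0$, combined with the two choices of complex structure (the middle alternative being handled uniformly in Case~1). In Cases~1--4 a contradiction is reached, so none of them yields an example. In Cases~1--3 one uses the commutator identities $[R,Y_j]=\dots$, $[R,V_1]=V_1$ together with the shape of $\vf(Y_1^{i})$ dictated by the chosen complex structure to force either $\vf(V_1)=0$, impossible by effectivity of the $\mathfrak{p}_2$-action, or $M$ Levi-flat, impossible since $M$ is holomorphically nondegenerate; in Case~4 one forms the $6\times 6$ matrix $A(p)$ built from the six fundamental fields of $\mathfrak{sl}(2,\C)_\R$ at a nearby point $p\notin\Sigma$ and shows $\det A(p)\ne0$, which would force a $6$-dimensional orbit inside the $5$-dimensional $M$.

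For Case~5 ($\alpha:=f_3'(0)\ne0$, structure $J_2^{+}$) I would carry out the chain of normalisations already indicated: determinant computations for suitable $6\times 6$ matrices force $\Im(\check c/\alpha)=-1$ and $c_1'(0)=\mu\alpha$ with $\mu\in\R$; then $[R,V_1]=V_1$ and (\ref{rel3}) force $\beta:=c_1'(0)=1/n$ for some $n\in\N$ (otherwise $\vf(V_1)=0$), whence $\alpha\in\R$. Writing the equation of $M$ near the origin as $\sigma=\tau\psi(x,y,u,v)+o(\tau)$ and imposing tangency of $2\Re(\vf(Y_j))$, $j=0,1,2$, and of $2\Re(\vf(Y_1^{i}))$ yields the first-order PDE system (\ref{syspsi}), solved explicitly as (\ref{rel77}); then imposing tangency of $2\Re(\vf(V_1))$ and isolating the order $n+1$ terms in $\tau$ gives the rational constraint (\ref{formtan1}). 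Comparing the range of the function $\theta(\zeta)$, which equals $\tan^{-1}\psi$ restricted to $v=0$ and is a half-line precisely when $\rho\ne0$, with the zero/pole count of $\tan(n\theta)$ forces $\rho=0$ and pins $\alpha$ down to one of $\pm1/n,\pm2/n$; the four resulting sub-cases are exhaustive. The sub-cases $\alpha=\pm2/n$ (Cases~5.3, 5.4) are eliminated because the resulting algebraic identity collapses to $r=0$, hence $q=r=s=0$, contradicting the fact that at least one of $q,r,s$ is nonzero; the sub-cases $\alpha=\pm1/n$ (Cases~5.1, 5.2) survive, and recording the lowest-order terms of all the vector fields together with the linear-in-$\tau$ part of the defining function produces the hypersurfaces (\ref{eqexamp}) and (\ref{eqexamp1}), the latter CR-equivalent to the former via $z\mapsto-z+e^w-1$ (with $w,t$ unchanged). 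Theorem~\ref{main2} then certifies that these $M_n$ indeed realise $\dim\mathfrak{s}=11$, which completes the proof.

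The genuinely delicate step, which I expect to be the main obstacle, is the one terminating Case~5: matching the explicit transcendental form of $\psi$ (through $\theta(\zeta)$) against the purely rational constraint (\ref{formtan1}) is what simultaneously excludes $\rho\ne0$ and confines $\alpha$ to the short list $\{\pm1/n,\pm2/n\}$; everything else is lengthy but mechanical bookkeeping of commutator relations and determinants.
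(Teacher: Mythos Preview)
Your proposal is correct and follows essentially the same approach as the paper: the theorem is stated there explicitly as a summary of the preceding case analysis (Cases~1--5 and Sub-cases~5.1--5.4), and your write-up faithfully recapitulates that analysis, including the key transcendental-versus-rational comparison that forces $\rho=0$ and $\alpha\in\{\pm 1/n,\pm 2/n\}$. The only minor quibble is the word ``precisely'' in your description of when the range of $\theta(\zeta)$ is a half-line; all that is actually needed (and all the paper uses) is the one-sided implication that $\rho\ne 0$ makes the range unbounded.
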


\begin{remark}\label{higherdegreeexamples} It would be interesting to see whether one can find more examples of manifolds with $\dim\mathfrak{s}=11$ (or even fully classify all such manifolds) by allowing higher-order terms in the expressions for $\varphi$ and the vector fields obtained in Cases 5.1 and 5.2.
\end{remark}

\subsection{Other orbit types}\label{caseother}

In accordance with Proposition \ref{isotropysubalgebras} and Lemma \ref{cartancomplexorbit}, we have also considered two other cases: (i) when the isotropy subalgebras of the points in the local $\SL(2,\C)_{\R}$-orbit $\Sigma$ are conjugate to $\mathfrak{n}_{\R}$ and $\Sigma$ is not a complex surface, and (ii) when the isotropy subalgebras of the points in $\Sigma$ are conjugate to $\mathfrak{b}_{\R}$ and $\Sigma$ is a totally real surface. Both these situations lead to contradictions similar to the ones we encountered in Cases 1--4 above, thus no examples of manifolds with $\dim\mathfrak{s}=11$ arise from them. We do not provide details of our calculations here. To complete the picture, one also needs to look at two more possibilities: (iii) when the isotropy subalgebras of the points in $\Sigma$ are conjugate to $\mathfrak{n}_{\R}$ and $\Sigma$ is a complex surface, and (iv) when the isotropy subalgebras of the points in $\Sigma$ are conjugate to $\mathfrak{b}_{\R}$  and $\Sigma$ is a complex curve. A quick look at (iii) shows that this case can be analyzed analogously to the case of Cartan subalgebras in Section \ref{caseCartan} although we did not perform all the required calculations in detail. In contrast, (iv) appears to be quite hard as in this situation only one vector field can be brought to a canonical form, say $\partial/\partial z$, and the two remaining variables leave significant freedom for reducing the generality of other vector fields (cf.~\cite[Chapter I, \S 4--5]{IY}). We did not attempt to analyze this case.

In particular, we have:  

\begin{proposition}\label{complexstructure} Let $M$ be a CR-hypersurface with $\mathfrak{s}=\mathfrak{p}_2$ as in Theorem {\rm \ref{main1}}. Further, let  $O$ be a local $P$-orbit of positive codimension in $M$, where the group $P=(\SL(2,\C)_{\R}\times\RR)\ltimes\R^4$ has $\mathfrak{p}_2$ as its Lie algebra {\rm (}such an orbit always exists if $M$ is simply-connected{\rm )}. Then $O$ contains a complex submanifold. More precisely, fix a point $p_0\in O$ and consider the local orbit $\Sigma$ of $p_0$ under the induced local action of $\SL(2,\C)_{\R}\subset P$. Then $\Sigma$ is either a complex surface or a complex curve. In the former case the isotropy subalgebra of $p_0$ in $\mathfrak{sl}(2,\C)_{\R}$ is conjugate to one of $\mathfrak{c}_{\R}$, $\mathfrak{n}_{\R}$, and in the latter case to $\mathfrak{b}_{\R}$.
\end{proposition}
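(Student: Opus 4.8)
The plan is to read the statement off the classification results established in this section; no essentially new computation is needed. Write $\h\subset\mathfrak{sl}(2,\C)_\R$ for the isotropy subalgebra of $p_0$ in the local $\SL(2,\C)_\R$-orbit $\Sigma$. Since $\SL(2,\C)_\R\subset P$ we have $\Sigma\subseteq O$, hence $\dim\Sigma\le\dim O\le 4$ and $\dim\h\ge 2$. By Proposition \ref{isotropysubalgebras}, $\h$ is then conjugate to one of $\mathfrak{b}_\R$, $\mathfrak{c}_\R$, $\mathfrak{n}_\R$, so in particular $\dim\Sigma\in\{2,4\}$.

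I would first settle the parenthetical existence assertion, which is the observation recorded at the beginning of Section \ref{fourdimorbit}: if $M$ is simply connected and \emph{every} local $P$-orbit were open, then $\mathfrak{s}=\mathfrak{p}_2$ would act locally transitively, making $M$ locally homogeneous; together with Theorem \ref{main1} (sphericity with Levi form of signature $(1,1)$ on a dense open subset) this forces $M$ to be spherical with Levi form of signature $(1,1)$ everywhere, and then, exactly as for $\mathcal{U}^{\pm}$ in the proof of Lemma \ref{analyticity}, simple connectedness yields $\hol(M)=\mathfrak{su}(2,2)$, contradicting $\dim\mathfrak{s}=11$. Hence a local $P$-orbit $O$ of positive codimension exists, and once the structural assertion is proved the inclusion $\Sigma\subseteq O$ exhibits a complex submanifold of $O$.

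For the structural part, Lemma \ref{cartancomplexorbit} already gives: if $\h\cong\mathfrak{c}_\R$ then $\Sigma$ is a complex surface, and if $\h\cong\mathfrak{b}_\R$ then $\Sigma$ is either a complex curve or a totally real surface. It therefore remains to eliminate two residual configurations: (i) $\h\cong\mathfrak{b}_\R$ with $\Sigma$ a totally real surface, and (ii) $\h\cong\mathfrak{n}_\R$ with $\Sigma$ not a complex surface. Both are disposed of in Section \ref{caseother}, where each is shown to be unrealizable for a CR-hypersurface with $\mathfrak{s}=\mathfrak{p}_2$ by arguments parallel to Cases 1--4 of Section \ref{caseCartan}: one first determines the $\h$-invariant almost complex structures on $T_{p_0}(\Sigma)\cong\mathfrak{sl}(2,\C)_\R/\h$ from the kernel/image flags of the generators, then picks adapted holomorphic coordinates $(z,w,t)$ near $M$ normalizing some of the fundamental vector fields as in (\ref{X1X2}), (\ref{rep0}), and finally extracts from the tangency of their real parts to $M$ an overdetermined system for the defining function whose only possible outcome is that $M$ is Levi-flat, that the $\mathfrak{p}_2$-action is not effective, or that some $\SL(2,\C)_\R$-orbit is $6$-dimensional --- all impossible. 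Combining everything, $\Sigma$ is a complex surface precisely when $\h$ is conjugate to $\mathfrak{c}_\R$ or $\mathfrak{n}_\R$, and a complex curve precisely when $\h\cong\mathfrak{b}_\R$, which is the assertion.

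The main obstacle is the elimination of configurations (i) and (ii), i.e. the content of Section \ref{caseother}. Unlike the Cartan case of Section \ref{caseCartan}, where the normalizations (\ref{X1X2})--(\ref{rep0}) leave essentially no freedom, the weaker isotropy here pins down fewer of the generating vector fields, so additional functional parameters survive, the commutator bookkeeping lengthens, and the differential equations for the defining function become correspondingly more delicate --- which is exactly why these two cases are deferred. In a compact write-up of the proposition I would simply cite the conclusions of Section \ref{caseother}; a self-contained proof would reproduce those calculations.
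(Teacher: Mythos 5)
Your proposal is correct and follows essentially the same route as the paper: the proposition is obtained by combining Proposition \ref{isotropysubalgebras}, Lemma \ref{cartancomplexorbit}, and the elimination in Section \ref{caseother} of the two residual configurations ($\mathfrak{b}_{\R}$ with totally real $\Sigma$, and $\mathfrak{n}_{\R}$ with non-complex $\Sigma$), exactly as you assemble it, and the paper likewise defers the detailed calculations for those two cases. Your handling of the parenthetical existence claim for simply-connected $M$ also matches the paper's argument at the start of Section \ref{fourdimorbit}.
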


\section{Examples of CR-hypersurfaces with $\dim\mathfrak{s}<11$}\label{moreexamples}
\setcounter{equation}{0}

In this section we show that all integers between 0 and 10 are also realizable as symmetry dimensions thus establishing that no more gaps occur for the dimension of the symmetry algebra of a real-analytic connected holomorphically nondegenerate {\rm 5}-dimensional CR-hypersurface.

\begin{theorem}\label{realizableimensions} For a real-analytic connected holomorphically nondegenerate {\rm 5}-dimensional CR-hypersurface the possible symmetry dimensions are {\rm 0--11} and {\rm 15}, all of them are realizable, and for each of these dimensions there is a simply-connected realization.
\end{theorem}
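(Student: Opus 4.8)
The plan is to establish the two halves of the statement separately: that $\dim\mathfrak{s}$ cannot take any value outside $\{0,1,\dots,11\}\cup\{15\}$, and that each value in this set is realized by a simply-connected example.

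The first half is immediate: Theorem \ref{main1} forces $\dim\mathfrak{s}\in\{0,\dots,11\}\cup\{15\}$, and in particular excludes $12$, $13$, $14$. So the content lies entirely in realizability, and here I would exhibit an explicit hypersurface for each $d\in\{0,\dots,11\}\cup\{15\}$. Each model I use will be presented as the graph of a real-analytic function over a convex open subset of $\R^5$, hence diffeomorphic to a ball and automatically simply connected, so I would not comment on the topology further. The two extreme values are already available: $d=15$ is realized by the quadric $Q_1$ (or $Q_2$) of (\ref{QK}), which is spherical with $\dim\mathfrak{s}=15$; and $d=11$ by any of the hypersurfaces $M_n$ from Theorem \ref{main2}, which by that theorem is holomorphically nondegenerate with $\dim\mathfrak{s}=\dim\mathfrak{p}_2=11$ and which is exhibited in (\ref{eqexamp}) as a graph over $\R^4\times(-\pi/2,\pi/2)$.

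For $0\le d\le 10$ the plan is to write down explicit defining equations and compute $\dim\mathfrak{s}$ directly, by solving the determining system for the holomorphic vector fields whose real parts are tangent to $M$ (cf.~the computations in \cite{Kr}), with the help of the {\tt DifferentialGeometry} package. I would split these into three groups. (a) For $0\le d\le 8$, use Levi-nondegenerate hypersurfaces, which are automatically holomorphically nondegenerate and, being non-spherical, satisfy $\dim\mathfrak{s}\le 8$ by \cite{Kr}; concretely, perturbations $\Im w=\Re(z_1)^2-\Re(z_2)^2+P$ of the hyperquadric, with $P$ vanishing to order $\ge 3$ at the origin chosen from a short list of polynomials in $z_1,z_2,\bar z_1,\bar z_2,\Re w$: a generic $P$ genuinely involving $\Re w$ gives $d=0$; a generic rigid $P$ gives $d=1$ (only $\partial/\partial(\Re w)$ survives); rigid $P$ invariant under increasingly large groups of rotations and scalings in the $z$-variables give the intermediate values; and the submaximal models of \cite{Kr} give $d=7$ and $d=8$. (b) For $d=10$, use a uniformly Levi-degenerate, $2$-nondegenerate homogeneous hypersurface --- for instance the tube $\{\Re(z_1)^2=\Re(z_2)^2+\Re(z_3)^2\}+i\R^3$ over the light cone --- whose symmetry algebra attains the bound $\dim\mathfrak{s}=10$ of \cite[Corollary 5.4]{IZ}. (c) For $d=9$, use a suitable perturbation of the model in (b) that stays $2$-nondegenerate on a dense open subset and loses exactly one symmetry.

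The bookkeeping is routine; the real obstacles, as I see them, are three. First, for the small values $d=0,1,2$ one must prove the \emph{absence} of extra symmetries, not merely display the expected ones --- the efficient way is to put the Levi-nondegenerate model in Chern--Moser normal form, read off its symmetry algebra from the normal-form remainder, and check that a generic remainder kills every candidate symmetry. Second, and most delicately, the polynomial $P$ (and the perturbation in (c)) must be chosen so that $\dim\mathfrak{s}$ lands on the \emph{intended} integer, neither more nor less, and so that holomorphic nondegeneracy is not accidentally destroyed; this is exactly why one works with a parametrized family and takes a generic member rather than a lucky single equation. Third, one must confirm that the $d=9,10$ models admit graph presentations over convex domains so that the simple-connectivity clause is met, which is clear for the light-cone tube and its perturbations. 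Granting these points, the theorem follows by assembling the examples.
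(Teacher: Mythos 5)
Your reduction to realizability via Theorem \ref{main1}, and your treatment of $d=15$, $d=11$, of $d=10$ via the light-cone tube, and of $d=0,\dots,8$ via (non-)rigid perturbations of the hyperquadric, are all workable and run parallel to the paper's proof, which instead lists explicit homogeneous polynomial models $\sigma=P(z,\bar z,w,\bar w)$ of degree $3$ or $4$ for $d=2,\dots,8,10$ (with dimensions computed via \cite[Theorem 1.1]{KMZ}) and, exactly as you anticipate, is forced off rigid polynomial models for $d=0,1$ because any such model carries at least the $\tau$-translation and a dilation. Two small corrections to your bookkeeping: the full tube over the future light cone is homotopy equivalent to a circle, so for the simple-connectivity clause you must pass to a graph piece of it (the paper's model $P=z\bar w^2+w^2\bar z$ is already a global graph over $\R^5$); and your genericity claims for $d=0,1$ still require the normal-form verification you yourself flag, though the paper leaves this at the same level of "generic".

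The genuine gap is $d=9$. Your item (c) --- ``a suitable perturbation of the light-cone tube that stays $2$-nondegenerate on a dense open subset and loses exactly one symmetry'' --- is not an example but a hope, and nothing you cite guarantees that the value $9$ is attained at all within the uniformly Levi-degenerate $2$-nondegenerate class: the only control available there is the upper bound $10$ from \cite[Corollary 5.4]{IZ}, and $9$ could perfectly well lie in a gap for that class. Note also that $9$ cannot be reached by your scheme (a), since the presence of a Levi-nondegenerate non-spherical point forces $\dim\mathfrak{s}\le 8$ by \cite{Kr}; so a third mechanism is genuinely needed. The paper supplies it: the hypersurface $\sigma=|z|^2+|w|^4$ (and, non-simply-connectedly, the lens spaces $S^5/\Z_m$) is spherical on the dense open set of its Levi-nondegenerate points, but its degeneracy locus cuts the globally defined symmetry algebra down to an explicitly listed $9$-dimensional subalgebra of $\mathfrak{su}(1,3)$. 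Without an example of this ``spherical but globally degenerate'' type (or a verified $2$-nondegenerate one), your proof of the theorem is incomplete at $d=9$.
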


\begin{proof} We only need to realize symmetry dimensions 0--10. There are several ways of doing that and many examples are well-known (see, e.g., \cite{IZ}, \cite{Kr} and references therein for a discussion of dimensions 7, 8, 10). In the proof below, we choose a uniform approach to realizing dimensions 2--8, 10 by considering holomorphically nondegenerate hypersurfaces of the form
\begin{equation}
\sigma=P(z,\bar{z},w,\bar{w}),\label{polynom}
\end{equation}
where, as before, $z,w,t$ are complex coordinates in $\C^3$ with $t=\tau+i\sigma$, and $P$ a homogeneous polynomial without pluriharmonic terms of degree $d\ge 2$. Explicit formulas for the components of the symmetry algebra of hypersurface (\ref{polynom}) were given in \cite[Theorem 1.1]{KMZ}, and utilizing {\tt Maple} one can compute their dimensions for particular examples. 

We experimented with degrees $d=3, 4$ and obtained the following examples realizing symmetry dimensions 2--8, 10:
$$
\begin{array}{ll}
\dim\mathfrak{s}=2, & P=|z|^2(z+\bar{z})+|w|^2(-iz+w+i\bar{z}+\bar{w})+z\bar{w}^2+w^2\bar{z},\\
\vspace{-0.1cm}\\
\dim\mathfrak{s}=3, & P=|z|^2(w+\bar{w})+iz\bar{w}^2-iw^2\bar{z},\\
\vspace{-0.1cm}\\
\dim\mathfrak{s}=4, & P=|z|^2(z+\bar{z})+|w|^2(w+\bar{w}),\\
\vspace{-0.1cm}\\
\dim\mathfrak{s}=5, & P=z^2\bar{w}^2+w^2\bar{z}^2,\\
\vspace{-0.1cm}\\
\dim\mathfrak{s}=6, & P=|z|^2(w^2+\bar{w}^2),\\
\vspace{-0.1cm}\\
\dim\mathfrak{s}=7, & P=(|z|^2+|w|^2)^2,\\
\vspace{-0.1cm}\\
\dim\mathfrak{s}=8, & P=|z|^2(w+\bar{w}),\\
\vspace{-0.1cm}\\
\dim\mathfrak{s}=10, & P=z\bar{w}^2+w^2\bar{z}.
\end{array}
$$
Notice that for each of the polynomials listed above the corresponding hypersurface of the form (\ref{polynom}) is holomorphically nondegenerate since it has points of Levi-nondegeneracy.

Next, for the sphere $S^5\subset\CC^3$ consider the lens space ${\mathcal L}_m:=S^5/\ZZ_m$, with $m>1$, where $\Z_m$ acts on $\CC^3$ by complex multiplication. It is not hard to show (see, e.g., \cite[p.~37]{I1}) that the Lie group of CR-automorphisms of ${\mathcal L}_m$ is naturally  isomorphic to $\U(3)/\ZZ_m$ and thus has dimension 9. Clearly, its Lie algebra consists of complete vector fields in $\mathfrak{s}=\hol({\mathcal L}_m)$. On the other hand, as ${\mathcal L}_m$ is compact, every vector field in $\mathfrak{s}$ is complete, thus every ${\mathcal L}_m$ is an example of a Levi-nondegenerate manifold with symmetry dimension 9.

There also exist simply-connected examples with symmetry dimension 9. For instance, utilizing {\tt Maple}, one can compute the symmetry algebra of the hypersurface
\begin{equation}
\sigma=|z|^2+|w|^4.\label{polynom1}
\end{equation}
The algebra is spanned by the following 9 vector fields:
$$
\begin{array}{l}
\displaystyle iz\frac{\partial}{\partial z},\,\,iw\frac{\partial}{\partial w},\,\,\frac{\partial}{\partial t},\\
\vspace{-0.1cm}\\
\displaystyle \frac{z}{2}\frac{\partial}{\partial z}+\frac{w}{4}\frac{\partial}{\partial w}+t\frac{\partial}{\partial t},\\
\vspace{-0.1cm}\\
\displaystyle {zt}\frac{\partial}{\partial z}+\frac{wt}{2}\frac{\partial}{\partial w}+t^2\frac{\partial}{\partial t},\\
\vspace{-0.1cm}\\
\displaystyle \frac{i}{2}\frac{\partial}{\partial z}+z\frac{\partial}{\partial t},\,\,\frac{1}{2}\frac{\partial}{\partial z}+iz\frac{\partial}{\partial t},\\
\vspace{-0.1cm}\\
\displaystyle\left(z^2+\frac{it}{2}\right)\frac{\partial}{\partial z}+\frac{zw}{2}\frac{\partial}{\partial w}+zt\frac{\partial}{\partial t},\\
\vspace{-0.1cm}\\
\displaystyle\left(iz^2+\frac{t}{2}\right)\frac{\partial}{\partial z}+\frac{izw}{2}\frac{\partial}{\partial w}+izt\frac{\partial}{\partial t}.
\end{array}
$$
We remark that the symmetry algebras of hypersurfaces of the form (\ref{polynom}) where $P$ is a weighted homogeneous polynomial (e.g., as in (\ref{polynom1})) were studied in \cite{KMZ}, \cite{KM}.

Next, notice that the symmetry dimension of a hypersurface of the form (\ref{polynom}) is always at least $2$ due to the presence of the translations
$$
z\mapsto z,\quad w\mapsto w,\quad t\mapsto t+a,\,a\in\RR
$$
and the dilations
$$
z\mapsto b z,\quad w\mapsto b w,\quad t\mapsto b^dt,\, b>0.
$$
Therefore, to realize 0 and 1 as symmetry dimensions one has to utilize equations of a different kind. Indeed, a generic hypersurface of the form $\sigma=F(|z|^2,w,\bar{w},\tau)$ has 1-dimensional symmetry algebra, and a generic hypersurface $\sigma=F(z,\bar{z},w,\bar{w},\tau)$ has no symmetries at all. \end{proof}


\begin{thebibliography}{ABC}

\bibitem[AF]{AF} Andreotti, A. and Fredricks, G., Embeddability of real analytic Cauchy-Riemann manifolds, {\it Ann. Scuola Norm. Sup. Pisa Cl. Sci. {\rm (}4{\rm )}} \textbf{6} (1979), 285--304.

\bibitem[BER]{BER} Baouendi, M. S., Ebenfelt, P. and Rothschild, L. P., {\it Real Submanifolds in Complex Space and Their Mappings}, Princeton Mathematical Series 47, Princeton University Press, Princeton, NJ, 1999.

\bibitem[B]{B} Beloshapka, V. K., Real submanifolds in complex space: polynomial models, automorphisms, and classification problems, {\it  Russian Math. Surveys} \textbf{57} (2002), 1--41.

\bibitem[BS]{BS} Burns D. and Shnider, S., Real hypersurfaces in complex manifolds, In: {\it Several Complex Variables, Proc. Symp. Pure Math. XXX, Part 2, Williams Coll, Williamstown, Mass. 1975}, American Mathematical Society, 1977, pp. 141--168.

\bibitem[C]{C} Cartan, \' E., Sur la g\'eometrie pseudo-conforme des hypersurfaces de l'espace de deux variables complexes: I, {\it Ann. Math. Pura Appl.} \textbf{11} (1933), 17--90; II, {\it Ann. Scuola Norm. Sup. Pisa} \textbf{1} (1932), 333--354.

\bibitem[CSc]{CSc} Cap, A. and Schichl, H., Parabolic geometries and canonical Cartan connections, {\it Hokkaido Math. J. }\textbf{29} (2000), 453--505.

\bibitem[CSl]{CSl} Cap, A. and Slov\'ak, J., {\it Parabolic Geometries. I. Background and General Theory}, Mathematical Surveys and Monographs 154, American Mathematical Society, Providence, RI, 2009.

\bibitem[CM]{CM} Chern, S. S. and Moser, J. K., Real hypersurfaces in complex manifolds, {\it Acta Math.} \textbf{133} (1974), 219--271; erratum, {\it Acta Math.} \textbf{150} (1983), 297.

\bibitem[Eb]{Eb} Ebenfelt, P., Nondegeneracy conditions and normal forms for real hypersurfaces in complex space, Journ\'ees \lq\lq\'Equations aux D\'eriv\'ees Partielles\rq\rq\, (Saint-Jean-de-Monts, 1997), Exp. No. VII, 15 pp., \'Ecole Polytech., Palaiseau, 1997.

\bibitem[Eg1]{Eg1} Egorov, I. P., Maximally mobile Einstein spaces of nonconstant curvature, {\it Sov. Math., Dokl.} \textbf{3} (1962), 1124--1127.

\bibitem[Eg2]{Eg2} Egorov, I. P., {\it Geometry}, Prosveshchenie, Moscow, 1979.

\bibitem[EI]{EI} Ezhov, V. and Isaev, A., On the dimension of the stability group for a Levi non-degenerate hypersurface, {\it Illinois J. Math.} \textbf{49} (2005), 1155--1169; erratum, {\it Illinois J. Math.} \textbf{51} (2007), 1035--1036.

\bibitem[F]{F} Fubini, G., Sugli spazii che ammettono un gruppo continou di movementi, {\it Ann. Mat. Pura Appl. {\rm (}3{\rm )}} \textbf{8} (1902), 39--81.

\bibitem[GS]{GS} Guillemin, V.~W. and Sternberg, S., Remarks on a paper of Hermann, {\it Trans. Amer. Math. Soc.} \textbf{130} (1968), 110--116.

\bibitem[IY]{IY} Iliashenko, Y. and Yakovenko, S., {\it Lectures on Analytic Differential Equations}, Graduate Studies in Mathematics 86, American Mathematical Society, Providence, RI, 2008.

\bibitem[I1]{I1} Isaev, A. V., {\it Lectures on the Automorphism Groups of Kobayashi-Hyperbolic Manifolds}, Lecture Notes in Mathematics 1902, Springer, Berlin, 2007.

\bibitem[I2]{I2} Isaev, A. V., Proper actions of high-dimensional groups on complex manifolds, {\it Bull. Math. Sci.} \textbf{5} (2015), 251--285.

\bibitem[IK]{IK} Isaev, A. and Kruglikov, B., A short proof of the Dimension Conjecture for real hypersurfaces in $\C^2$, {\it Proc. Amer. Math. Soc.} \textbf{144} (2016), 4395--4399.

\bibitem[IZ]{IZ} Isaev, A. and Zaitsev, D., Reduction of five-dimensional uniformly Levi degenerate CR structures to absolute parallelisms, {\it J. Geom. Anal.} {\bf 23} (2013), 1571--1605.

\bibitem[Ko]{Ko} Kobayashi, S., {\it Transformation Groups in Differential Geometry}, Ergebnisse der Mathematik und ihrer Grenzgebiete 70, Springer-Verlag, New York-Heidelberg, 1972.

\bibitem[KN]{KN} Kobayashi, S. and Nagano, T.,  Riemannian manifolds with abundant isometries, In: {\it Differential Geometry in Honor of Kentaro Yano}, Kinokuniya, Tokyo, 1972, pp.~195--219.

\bibitem[KL]{KL} Kol\'a\v r, M. and Lamel, B., Holomorphic equivalence and nonlinear symmetries of ruled hypersurfaces in $\CC^2$, {\it J. Geom. Anal.} \textbf{25} (2015), 1240--1281.

\bibitem[KMZ]{KMZ} Kol\'a\v r, M., Meylan, F. and Zaitsev, D., Chern-Moser operators and polynomial models in CR geometry, {\it Adv. Math.} \textbf{263} (2014), 321--356.

\bibitem[KM]{KM} Kol\'a\v r, M. and Meylan, F., Nonlinear CR automorphisms of Levi degenerate hypersurfaces and a new gap phenomenon, preprint, https://arxiv.org/abs/1703.07123.

\bibitem[KS]{KS} Kossovskiy, I. and Shafikov, R., Analytic differential equations and spherical real hypersurfaces, {\it J. Diff. Geom.} \textbf{102} (2016), 67--126.

\bibitem[Kr]{Kr} Kruglikov, B., Submaximally symmetric CR-structures, {\it J. Geom. Anal.} 
\textbf{26} (2016), 3090--3097.

\bibitem[KT]{KT} Kruglikov, B. and The, D., The gap phenomenon in parabolic geometries, 
{\it J. reine angew. Math.} \textbf{723} (2017), 153--216.




\bibitem[Ma]{Ma} Mann, L. N., Gaps in the dimensions of transformation groups,  {\it Illinois J. Math.} \textbf{10} (1966), 532--546.

\bibitem[Mo]{Mo} Mostow, G.~D., On maximal subgroups of real Lie groups, {\it Ann. of Math. {\rm (}2{\rm )}} \textbf{74} (1961), 503--517.

\bibitem[PWZ]{PWZ} Patera, J., Winternitz, P. and Zassenhaus, H., Continuous subgroups of the fundamental groups of physics. I. General method and the Poincar\'e group, {\it J. Math. Phys.} \textbf{16} (1975), 1597--1614.

\bibitem[S]{S} Stanton, N., Infinitesimal CR automorphisms of rigid hypersurfaces., {\it Amer. J. Math.} \textbf{ 117} (1995), 141--167.

\bibitem[Ta1]{Ta1} Tanaka, N., On generalized graded Lie algebras and geometric structures I, {\it J. Math. Soc. Japan} \textbf{19} (1967), 215--254.

\bibitem[Ta2]{Ta2} Tanaka, N., On differential systems, graded Lie algebras and pseudo-groups, {\it J. Math. Kyoto Univ.} \textbf{10} (1970), 1--82.

\bibitem[Ta3]{Ta3} Tanaka, N., On non-degenerate real hypersurfaces, graded Lie algebras and Cartan connections, {\it Japan. J. Math.} \textbf{2} (1976), 131--190.

\bibitem[Tr]{Tr} Tresse, A., {\it D\'etermination des Invariants Ponctuels de L'\'equation Diff\'erentielle Ordinaire du Second Ordre $y'' = \omega (x,y,y')$}, Preisschr. F\"urstlich Jablon. Ges., Hirzel, Leipzig, 1896.

\bibitem[W]{W} Wakakuwa, H., On $n$-dimensional Riemannian spaces admitting some groups of motions of order less than $n(n-1)/2$, {\it T$\hat{\hspace{-0.03cm}\hbox{o}}$hoku Math. J. {\rm (}2{\rm )}} \textbf{6} (1954), 121--134.



\end{thebibliography}
\end{document}